\documentclass[12pt]{amsart}
\usepackage[english]{babel}
\usepackage{amssymb, amsthm, amsmath}
\usepackage{txfonts}
\usepackage{calrsfs}
\usepackage{mathrsfs}
\usepackage{amscd}
\usepackage{graphicx}
\usepackage[applemac]{inputenc} 
\usepackage[T1]{fontenc} 
\usepackage{hyperref} 
\usepackage{breakurl}
\clubpenalty = 10000 
\widowpenalty = 10000
\usepackage{cite}

\DeclareMathOperator*{\colim}{colim} 
\DeclareMathOperator*{\holim}{holim}

\DeclareMathOperator*{\hocolim}{hocolim}

\DeclareMathOperator{\Sing}{\textrm{Sing}_*}

\begin{document}
\pretolerance=4000 

\bibliographystyle{../hsiam}

\setcounter{tocdepth}{2} 
\setlength{\parindent} {0pt}

\newcommand{\cat}[1]{\mathscr{#1}} 
\newcommand{\ob}{\textrm{Ob}}
\newcommand{\mor}{\textrm{Mor}}
\newcommand{\id}{\mathbf 1} 

\newcommand{\sSet}{\mathbf{sSet}}
\newcommand{\Alg}{\mathbf{Alg}}
\newcommand{\sPr}{\mathbf{sPr}}
\newcommand{\dgCat}{\mathbf{dgCat}} 
\newcommand{\dgAlg}{\mathbf{dgAlg}} 
\newcommand{\Vect}{\mathbf{Vect}} 
\newcommand{\Ch}{\mathbf{Ch}}
\newcommand{\Chd}{{\mathbf{Ch}_{dg}}} 
\newcommand{\Chdp}{{\mathbf{Ch}_{pe}}} 
\newcommand{\Chp}{{\mathbf{Ch}_{pe}}} 
\newcommand{\uChp}{{\underline {\mathbf{Ch}}{}_{pe}}} 
\newcommand{\skMod}{\mathbf{skMod}} 
\newcommand{\Cat}{\mathbf{Cat}} 
\newcommand{\sCat}{\mathbf{sCat}}
\newcommand{\sModCat}{\mathbf{sModCat}}

\newcommand*\cocolon{
        \nobreak
        \mskip6mu plus1mu
        \mathpunct{}
        \nonscript
        \mkern-\thinmuskip
        {:}
        \mskip2mu
        \relax
}

\newcommand{\Hom}{\mbox{Hom}}
\newcommand{\uHom}{\mbox{\underline{Hom}}}
\newcommand{\Aut}{\mbox{Aut}}
\newcommand{\Out}{\mbox{Out}}
\newcommand{\End}{\mbox{End}}
\newcommand{\mods}{\textrm{-Mod}}
\newcommand{\Map}{\mbox{Map}}
\newcommand{\map}{\mbox{map}}
\newcommand{\Tor}{\mbox{Tor}}
\newcommand{\Ext}{\mbox{Ext}}
\newcommand{\Kos}{\mbox{Kos}} 
\newcommand{\Spec}{\mbox{Spec}}

\newcommand{\set}[1]{\mathbb{#1}}
\newcommand{\Q}{\mathbb{Q}}
\newcommand{\C}{\mathbb{C}}
\newcommand{\Z}{\mathbb{Z}}
\newcommand{\R}{\mathbb{R}}

\newcommand{\De}{\Delta}
\newcommand{\Ga}{\Gamma}
\newcommand{\Om}{\Omega}
\newcommand{\ep}{\epsilon}
\newcommand{\de}{\delta}
\newcommand{\La}{\Lambda}
\newcommand{\la}{\lambda}
\newcommand{\al}{\alpha}
\newcommand{\om}{\omega}

\renewcommand{\to}{\rightarrow}
\newcommand{\we}{\tilde \to} 
\newcommand{\cof}{\hookrightarrow}
\newcommand{\fib}{\twoheadrightarrow}
\newcommand{\acof}{\tilde \hookrightarrow}
\newcommand{\afib}{\tilde \twoheadrightarrow}

\newcommand{\oo}{\infty}
\newcommand{\di}{\mbox{d}} 

\newcommand{\op}{^{\textrm{op}}} 
\newcommand{\inv}{^{-1}} 
\newcommand{\nneg}{\tau_{\geq 0}} 

\newcommand{\Bold}{\boldsymbol}
\newcommand{\IF}{\textrm{if }}
\newcommand{\Res}{\textrm{Res}}
\newcommand{\comment}[1]{}

\theoremstyle{plain}
\newtheorem{thm}{Theorem}
\newtheorem*{thm*}{Theorem}
\newtheorem{cor}[thm]{Corollary}
\newtheorem{lemma}[thm]{Lemma}
\newtheorem{propn}[thm]{Proposition}
\newtheorem{conj}{Conjecture}

\makeatletter
\newtheorem*{rep@theorem}{\rep@title}
\newcommand{\newreptheorem}[2]{
\newenvironment{rep#1}[1]{
 \def\rep@title{#2 \ref{##1}}
 \begin{rep@theorem}}
 {\end{rep@theorem}}}
\makeatother

\newtheorem{theorem}{Theorem}
\newreptheorem{theorem}{Theorem}

\theoremstyle{definition}
\newtheorem*{defn}{Definition}
\newtheorem*{altdef}{Alternative Definition}
\newtheorem{eg}{Example}
\newtheorem*{conv}{Convention}
\newtheorem*{notation}{Notation}
\newtheorem*{fact}{Construction}
\newtheorem{qn}{Question}
\newtheorem{fqn}{Further Question}

\theoremstyle{remark}
\newtheorem{rk}{Remark}
\newtheorem*{claim}{Claim}

\title{Morita Cohomology}
\author{Julian V. S. Holstein}
\address{Christ's College and University of Cambridge}
\email{J.V.S.Holstein@dpmms.cam.ac.uk}

\begin{abstract}
We consider two categorifications of the cohomology of a topological space $X$ by taking coefficients in the category of differential graded categories. 
We consider both derived global sections of a constant presheaf and singular cohomology and find the resulting dg-categories are quasi-equivalent and moreover quasi-equivalent to representations in perfect complexes of chains on the loop space of $X$.
\end{abstract}

\maketitle

\section{Introduction}

In this paper we categorify the cohomology of topological spaces by considering coefficients in the category of differential graded categories. 

We begin with the calculation of derived global sections $R\Ga(X, \underline k)$, for $k$ a field and $X$ a locally contractible space.
By definition these are derived global sections of a constant sheaf. We categorify by considering the constant presheaf $\underline k$ not as a presheaf of rings, but as a presheaf of dg-categories with one object, where we equip dg-categories with the Morita model structure. In this model structure $k \simeq \Chp$, which is fundamental to our construction. 

Hence we call categorified cohomology \emph{Morita cohomology}. We write $\cat H^{M}(X)$ for the dg-category $R\Ga(X, \underline k)$ computed with this model structure. 

The following characterization as categorified \v Cech cohomology follows once we establish a local model structure on presheaves of dg-categories. 
\begin{reptheorem}{thm-cohomology-holim}
Given a good hypercover 
$\{U_i\}_{i \in I}$ of $X$ one can compute $\cat H^M(X) \simeq \holim_{i \in I\op} \Chp$. 
\end{reptheorem}

To categorify singular cohomology we recall the action of simplicial sets on dg-categories, $(K, \cat D) \mapsto \cat D^{K}$. If we fix the second variable this construction is well-known to give a Quillen adjunction from $\sSet$ to $\dgCat$.
Then for a  
topological space $X$ one defines categorified singular cohomology as $\cat Y(X) \coloneqq \Chp^{\Sing X}$. Here $\Chp$ denotes perfect chain complexes over an arbitrary commutative ring. We call $\cat Y(X)$ the category of  \emph{$\oo$-local systems} and we prove the following comparison theorem:
\begin{reptheorem}{thm-monodromy}
The category $\cat H^{M}(X)$ is quasi-equivalent to the category of $\oo$-local systems $\cat Y(X)$. 
\end{reptheorem}

Homotopy invariance and a Mayer--Vietoris theorem are easy to establish for $\oo$-local systems and hence for Morita cohomology.

The category $\cat Y(K)$ is closely related to the based loop space of $X$, as is shown by the next result. Here we denote by $\Chp^{C_{*}(\Om X)}$ the category of representations of the dg-algebra $C_{*}(\Om X)$ of chains on the space of (Moore) loops which have perfect underlying complex. 
\begin{reptheorem}{thm-loopreps}
If $X$ is a pointed and connected topological space, the category $\cat Y(X)$ is quasi-equivalent to $\Chp^{C_{*}(\Om X)}$.
\end{reptheorem}

We then establish a method of computing $\Chp^{C_{*}(\Om X)}$ if $X$ is a CW complex.

One can compute the Hochschild homology of Morita cohomology in several cases. For example for a simply connected space $HH_*(\cat H^M(X)) \simeq H^*(\cat L X)$, where the right hand side is cohomology of the free loop space. This follows from results available in the literature. 

Let us also mention here that a very explicit description of Morita cohomology is proved in the companion paper \cite{HolsteinB}:
\begin{thm*}
Let $X$ be a CW complex. 
Then the dg-category $\cat H^{M}(X)$ is quasi-equivalent to 
the dg-category of homotopy locally constant sheaves of perfect complexes.
\end{thm*}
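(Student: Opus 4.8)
The plan is to identify both dg-categories with the homotopy limit of the constant diagram $\Chp$ over a good hypercover, and thereby reduce the statement to Theorem \ref{thm-cohomology-holim}. Since that theorem already gives $\cat H^M(X) \simeq \holim_{i \in I\op} \Chp$, it suffices to prove that the dg-category $\cat{LC}(X)$ of homotopy locally constant sheaves of perfect complexes admits the same description. The conceptual input is that the constant stack $\underline{\Chp}$ satisfies hyperdescent, so that a sheaf which is locally quasi-isomorphic to a constant perfect complex is precisely a homotopy-coherent descent datum for this stack; taking such sections over a good hypercover, on which every object restricts to a genuinely constant complex, reproduces the homotopy limit.

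First I would fix a model for $\cat{LC}(X)$: inside a suitable dg-model (or local model structure) for sheaves of perfect complexes on $X$, take the full dg-subcategory spanned by objects $F$ for which there is an open cover $\{U_i\}$ with each $F|_{U_i}$ quasi-isomorphic to a constant sheaf on $U_i$ with perfect stalk. For a CW complex one may choose the cover to be a good hypercover $\{U_i\}_{i \in I}$, so that all finite intersections are contractible and sections of a locally constant sheaf over each piece are equivalent to the constant value. Restriction along this cover then defines a functor
\[
\cat{LC}(X) \longrightarrow \holim_{i \in I\op} \Chp,
\]
sending $F$ to its system of restrictions together with the higher gluing data on overlaps.

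The next step is to show this functor is a quasi-equivalence. For quasi-full-faithfulness I would use that for homotopy locally constant $F, G$ the internal derived Hom $R\uHom(F,G)$ is again homotopy locally constant, and that derived global sections compute the mapping complex; by hyperdescent $R\Ga(X, R\uHom(F,G))$ is the homotopy limit over the cover of the local mapping complexes, which matches the mapping complex in $\holim_{i \in I\op} \Chp$. Essential surjectivity is the assertion that every homotopy-coherent descent datum for the constant stack can be glued to a global homotopy locally constant sheaf, which is exactly effectivity of the homotopy limit computing $\cat{LC}(X)$.

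The hard part will be precisely this effectivity step: one must control the full tower of higher coherences in the homotopy limit and show that an abstract descent datum is realised by an honest sheaf, rather than merely up to a bounded stage. This requires that $\Chp$ (equivalently $k$ in the Morita model structure) be a hypercomplete stack for the topology generated by good covers of a CW complex, together with enough rigidity of the model to promote the restriction functor to an inverse equivalence. An alternative route that sidesteps sheaf-theoretic gluing is to pass through the comparison theorems of the main text: Theorem \ref{thm-monodromy} identifies $\cat H^M(X)$ with the $\oo$-local systems $\cat Y(X) = \Chp^{\Sing X}$, and one then proves a derived Riemann--Hilbert correspondence identifying $\cat Y(X)$ with $\cat{LC}(X)$, to the effect that representations of the fundamental $\oo$-groupoid of $X$ in perfect complexes are the same as homotopy locally constant sheaves. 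For a CW complex this correspondence is again local-to-global, and its hard direction is once more the effectivity of descent.
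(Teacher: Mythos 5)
This theorem is not proved in the present paper at all: it is quoted as a result of the companion paper \cite{HolsteinB}, so there is no in-paper argument to measure your proposal against. Judged on its own terms, your strategy --- identify $\cat H^{M}(X)$ with $\holim_{I\op}\Chp$ over a good hypercover via Theorem \ref{thm-cohomology-holim}, and then show that homotopy locally constant sheaves of perfect complexes admit the same description --- is the natural one and is consistent with everything the present paper sets up. But what you have written is a plan rather than a proof, and the missing piece is precisely the mathematical content of the theorem.

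Concretely, two steps are asserted but not established. First, the comparison functor itself: to send a homotopy locally constant sheaf $F$ to an object of $\holim_{i \in I\op}\Chp$ you must extract, functorially at the dg-level, the full tower of higher coherence data on all iterated overlaps; this is a rectification problem (a sheaf gives strictly compatible restrictions, while an object of the homotopy limit is a homotopy-coherent datum), and defining the functor already requires a concrete model for the homotopy limit of dg-categories. Second, and more seriously, essential surjectivity --- the effectivity of descent, i.e.\ that every homotopy-coherent descent datum for the constant stack $\uChp$ is realised by an honest homotopy locally constant sheaf --- is exactly the step you flag as ``the hard part'' and then do not carry out; your alternative route through Theorem \ref{thm-monodromy} and a derived Riemann--Hilbert correspondence defers to the same unproved gluing statement. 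The quasi-full-faithfulness step also leans on unverified claims (that $R\uHom(F,G)$ is again homotopy locally constant, that derived global sections of it compute the mapping complex of the sheaf category, and that this matches the mapping complex in the homotopy limit as computed by the homotopy end of Lemma \ref{lemma-htpy-end}). The paper's own remarks point to where the real work lives: the analogous unstable statement is proved in \cite{Toen08b} and \cite{Shulman08} by passing through the category of fibrations, and the linear, stable version is the subject of \cite{HolsteinB}. Until the effectivity step is supplied, the proposal does not constitute a proof.
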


All of these results are from the author's thesis.

\subsection*{Relation to other work}
We collect some ideas and results from the literature which are related to the constructions here and in \cite{HolsteinB}. This is not meant to be an exhaustive list.

Our results can be considered as a version of derived or higher non-abelian cohomology for topological spaces.  
Cohomology with higher categorical coefficients is considered for example in \cite{Simpson96a} where Simpson constructs the higher non-abelian cohomology stack of the de Rham stack of a smooth projective variety as a certain internal hom-space in geometric stacks.

Considering a simplicial set $K$ as a constant stack To\"en and Vezzosi construct a derived mapping stack $\Map(K, R\textrm{\underline{Perf}})$, where $R\textrm{\underline{Perf}}$ is the moduli stack of perfect complexes. This appears for example in \cite{Pantev11}.  
The construction of $\oo$-local systems in \mbox{Section \ref{ch-ils}} is a non-geometric version of this, which is already somewhat interesting and more tractable then the mapping stack. 

Morita cohomology is also closely related to the schematic homotopy type of \cite{Katzarkov09}.
In fact, Morita cohomology is equivalent to the category of perfect complexes on the schematic homotopy type, as follows readily from the characterization as homotopy locally constant sheaves. (This was pointed out to me by Jon Pridham.) For a different view of the schematic homotopy type see \cite{Pridham13}.

There is an analogue of the main theorem of \cite{HolsteinB} in the coherent setting:
Under suitable conditions global sections of the presheaf of dg-categories associated to the structure sheaf of a scheme can be computed as the category of perfect complexes of coherent sheaves. This appears for example as Theorem 2.8 in \cite{Simpson05} referring back to \cite{Simpson01}. 

The equivalence between $\oo$-local systems and homotopy locally constant sheaves that is obtained by combining this paper with\cite{HolsteinB} is a linear and stable version of results in \cite{Toen08b} or \cite{Shulman08}, where the corresponding result for presheaves of simplicial sets is proved by going via the category of fibrations. Another view on locally constant functors is given in \cite{Cisinski09}.

\subsection*{Outline} 

After briefly recalling some technical results and definitions in Section \ref{sect-prelim} we define a local model structure on presheaves of dg-categories and define their cohomology in \ref{sect-cohomodel}. We also characterize fibrant presheaves of dg-categories as hypersheaves. We then explicitly hypersheafify the constant presheaf in \ref{sect-sheafification} and use this to define Morita cohomology $\cat H^{M}(X)$ and compute it as the homotopy limit of a constant diagram with fiber $\Chp$ indexed by the distinct open sets of a good hypercover.

In Section \ref{ch-ils} we define a category of $\oo$-local systems from the cotensor action of simplicial sets on dg-categories, and show it es equivalent to $\cat H^{M}(X)$ in \ref{sect-infinity}.  We use this to identify $\cat H^{M}(X)$ with representations of chains on the loop space, in \ref{sect-loop-rep}. 
Section \ref{sect-cellular} is then concerned with providing an explicit method for computing the category of $\oo$-local systems. In \ref{sect-hochschild} we collect some results about finiteness of $\cat H^{M}(X)$ and show how to compute Hochschild (co)homology in some cases.

We conclude by computing some explicit examples in Section \ref{ch-eg}.

\subsection*{Acknowledgements}

I would like to thank Ian Grojnowski for many insightful comments and stimulating discussions.

\section{Preliminaries}\label{sect-prelim}

\subsection{Notation and conventions}
We assume the reader is familiar with the theory of model categories, but will try to recall all the less well-known facts about them that we use.

In any model category we write $Q$ for functorial cofibrant replacement and $R$ for functorial fibrant replacement.
We write mapping spaces (with values in $\sSet$) in a model category $\cat M$ as $\Map_{\cat M}(X,Y)$. 
All other enriched hom-spaces in a category $\cat D$ will be denoted as $\uHom_{\cat D}(X,Y)$. 
In particular we use this notation for differential graded hom-spaces, internal hom-spaces and hom-spaces of diagrams enriched over the target category. It should always be clear from context which category we enrich in. 

We will work over the (underived) commutative ground ring $k$. 
We assume characteristic 0 in order to freely use differential graded constructions. Most of Section \ref{ch-morita} will moreover assume $k$ is a field.

$\Ch = \Ch_k$ will denote the model category of chain complexes 
over the ring $k$ equipped with the projective model structure where fibrations are the surjections and weak equivalences are the quasi-isomorphisms.
Note that we are using homological grading convention, i.e.\ the differential decreases degree. 
We write $\Chp$ for the subcategory of compact objects in $\Ch$, these are exactly the perfect complexes. 

$\Chd$ denotes the dg-category whose object are fibrant and cofibrant objects of $\Ch$. 
Note that there is a natural identification of the subcategory of compact objects in $\Chd$ with $\Chp$.

\subsection{Differential graded categories}
We recall some important definitions and results about dg-categories. Basic references are \cite{Keller06} and \cite{Toen07a}. Many technical results are proved in \cite{Tabuada07}.

Let $\dgCat$ denote the category of categories enriched in $\Ch$. Given $\cat D \in \dgCat$ we define the homotopy category $H_{0}(\cat D)$ as the category with the same objects as $\cat D$ and $\Hom_{H_{0}(\cat D)}( A, B) = H_{0}\uHom_{\cat D}(A, B)$.
If $\cat D$ is a model category enriched in $\Ch$ we define $L\cat D$ as its subcategory of fibrant cofibrant objects. 
We say $\cat D$ is a dg-model category if the two structures are compatible, that is if they satisfy the pushout-product axiom, see for example the definitions in Section 3.1 of \cite{Toen07a}. Then $Ho(\cat D) \simeq H_{0}(L\cat D)$, where we take the homotopy category in the sense of model categories on the left and in the sense of dg-categories on the right.

Recall that there are two model structures on $\dgCat$.
Firstly there is the \emph{Dwyer--Kan model structure}, denoted $\dgCat_{DK}$.

Recall the functor $\cat D \mapsto \cat D\mods$ sending a dg-category to its model category of modules, i.e.\ $\cat D\mods$ is the category of functors \mbox{$\cat D \to \Ch$} and strict natural transformations. This is naturally a cofibrantly generated model category enriched in $\Ch$
whose fibrations and weak equivalences are given levelwise. We usually consider its subcategory of fibrant and cofibrant objects, $L(\cat D\mods)$. 

\begin{rk}\label{rk-mod-model}
The construction of the model category $\cat D\mods$ follows Chapter 11 of \cite{Hirschhorn03}, but there are some changes since we are considering enriched diagrams. We sketch the argument here for lack of a reference. Let $I$ and $J$ denote the generating cofibrations and generating trivial cofibrations of $\Ch$. 
The generating (trivial) cofibrations of $\cat D\mods$ are then of the form $h^{X} \otimes A \to h^{X} \otimes B$ for $A \to B \in I$ (resp. $J$), where $h^X$ denotes the contravariant Yoneda embedding. As in Theorem 11.6.1 of \cite{Hirschhorn03} we transfer the model structure from $\Ch^{\textrm{discrete}(\cat D)}$. This works since $h^{X}$ is compact in $\cat D\mods$ and so are its tensor products with the domains of $I$, ensuring condition (1) of Theorem 11.3.2 holds. For the second condition we have to check that relative $J \otimes h^{X}$-cell complexes are weak equivalences. Pushouts are constructed levelwise. The generating trivial cofibrations of $\Ch$ are of the form $0 \to D(n)$. Since the pushout $U \leftarrow 0 \to D(n)$ is weakly equivalent to $U$  we are done. 

\end{rk}

\begin{rk}
In order to satisfy the smallness assumption we will always assume that all our dg-categories are small relative to some larger universe.
\end{rk}

The homotopy category of the model category $L\cat D\op\mods$ is called the derived category of $\cat D$ and denoted $D(\cat D)$.

\begin{defn} We denote by $\dgCat_{Mor}$ the category of dg-categories with the \emph{Morita model structure}, i.e.\ the Bousfield localization of $\dgCat_{DK}$ along functors that induce equivalences of the derived categories, see Chapter 2 of \cite{Tabuada07}. 
\end{defn}

Fibrant objects in $\dgCat_{Mor}$ are dg-categories $\cat A$ such that the homotopy category of $\cat A$ is equivalent (via Yoneda) to the subcategory of compact objects of $D(\cat A)$ \cite{Keller06}. We can phrase this as: every compact object is quasi-representable. An object $X \in D(\cat A)$ is called \emph{compact} if $\Hom_{D(\cat A)}(X, -)$ commutes with arbitrary coproducts. We denote by $()_{pe}$ the subcategory of compact objects.
Morita fibrant dg-categories are also called \emph{triangulated} since their homotopy category is an (idempotent complete) triangulated category. 

With these definitions $\cat D \mapsto L(\cat D\op \mods)_{pe}$, often denoted the \emph{triangulated hull}, is a fibrant replacement, for example $k\mapsto \Chp$.

The category $\dgCat$ is symmetric monoidal with tensor product $\cat D \otimes \cat E$ given as follows. The objects are $\ob \cat D \times \ob \cat E$ and $\uHom_{\cat D \otimes \cat E}((D, E), (D', E')) \coloneqq \uHom_{\cat D}(D, D') \otimes \uHom_{\cat E}(E, E')$. The unit is the one object category $k$, which is cofibrant in either model structure.

While $\dgCat$ is not a monoidal model category there is a derived internal Hom space and the mapping spaces in $\dgCat_{Mor}$ can be computed as follows \cite{Toen06}: Let $R\uHom(\cat C, \cat D)$ be the dg-category of right-quasirepresentable $\cat C \otimes \cat D\op$-modules, i.e.\ functors \mbox{$F\colon \cat C \otimes \cat D\op \to \Ch$} such that for any $c \in \cat C$ 
we have that $F(c, -)$ is isomorphic in the homotopy category to a representable object in $\cat D\op\mods$ and moreover cofibrant. Then $R\uHom$ is right adjoint to the derived tensor \mbox{product $\otimes^{L}$}.
Moreover $\Map(\cat C, \cat D)$ is weakly equivalent to the nerve of the subcategory of quasi-equivalences in $R\uHom(\cat C, \cat D)$. We will quote further properties of this construction as needed.

We will need the following lemma relating the two model structures. It follows since $\dgCat_{Mor}$ is a left Bousfield localization, hence the derived functor of the identity, given by fibrant replacement, preserves homotopy limits.

\begin{lemma}\label{lemma-dgcatholim}
Fibrant replacement as a functor from $\dgCat_{Mor}$ to $\dgCat_{DK}$ preserves homotopy limits.
\end{lemma}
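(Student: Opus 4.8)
The plan is to recognize the functor in question as the total right derived functor of a right Quillen functor and then to invoke the formal fact that such functors preserve homotopy limits. Since $\dgCat_{Mor}$ is the left Bousfield localization of $\dgCat_{DK}$, the two model structures have the same underlying category and the same cofibrations, while $\dgCat_{Mor}$ has more weak equivalences and hence fewer fibrations. Consequently the identity functor $\dgCat_{DK} \to \dgCat_{Mor}$ is left Quillen and its adjoint $\dgCat_{Mor} \to \dgCat_{DK}$ is right Quillen. The total right derived functor of the latter is computed by applying it to a Morita-fibrant replacement, so it coincides with the functor in the statement, namely fibrant replacement viewed as a functor $\dgCat_{Mor} \to \dgCat_{DK}$.

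It therefore suffices to prove the general assertion that the total right derived functor of a right Quillen functor preserves homotopy limits. Fix a small indexing category $\cat{I}$. Both $\dgCat_{DK}$ and $\dgCat_{Mor}$ are combinatorial (the latter being a left Bousfield localization of the former), so the injective model structures on $\dgCat_{DK}^{\cat{I}}$ and $\dgCat_{Mor}^{\cat{I}}$ exist. For each the constant-diagram functor is left Quillen with respect to the injective structure, so $\lim$ is right Quillen and $R\lim = \holim$. Because injective cofibrations are exactly the objectwise cofibrations, and these agree for the two localized structures, the levelwise identity is again right Quillen, and we obtain a strictly commuting square of right Quillen functors
\[
\begin{CD}
\dgCat_{Mor}^{\cat{I}} @>{\mathrm{id}_*}>> \dgCat_{DK}^{\cat{I}} \\
@V{\lim}VV @VV{\lim}V \\
\dgCat_{Mor} @>{\mathrm{id}}>> \dgCat_{DK}.
\end{CD}
\]

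Passing to total right derived functors finishes the argument. Right Quillen functors preserve fibrant objects, so the derived functor of a composite of right Quillen functors is the composite of the derived functors; comparing the two ways around the square then yields a natural equivalence $R(\mathrm{id}) \circ \holim_{\dgCat_{Mor}} \simeq \holim_{\dgCat_{DK}} \circ R(\mathrm{id}_*)$. As $R(\mathrm{id}_*)$ computes the levelwise Morita-fibrant replacement, the right-hand side is the homotopy limit in $\dgCat_{DK}$ of the diagram obtained by applying fibrant replacement objectwise, so the equivalence says exactly that fibrant replacement $\dgCat_{Mor} \to \dgCat_{DK}$ carries $\holim$ to $\holim$. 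The genuinely technical point, and the step I expect to need the most care, is establishing the injective model structures on the diagram categories and checking that $\lim$ and the levelwise identity are right Quillen for them; granting this, the conclusion is a formal manipulation of derived functors of Quillen adjunctions.
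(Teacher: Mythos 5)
Your proposal is correct and follows the same route as the paper: the paper's entire proof is the observation that, since $\dgCat_{Mor}$ is a left Bousfield localization of $\dgCat_{DK}$, the identity is right Quillen in the direction $\dgCat_{Mor}\to\dgCat_{DK}$, and its total right derived functor (fibrant replacement) therefore preserves homotopy limits. You have simply spelled out the standard formal argument behind that last step via injective diagram structures, which is a legitimate (and, given that $\dgCat$ is combinatorial, unproblematic) way to make it precise.
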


This means we can compute homotopy limits in $\dgCat_{Mor}$ by computing the homotopy limit of a levelwise Morita-fibrant replacement in $\dgCat_{DK}$.

We will abuse notation and write $R$ for the dg-algebra $R$ as well as for the 1-object dg-category with endomorphism space $R$ concentrated in degree 0. 

Recall that there is a model structure on differential graded algebras over $k$ with unbounded underlying chain complexes, which can be considered as the subcategory of one-object-categories in $\dgCat_{DK}$.

\begin{rk}
While we are working with differential graded categories we are facing some technical difficulties for lack of good internal hom-spaces. It would be interesting to know if another model of stable linear $(\infty, 1)$-categories, e.g. \cite{Lurie11}, could simplify our treatment.
\end{rk}

\subsection{Simplicial resolutions}\label{sect-framings}

Model categories are naturally models for $\oo$-categories and in fact have a notion of mapping spaces. Even if a model category is not enriched in $\sSet$ one can define mapping spaces in $Ho(\sSet)$.
One way to do this is by defining simplicial resolutions, which we will make extended use of. Since this construction is crucial to our results we recall it here.

Let $\De$ be the simplex category and consider the constant diagram functor $c\colon \cat M \to \cat M^{\De\op}$. Then a \emph{simplicial resolution} $M_{*}$ for $M \in \cat M$ is a fibrant replacement for $cM$ in the Reedy model structure on $\cat M^{\De\op}$. (For a definition of the Reedy model structure see for example Chapter 15 of \cite{Hirschhorn03}.)

For example, this construction allows one to compute mapping spaces: If $cB \to \tilde B$ is a simplicial resolution in $\cat M^{\De\op}$ and $QA$ a cofibrant replacement in $\cat M$ then $\Map(A, B) \simeq \Hom^{*}(QA, \tilde B) \simeq R(\Hom^{*}(-, c-))$, where the right-hand side uses the bifunctor $\Hom^{*}\colon \cat M\op \times \cat M^{\De\op} \to \mathbf{Set}^{\De\op}$ that is defined levelwise.
The dual notion is a \emph{cosimplicial resolution}.

Recall that $\cat V$ is a \emph{symmetric monoidal model category} if it is both symmetric monoidal and a model category and the structures are compatible, to be precise they satisfy the pushout-product axiom, see Definition 4.2.1 in \cite{Hovey07}. 
This means in particular that tensor and internal Hom give rise to Quillen functors. We then call the adjunction of two variables satisfying the pushout-product axiom a \emph{Quillen adjunction of two variables}.

Similarly a \emph{model $\cat V$-category} $\cat M$ is a model category $\cat M$ that is tensored, cotensored and enriched over $\cat V$
such that the pushout product axiom holds. We call a model $\Ch$-category a \emph{dg-model category}.

For example a model $\sSet$-category, better known as a \emph{simplicial model category}, $\cat M$ consists of the data $(\cat M, \Map, \otimes, \map)$ where the enrichment
$\Map\colon \cat M\op \times \cat M \to \sSet$, the cotensor (or power)
$\map\colon \sSet\op \times \cat M \to \cat M$ 
and the tensor $\otimes\colon \sSet \times \cat M \to M$ satisfy the obvious adjointness properties (in other words, they form an adjunction of two variables). Here the pushout-product axiom says that the natural map $f \Box g\colon {A \otimes L \amalg_{A \otimes K} B \otimes K} \to B \otimes L$ is a cofibration if $f$ and $g$ are and is acyclic if $f$ or $g$ is moreover acyclic.

While not every model category is simplicial, every homotopy category of a model category is enriched, tensored and cotensored in $Ho(\sSet)$.
In fact, $\cat M$ can be turned into a simplicial category in the sense that there is an enrichment $\Map$ and there are a tensor and cotensor which can be constructed from the simplicial and cosimplicial resolutions.
Let a cosimplicial resolution $A^* \in \cat M^\De$ and a simplicial set $K$ be given. Consider $\De K$, the category of simplices of $K$, with the natural map $u\colon \De K \to \De$ sending $\De[n] \mapsto K$ to $[n]$.
We define $A^* \otimes K = \colim_{\De K} A^{n}$ to be the image of $A^*$ under $\colim \circ \ u^*\colon \cat C^\De \to \cat C^{\De K} \to \cat C$. 
Similarly there is $A^K \coloneqq \map(K, A)$ which is the image of the simplicial resolution $A_* \in \cat M^{\De \op}$ under $\lim \circ \ v^*$, where $v\colon \De K\op \to \De\op$. This can also be written as $A^K = \lim_n (\prod_{K_n} A_{n}$).

If $\cat M$ is a simplicial category one can use $(RA)^{\De^{n}}$ for $A_{n}$ and $(QA) \otimes \De^{n}$ for $A^{n}$. 

\begin{rk}\label{rk-cotensor-holim}
Note that $A^{K}$ can also be written as a homotopy limit, $\holim_{\De K\op} A_{n}$. This follows for example from Theorem 19.9.1 of \cite{Hirschhorn03}, the conditions are satisfied by Propositions 15.10.4 and  16.3.12.
\end{rk}

The functor $(A, K) \mapsto A^K$ is adjoint to the mapping space construction $A, B \mapsto \Hom(QB, A_*) \in \sSet$. Similarly $(B, K) \mapsto B \otimes K$ is adjoint to the mapping space construction $A, B \mapsto \Hom(B^*, RA) \in \sSet$, see Theorem 16.4.2 in \cite{Hirschhorn03}.
Hence on the level of homotopy categories the two bifunctors together with $\Map$ give rise to an adjunction of two variables. This is of course not a Quillen adjunction, but it is sensitive enough to the model structure to allow for certain derived functors. 
We will quote further results about this construction as needed.

\subsection{Homotopy ends}\label{sect-holim}

We will freely use homotopy limits in model categories, see \cite{Hirschhorn03} for background. Since they are less well-treated in the literature we recall the construction of \emph{homotopy ends} of bifunctors.
Recall that an end is a particular kind of limit. Let $\al(I)$ denote the \emph{twisted arrow category} 
of $I$: Objects are arrows, $f\colon i \to j$, and morphisms are opposites of factorizations, i.e.\ $(f\colon i \to j) \Rightarrow (g\colon i' \to j')$ consists of maps $i' \to i$ and $j \to j'$ such that their obvious composition with $f$ equals $g$. Then there are natural maps $s$ and $t$ (for source and target) from $\al(I)$ to $I\op$ and $I$ respectively. For a bifunctor $F\colon I\op \times I \to \cat C$ one defines the end $\int_i F(i, i)$ to be $\lim_{\al(I)} (s \times t)^*F$. 
Then the homotopy end is: \[\int^h_i F(i,i) \coloneqq \holim_{\al(I)} (s \times t)^* F\]
Details on this view on homotopy ends can be found (dually) in \cite{Isaacson09}. 

The canonical example for an end is that natural transformations from $F$ to $G$ can be computed as $\int_A \Hom(FA, GA)$. A similar example of the use of homotopy ends is provided by the computation of mapping spaces in the diagram category of a model category $\Map(A_{\bullet}, B_{\bullet}) \simeq \int^{h}_{i} \Map(A_{i}, B_{i})$. The case of simplicial sets is dealt with in \cite{Dwyer83}.

In general, we have the following lemma. Assume $\cat M^{I}$ exists with the injective model structure  
and let $Q$ and $R$ denote cofibrant and fibrant replacement in this model category.

\begin{lemma}\label{lemma-htpy-end}
Consider a right Quillen functor $\underline H\colon \cat M\op \times \cat M \to \cat V$. 
Then there is a natural Quillen functor $(F, G) \to \int_{i} \underline H(Fi,Gi)$ from $(\cat M^{I})\op \times \cat M^I$ to $\cat V$ whose derived functor is 
\[(F, G) \mapsto \int_{i}\underline H(QFi, RGi)\]
which is weakly equivalent to
\[(F, G) \mapsto \int^{h}_{i} R\underline H(Fi, Gi)\]
\end{lemma}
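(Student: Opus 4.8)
The plan is to factor the construction through the twisted arrow category and to reduce every assertion to the two-variable right Quillen (pushout--product) property of $\underline H$ itself. By the definition of the end recalled above, $\int_{i}\underline H(Fi,Gi) = \lim_{\al(I)} \underline H_{\al}(F,G)$, where $\underline H_{\al}(F,G)$ denotes the $\al(I)$-diagram $(f\colon i\to j)\mapsto \underline H(Fi,Gj)$; the contravariance of $\underline H$ in its first slot matches the source map $s\colon \al(I)\to I\op$ and its covariance in the second slot matches $t\colon \al(I)\to I$, so that $\underline H_\al(F,G) = \underline H((s\times t)^{*}(F,G))$. The functor under study is thus the composite $\lim_{\al(I)}\circ\,\underline H_{\al}$, and I would establish the three assertions in the order: Quillen property, derived functor, comparison with the homotopy end.

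First I would dispose of the outer limit. The constant-diagram functor $c\colon\cat V\to\cat V^{\al(I)}$ sends cofibrations and weak equivalences to levelwise ones, hence is left Quillen for the injective structure on $\cat V^{\al(I)}$; therefore its right adjoint $\lim_{\al(I)}$ is right Quillen. Since $\lim_{\al(I)}$ preserves all limits, in particular the pullbacks appearing in the pullback-corner maps, the corner map of the composite $\lim_{\al(I)}\circ\,\underline H_{\al}$ is obtained by applying $\lim_{\al(I)}$ to the corner map of $\underline H_{\al}$, and a right Quillen functor preserves (acyclic) fibrations. It therefore suffices to prove that $\underline H_{\al}$ is a two-variable right Quillen functor from $(\cat M^{I})\op\times\cat M^{I}$ with its injective structures to $\cat V^{\al(I)}$ with its injective structure.

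This last step is the main obstacle. Here I would parallel the treatment of mapping spaces in diagram categories (Hirschhorn \cite{Hirschhorn03} and Dwyer--Kan \cite{Dwyer83}, of which the present statement is the case $\underline H=\Map$, $\cat V=\sSet$). Using the two-variable adjunction underlying $\underline H$ together with the left Kan extensions $L_{i}\dashv \mathrm{ev}_{i}$, the requirement that the corner map of $\underline H_{\al}$ have the right lifting property (RLP) against every levelwise acyclic cofibration of $\cat V^{\al(I)}$ reduces, after adjunction, to the pushout--product condition for $\underline H$ tested against a levelwise cofibration and a levelwise fibration. The two inputs that make this go through are that injective cofibrations of $\cat M^{I}$ are exactly the levelwise cofibrations, and that injective fibrations are detected by such lifting tests; the delicate point, which I expect to be where the work lies, is matching the non-levelwise injective fibrations of $\cat V^{\al(I)}$ with the levelwise pullback-corner data produced by $\underline H$. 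Run on objects rather than on maps, the same argument shows that $\underline H_{\al}(F,G)$ is injectively fibrant whenever $F$ is injectively cofibrant and $G$ injectively fibrant.

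With the Quillen property in hand the derived functor is forced: the right-derived two-variable functor is computed by replacing the contravariant variable cofibrantly and the covariant variable fibrantly, giving $(F,G)\mapsto\int_{i}\underline H(QFi,RGi)$ with $Q,R$ the injective (co)fibrant replacements. Since $\underline H_{\al}(QF,RG)$ is injectively fibrant, its ordinary limit already computes $\holim_{\al(I)}$. For the final identification I would compare the two $\al(I)$-diagrams $(f\colon i\to j)\mapsto\underline H(QFi,RGj)$ and $(f\colon i\to j)\mapsto R\underline H(Fi,Gj)$. A short lemma shows that injectively fibrant objects are levelwise fibrant (lift against $L_{i}$ of an acyclic cofibration of $\cat M$) and, dually, injectively cofibrant objects are levelwise cofibrant, so $QFi$ is cofibrant and $RGj$ fibrant in $\cat M$; hence both diagrams compute the value of the derived functor $R\underline H$ at $(Fi,Gj)$ and are in particular objectwise fibrant and levelwise weakly equivalent. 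Homotopy invariance of $\holim_{\al(I)}$ then yields
\[
\int_{i}\underline H(QFi,RGi) \;=\; \holim_{\al(I)}\underline H_{\al}(QF,RG)\;\simeq\;\holim_{\al(I)} R\underline H\bigl((s\times t)^{*}(F,G)\bigr)\;=\;\int^{h}_{i} R\underline H(Fi,Gi),
\]
which is the asserted weak equivalence.
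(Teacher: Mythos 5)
There is a genuine gap, and it sits exactly where you flag ``the delicate point.'' The paper establishes the two-variable Quillen property in one line by checking the pushout--product axiom on the \emph{tensor} side: the tensor $\cat V \times \cat M^I \to \cat M^I$ is computed levelwise, pushouts in $\cat M^I$ are levelwise, and in the injective structure cofibrations and weak equivalences are levelwise, so the axiom for $\cat M^I_{inj}$ follows immediately from the axiom for $\cat M$; the end $\int_i \underline H(Fi,Gi)$ is then right Quillen into $\cat V$ by the general equivalence of the three forms of the pushout--product axiom, with no intermediate model structure on $\cat V^{\al(I)}$ ever needed. You instead try to show that $\underline H_{\al}\colon (\cat M^I)\op\times\cat M^I\to\cat V^{\al(I)}_{inj}$ is right Quillen and then compose with $\lim_{\al(I)}$. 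But levelwise considerations only show that the pullback-corner map of $\underline H_{\al}$ is a \emph{levelwise} (acyclic) fibration, and levelwise fibrations are not injective fibrations in general; conversely, there is no model structure on $\cat V^{\al(I)}$ with levelwise fibrations for which $\lim$ is right Quillen (that would require the constant functor to be left Quillen, which is the injective case). The adjunction chase you sketch does not close this: transposing the lifting problem against a levelwise acyclic cofibration of $\cat V^{\al(I)}$ produces a coend-tensor over $\al(I)$ that mixes levels, and there is no reason for the resulting pushout-product to be a levelwise acyclic cofibration of $\cat M^I$. So the first and main assertion of the lemma is not established by your argument.

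The same unproven fibrancy is load-bearing later: your identification $\int_i\underline H(QFi,RGi)=\holim_{\al(I)}\underline H_{\al}(QF,RG)$ rests on ``$\underline H_{\al}(QF,RG)$ is injectively fibrant,'' which is the claim in question. The paper avoids this by a different device: it writes the end as $\lim\circ\,\underline H^{\al(I)}\circ(s\times t)^*$ and argues that the derived functor of the composite is the composite of derived functors, using that levelwise fibrant replacement is a right deformation retract for the inner functor (in the sense of \cite{Dwyer04}) because $(s\times t)^*$ preserves all weak equivalences and $\underline H$ preserves weak equivalences between levelwise fibrant--cofibrant pairs. Your closing comparison of the two $\al(I)$-diagrams $\underline H(QFi,RGj)$ and $R\underline H(Fi,Gj)$ via levelwise weak equivalence and homotopy invariance of $\holim$ is essentially the paper's idea in different clothing and would be fine once the Quillen property and the $\lim\simeq\holim$ step are repaired as above.
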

\begin{proof}
The $\cat V$-structure exists by standard results in \cite{Kelly82}.
It is in fact a model $\cat V$-category. One can check the pushout-product axiom levelwise; this is enough as cofibrations are defined levelwise.
Hence the derived functor is $(F, G) \mapsto \int_{i}\underline H(QFi, RGi)$. 

On the other hand $\int_{i} \underline H(Fi,Gi)$ is the composition of levelwise hom-spaces with the limit, 
\[\lim \circ \ \underline H^{\al(I)} \circ (s\times t)^{*}\colon (\cat M^{I})\op \times \cat M^{I} \to (\cat M\op \times \cat M)^{\al(I)} \to \cat V^{\al(I)} \to \cat V\]
But then the derived functor is the composition of derived functors, $\int^{h}_{i} R\underline H(Fi, Gi)$. 

This is a little subtle, since our aim is to avoid fibrantly replacing at the level of diagram categories. However, the levelwise derived functor $R\underline H$ from $(\cat M\op)^{I} \times \cat M^{I}$ to $\cat V^{\al(I)}$ is a derived functor. This is the case since levelwise fibrant replacement gives a right deformation retract in the sense of 40.1 in \cite{Dwyer04} since $(s\times t)^{*}$ preserves all weak equivalences and levelwise $\underline H$ preserves weak equivalences between levelwise fibrant objects. 
\end{proof}

\begin{rk}
A slight modification of the lemma implies
the formula for mapping spaces. We just have to replace $\underline H$ by $\Hom^{*}\colon \cat M\op \times \cat M^{\De\op} \to \sSet$ and adjust the proof accordingly.
\end{rk}

\section{Morita Cohomology}\label{ch-morita}

\subsection{Cohomology of presheaves of model categories}\label{sect-cohomodel}

In this section we define what we mean by cohomology of a presheaf with coefficients in a model category. We also prove two technical results that are needed for the computation of Morita cohomology in the next section. 

Let us assume $\cat M$ is a cellular and left proper model category. 
The case we are interested in is $\cat M = \dgCat_{Mor}$ over a field $k$.
It follows from \cite{HolsteinC} that $\dgCat$ with the Morita or the Dwyer-Kan model structure is cellular and is left proper if the ground ring is a field. 

We will consider the category $\cat M^J$ of presheaves on a category $J\op$ with values in a model category $\cat M$. If $\cat M$ is left proper and cellular then so is $\cat M^{J}$.
We will denote by $\cat M^J_{proj}$ the projective model structure on $\cat M^{J}$ with levelwise weak equivalences and fibrations, and whose cofibrations are defined by the lifting property. If $\cat M$ is cofibrantly generated this is well-known to be a model structure, which is cellular and left proper if $\cat M^{J}$ is.

We are interested in enriching the model category $\cat M^{J}_{proj}$.
Let us start by recalling the case where the construction is straightforward. Let $\cat V$ be a monoidal model category and assume that it has a cofibrant unit. $\cat V = \sSet, \Ch$ are examples. Then if $\cat M$ is a model $\cat V$-category, 
then it is easy to check that so is $\cat M^J$. In particular if $\cat M$ is monoidal then $\cat M^J$ is a model $\cat M$-category. 
We can write $\uHom$ for the enriched hom-spaces, and the functor $\Hom\colon (\cat M^{J})\op \times \cat M^{J} \to \cat M$ is right Quillen and there is a derived functor $R\uHom$ obtained by fibrant and cofibrant replacement, see Lemma \ref{lemma-htpy-end}

If $\cat M$ is monoidal and a model category, but not a monoidal model category, then we can still construct an $\cat M$-enrichment of $\cat M^J_{proj}$ as a plain category, which will of course not be a model category 
enrichment. We define $\uHom_{M^J}(A, B) = \int_j \uHom(A(j), B(j))$, see \cite{Kelly82}.

Note that this enrichment is not in general derivable, i.e.\ weak equivalences between cofibrant and fibrant pairs of objects do not necessarily go to weak equivalences. So defining a suitable substitute for $R\uHom$ takes some care, see the proof of Lemma \ref{lemma-fibrancy}.

We have to consider this case since our example of interest is $\cat M = \dgCat$, which a symmetric monoidal category and a model category, see \cite{Toen07a}, but not a symmetric monoidal model category. (The tensor product of two cofibrant objects need not be cofibrant.)

Now fix a locally contractible topological space $X$, for example a CW complex, and consider presheaves on $Op(X)$. We consider the Grothendieck topology induced by the usual topology on $X$ and write the site as $(\mathbf{Set}^{Op(X)\op}, \tau)$. In other words $\tau$ is just the collection of maps represented by open covers. 
(We will not use any more general Grothendieck topologies or sites.)
We let $J = Op(X)\op$. Our aim is to localize presheaves on $Op(X)$ with respect to covers in $\tau$.

Recall that a \emph{left Bousfield localization} of a 
 model category $\cat N$ at a set of maps $S$ is a left Quillen functor $\cat N \to \cat N_S$ that is initial among left Quillen functors sending the elements of $S$ to isomorphisms in the homotopy category.
We need to know that left Bousfield localizations of $\cat M^{I}$ exist.

\begin{lemma}
Assume $\cat N$ is a cellular and left proper model category and let $S$ be a set of maps. Then $\cat N_{S}$ exists. 
The cofibrations are equal to projective cofibrations, 
weak equivalences between are $S$-local weak equivalences and fibrant objects are $S$-local objects.
\end{lemma}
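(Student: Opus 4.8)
The plan is to recognize this as the classical existence theorem for left Bousfield localizations and to follow the strategy of Chapter 4 of \cite{Hirschhorn03}. First I would fix the data of the putative model structure: declare the cofibrations of $\cat N_S$ to be exactly the cofibrations of $\cat N$, declare the weak equivalences to be the $S$-local equivalences, and define the fibrations by the right lifting property against maps that are simultaneously cofibrations and $S$-local equivalences. Recall that a fibrant object $W$ is \emph{$S$-local} if for every $f\colon A \to B$ in $S$ the induced map $\Map(B, W) \to \Map(A, W)$ is a weak equivalence of simplicial sets, and that a map $g\colon X \to Y$ is an \emph{$S$-local equivalence} if $\Map(g, W)$ is a weak equivalence for every $S$-local $W$. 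The two-out-of-three property and closure under retracts for $S$-local equivalences are then formal consequences of the corresponding properties for weak equivalences of simplicial sets, and the characterization of cofibrations as the projective cofibrations is immediate from the construction, since they are left unchanged.

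The heart of the argument is the construction of a functorial localization functor together with a \emph{set} of generating trivial cofibrations, and this is precisely where the cellularity hypothesis enters. Concretely, one augments the generating cofibrations of $\cat N$ by horns built from the maps of $S$ (after suitable cofibrant replacement of their domains and codomains) and runs a small object argument to produce a functorial fibrant–and–local replacement. Cellularity guarantees that the cells are compact relative to the relevant subcategory, so that the transfinite construction converges and, crucially, that the collection of generating trivial cofibrations can be taken to be a set rather than a proper class. This cardinality estimate is the technical core of \cite{Hirschhorn03}.

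Left properness then enters in verifying the remaining model category axioms. It is used to show that the trivial fibrations of $\cat N_S$ coincide with those of $\cat N$, so that the cofibrations are unambiguously recovered by the lifting property, and to establish that the fibrant objects are exactly the $S$-local objects. The factorization axioms follow from the small object argument applied to the two generating sets, and the lifting axioms follow once the trivial cofibrations and fibrant objects have been correctly identified.

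I expect the main obstacle to be the cardinality bookkeeping that produces a \emph{set} of generating trivial cofibrations: this is the step that genuinely requires cellularity, and without it the localized model structure would fail to be cofibrantly generated. Everything else is either formal (two-out-of-three, retracts, the characterization of cofibrations) or a standard application of the small object argument together with left properness. In the present paper all of this is subsumed by citing Theorem 4.1.1 of \cite{Hirschhorn03}, whose hypotheses are exactly left properness and cellularity, and which also guarantees that $\cat N_S$ is again left proper and cellular, as is needed for the later iterated localizations.
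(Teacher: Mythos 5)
Your proposal is correct and takes essentially the same approach as the paper: the paper's entire proof is the one-line citation to Theorem 4.1.1 of \cite{Hirschhorn03}, whose hypotheses (cellularity and left properness) and conclusions you have accurately unpacked. Your account of where cellularity (the cardinality argument yielding a \emph{set} of generating trivial cofibrations) and left properness enter is consistent with Hirschhorn's argument, so your write-up is simply an expanded version of the citation the paper relies on.
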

\begin{proof}
This is Theorem 4.1.1 of \cite{Hirschhorn03}. 
\end{proof}
Recall for future reference that an object $P$ is $S$-local if it is fibrant in $\cat N$ and every $f\colon A \to B \in S$ induces a weak equivalence $\Map_{\cat N}(B, P) \simeq \Map_{\cat N}(A, P)$. A map $g\colon C \to D$ is an $S$-local weak equivalence if it induces a weak equivalence $\Map_{\cat N}(D, P) \simeq \Map_{\cat N}(C, P)$ for every $S$-local $P$.

Given a set $N$ we write $N \cdot M \coloneqq \amalg_N M \in \cat M$ for the tensor over $\mathbf{Set}$ and extend this notation to presheaves.

\begin{defn}
Let $\cat M^J_{\tau} \coloneqq (\cat M^{J}_{proj})_{H_{\tau}}$ denote the left Bousfield localization of $\cat M^J_{proj}$ with respect to 
\[H_{\tau} = \{S \cdot \id_{\cat M} \to h_{W} \cdot \id_{\cat M}
\ | \ S \to h_{W} \in \tau\}\]
Here $h_{-}$ denotes the covariant Yoneda embedding $X \mapsto \Hom(-, X)$.
\end{defn}

We have assumed $\cat M$ and hence $\cat M^{J}$ is cellular and left proper. Since $H_{\tau}$ is a set the localization $\cat M^{J}_{\tau}$ exists.

We have now localized with respect to \v Cech covers. We are interested in the \emph{local model structure} which is obtained by localizing at all hypercovers. 

\begin{rk}
By way of motivation see \cite{Dugger04a} for the reasons that localizing at hypercovers gives the local model structure on simplicial presheaves, i.e.\ weak equivalences are precisely stalk-wise weak equivalences.
\end{rk}

\begin{defn}
A \emph{hypercover} of an open set $W \subset X$ is a simplicial presheaf $U_*$ on the topological space $W$ such that:
\begin{enumerate}
\item For all $n \geq 0$ the presheaf $U_n$ is isomorphic to a disjoint union of a small family of presheaves representable by open subsets of $W$. We can write $U_n = \amalg_{i \in I_n} h_{U_n^{(i)}}$ for a set $I_n$ where the $U_n^{(i)} \subset W$ are open.
\item The map $U_0 \to *$ is in $\tau$, 
i.e.\ the $U_0^{(i)}$ form an open cover of $W$.
\item For every $n \geq 0$ the map $U_{n+1} \to (cosk_n U_*)_{n+1}$ is in $\tau$. Here $(cosk_n U)_{n+1} = M_n^WU $ is the $n$-th matching object computed in simplicial presheaves over $W$. 
\end{enumerate}

\end{defn}

Intuitively,  the spaces occurring in $U_{1}$ form a cover for the intersections of the $U^{(i)}_{0}$, the spaces in $U_{2}$ form a cover for the triple intersections of the $U_{1}^{(i)}$ etc. To every \v Cech cover one naturally associates a hypercover in which all $U_{n+1} \to (cosk_{n} U_{*})_{n+1}$ are isomorphisms.

Note that despite the notation $U_n$ is not an open set but a presheaf on open sets that is a coproduct of representables.

We denote by $I = \cup I_{n}$ the category indexing the representables making up the hypercover. 
Associated to any hypercover of a topological space is the simplicial space $n \mapsto \amalg_{i \in I_{n}} U_{n}^{i}$ which is also sometimes called a hypercover.

Hypercovers are naturally simplicial presheaves. We work with presheaves with values in a more general model category. The obvious way to associate to a simplicial object in a model category a plain object is to take the homotopy colimit.

\begin{defn}\label{defn-hypercover}
Let the set of \emph{hypercovers in $\cat M^J$} be defined as
\[\check H_{\tau} = \{\hocolim_I (U_* \cdot \id_{\cat M} )\to h_{W} \cdot \id_{\cat M}
\ | \ U_* \to h_{W} \textrm{ a hypercover\}}\] 
where 
we take the levelwise tensor and the homotopy colimit in $\cat M^J$ with the projective model structure. Since disjoint union commutes with cofibrant replacement we could equivalently take the limit of $U_{n}$ over $\De\op$, the opposite of the simplex category. 
\end{defn}

\begin{rk} 
Note that the homotopy colimit does not change if instead we use the localised model structure $\cat M^J_\tau$. 
\end{rk}

\begin{defn}
Let the left Bousfield localization of $\cat M^{J}_{\tau}$ at the hypercovers $\check H_{\tau}$ be denoted by $\cat M^J_{\check \tau}$ and call it the \emph{local model structure}.
\end{defn}

The localization exists just as before. The fibrant objects are the $\check H_{\tau}$-local objects of $\cat M^{J}$.

Note that $\uHom(h_W, \cat F) \simeq \cat F(W)$ if the model structure on $\cat M^J$ is enriched over $\cat M$. So we sometimes write hypercovers as if they are open sets. For example given a hypercover $U_*$ and a presheaf $\cat F \in \cat M^J$ we write $\cat F(U_n)$ for $\uHom(U_n, \cat F)$ etc. 
In particular $\cat F(U_{n}) = \cat F(\amalg_i U^{(i)}_n) \coloneqq \prod_i \cat F(U^{(i)}_n)$.

To compute cohomology we need to compute the derived functor of global sections. First we need to know that pushforward is right Quillen. 
The arguments in the proofs of the following two lemmas are Propositions 1.22 and 3.37 in \cite{Barwick07}, we repeat them for the reader's convenience.
\begin{lemma}
Consider a map $r\colon C \to D$ of diagrams and a model category $M$. Then there is a Quillen adjunction $r_!\colon M^C_{proj} \rightleftarrows M^D_{proj}  \cocolon r^*$. 
\end{lemma}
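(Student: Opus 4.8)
The plan is to verify that the restriction functor $r^*$ is a right Quillen functor; since we are asserting the adjunction $r_! \dashv r^*$, this is equivalent to the stated claim that $(r_!, r^*)$ is a Quillen adjunction.

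First I would establish the existence of the adjunction as plain categories. The functor $r^* \colon M^D \to M^C$ is precomposition with $r$, so $(r^* G)(c) = G(rc)$. Because $M$ is a model category it is in particular complete and cocomplete, so $r^*$ admits a left adjoint $r_!$ given by left Kan extension along $r$, computed pointwise as $(r_! F)(d) = \colim_{(r \downarrow d)} F$, with unit and counit supplied by the universal property of the Kan extension. This produces the adjunction $r_! \colon M^C \rightleftarrows M^D \cocolon r^*$.

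Next I would check the Quillen condition. In the projective model structure on $M^D_{proj}$ both fibrations and weak equivalences are defined levelwise, that is, $G \to G'$ is a (trivial) fibration precisely when $G(d) \to G'(d)$ is one in $M$ for every $d \in D$. Since $r^*$ is precomposition, for any (trivial) fibration $G \to G'$ in $M^D_{proj}$ and any $c \in C$ the map $(r^*G)(c) = G(rc) \to G'(rc) = (r^*G')(c)$ is a (trivial) fibration in $M$; hence $r^* G \to r^* G'$ is a levelwise (trivial) fibration, i.e.\ a (trivial) fibration in $M^C_{proj}$. Thus $r^*$ preserves fibrations and trivial fibrations and is right Quillen, so $(r_!, r^*)$ is a Quillen adjunction.

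I do not expect any substantial obstacle here: the entire argument rests on the observation that restriction along $r$ is compatible with the levelwise nature of the projective fibrations and weak equivalences. The only points requiring any care are ensuring $M$ is cocomplete so that $r_!$ exists, which is automatic for a model category, and that $M^C_{proj}$ and $M^D_{proj}$ are genuinely model categories, which holds once $M$ is cofibrantly generated as recalled above.
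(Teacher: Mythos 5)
Your argument is correct and is essentially the same as the paper's: define $r^*$ by precomposition, obtain $r_!$ as a pointwise left Kan extension using cocompleteness of $M$, and observe that $r^*$ preserves the levelwise (trivial) fibrations of the projective structures, hence is right Quillen. The paper states this in three sentences; you have merely spelled out the details.
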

\begin{proof} We define $r^*$ by precomposition. Then $r_!$ exists as a Kan extension. Clearly $r^*$ preserves levelwise weak equivalences and fibrations.
\end{proof}
\begin{lemma}
Given any map $r: (C, \tau) \to  (D, \sigma)$ 
that preserves covers and hypercovers we get a Quillen adjunction 
$r_!\colon M^C_{\check \tau} \rightleftarrows M^D_{\check \sigma} \cocolon r^*$. The same adjunction works if we only localize with respect to \v Cech covers.
\end{lemma}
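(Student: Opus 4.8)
The plan is to deduce the localized adjunction from the projective one (the preceding lemma) by invoking the universal property of left Bousfield localization recalled above. Recall that $\cat M^C_{\check\tau}$ is, by construction, initial among left Quillen functors out of $\cat M^C_{proj}$ that send the maps of $\check H_\tau$ to isomorphisms in the homotopy category. Hence it suffices to exhibit a single left Quillen functor $\cat M^C_{proj} \to M^D_{\check\sigma}$ inverting every element of $\check H_\tau$: the descended functor then exists automatically and is left Quillen, and its right adjoint is the same $r^*$ since neither localization alters the underlying categories or the adjoint pair. The obvious candidate is the composite $\cat M^C_{proj} \xrightarrow{r_!} \cat M^D_{proj} \to M^D_{\check\sigma}$, where the first arrow is left Quillen by the previous lemma and the second is the localization functor, which is left Quillen because it is the identity on cofibrations. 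The composite of left Quillen functors is left Quillen, so the only thing left to check is that it inverts $\check H_\tau$.

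The core of the argument is therefore the computation of $r_!$ on a generating map $\hocolim_I (U_* \cdot \id_{\cat M}) \to h_W \cdot \id_{\cat M}$ associated to a hypercover $U_* \to h_W$ of $(C,\tau)$. First I would record that $r_!$, being a left adjoint, takes representables to representables, $r_!\, h_W \cong h_{r(W)}$, and that it commutes with the set-tensoring $(-)\cdot\id_{\cat M}$ and with disjoint unions; consequently $r_!(U_n \cdot \id_{\cat M})$ is the presheaf on $(D,\sigma)$ assembled from $r$ applied to the open sets making up $U_n$. Being left Quillen, $r_!$ moreover preserves homotopy colimits of cofibrant diagrams, so it carries $\hocolim_I (U_* \cdot \id_{\cat M})$ to $\hocolim_I (r(U_*) \cdot \id_{\cat M})$. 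Thus $r_!$ sends the chosen generating map to the map $\hocolim_I (r(U_*) \cdot \id_{\cat M}) \to h_{r(W)} \cdot \id_{\cat M}$ attached to the simplicial presheaf $r(U_*) \to h_{r(W)}$.

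By hypothesis $r$ preserves hypercovers, so $r(U_*) \to h_{r(W)}$ is a hypercover of $r(W)$ in $(D,\sigma)$; the resulting map is then by definition an element of $\check H_\sigma$, hence a weak equivalence in $M^D_{\check\sigma}$. This shows the composite inverts all of $\check H_\tau$, so by the universal property it factors through $\cat M^C_{\check\tau}$, yielding a left Quillen functor $r_!\colon M^C_{\check\tau} \to M^D_{\check\sigma}$ with right adjoint $r^*$. Replacing hypercovers by \v Cech covers and $\check H$ by $H$ throughout, and using that $r$ preserves covers, gives verbatim the statement for the localization at \v Cech covers only.

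The step I expect to require the most care is the interaction of $r_!$ with the homotopy colimit in the definition of the generating maps: one must ensure that $r_!$ genuinely computes the homotopy colimit of the $r$-image diagram, i.e.\ that a cofibrant replacement of $U_* \cdot \id_{\cat M}$ in $\cat M^C_{proj}$ is carried by $r_!$ to an object computing $\hocolim_I (r(U_*) \cdot \id_{\cat M})$ in $\cat M^D_{proj}$. This is exactly the statement that a left Quillen functor preserves homotopy colimits, combined with the compatibility (already used in Definition \ref{defn-hypercover}) of the relevant cofibrant replacements and disjoint unions with $r_!$, which holds because $r_!$ is a left adjoint preserving representables.
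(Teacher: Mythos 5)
Your proof is correct and follows essentially the same route as the paper: both invoke the universal property of left Bousfield localization applied to the composite left Quillen functor $M^C_{proj} \to M^D_{proj} \to M^D_{\check\sigma}$, which inverts the generating maps because $r$ preserves (hyper)covers. The paper merely states this more tersely and runs the argument in two stages (first for $\tau$, then for $\check\tau$ starting from $M^C_\tau$), whereas you spell out the computation of $r_!$ on the generating maps; both are fine.
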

\begin{proof} To prove the result for the localization with respect to covers we use the universal property of localization applied to the map $M^C \to M^D \to M^D_{\sigma}$ which is left Quillen and sends hypercovers to weak equivalences and hence must factor through $M^C \to M^C_{\tau}$ in the category of left Quillen functors, giving rise to $r_{!} \vdash r^{*}$.

To prove the result for the localization at hypercovers we repeat the same argument for $M^C_{\tau} \to M^C_{\check \tau}$ etc.
\end{proof}

Consider now locally contractible topological spaces $X$ and $Y$ with sites of open sets $(Op(X),\tau)$ and $(Op(Y), \sigma)$.
Given a map $f\colon X \to Y$ consider $f\inv\colon (Op(X), \tau) \to (Op(Y), \sigma)$. Then $f_* \coloneqq (f\inv)^*$ and by the above it is a right Quillen functor. 

As usual we write $\Ga$ or $\Ga(X, -)$ for $(\pi_X)_*$ where $\pi_X \colon X \to *$.

\begin{defn} Let $\cat C$ be a presheaf with values in a model category $\cat M$ and let $\cat C^{\#}$ be a fibrant replacement for $\cat C$ in the local model category $\cat M^J_{\check \tau}$ defined above. Then we define global sections as
\[R\Ga(X, \cat C) = \cat C^{\#}(X)\]
\end{defn}

In Section \ref{sect-sheafification}
we will compute $\cat C^{\#}$ if $\cat C$ is constant.

Since a hypersheaf satisfies $\cat F(X) = \holim_i \cat F(U_i)$ for some cover $\{U_i\}$ we can also think of global section as a suitable homotopy limit. A concise formulation of this will be given in Theorem \ref{thm-cohomology-holim}. 

\begin{defn} Consider the presheaf $\underline k$ that is constant with value $k \in \dgCat$ and let $\underline k^{\#}$ be a fibrant replacement for $\underline k$. Then we define \emph{Morita cohomology} as
\[R\Ga(X, \underline k) \coloneqq R\Ga(X, \uChp) = \underline k^{\#}(X)\]
in $Ho(\dgCat_{Mor})$. We write $\cat H^M(X) \coloneqq R\Ga(X, \underline k)$.
\end{defn}

One can also consider the version with unbounded fibers, $R\Ga(X, \underline \Ch)$.
\begin{rk} As usual $R\Ga(\varnothing, \underline k) \simeq 0$, the terminal object of $\dgCat$.
\end{rk}
\begin{rk} The term cohomology is slightly misleading as our construction corresponds to the underlying complex and not the cohomology groups. It is an interesting question whether there is an analogue to taking cohomology and how it relates for example to semi-orthogonal decomposition as defined in \cite{Bondal02}. 
\end{rk}

We finish this section with two lemmas on the fibrant objects and weak equivalences in the local model structure.
\begin{defn}
We call a presheaf $\cat F$ a \emph{hypersheaf} 
if it satisfies 
\begin{equation}\label{sheafcondition}
\cat F(W) \simeq \holim_{I\op} \cat F(U_*) \textrm{ for every hypercover } U_* \textrm{ of every open } W \subset X
\end{equation}
The limit is over $I\op = \cup I_{n}$; we could write it $\holim_{n} \holim_{i \in I_{n}} \cat F(U_n^{(i)})$ which can be considered as $\holim_{n \in \De} \cat F(U_n)$ using the convention above. 
This condition is also called \emph{descent} with respect to hypercovers.
\end{defn}

For the next Lemma we need $\cat M$ to have a certain homotopy enrichment over itself. For simplicity we specialise to $\cat M = \dgCat_{Mor}$.

\begin{lemma}\label{lemma-fibrancy}
Levelwise fibrant hypersheaves are fibrant in the above model structure.\end{lemma}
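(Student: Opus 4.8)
The plan is to verify directly the two conditions characterising a fibrant object of the left Bousfield localization $\cat M^J_{\check\tau}$: it must be projectively fibrant, and it must be local with respect to the generating maps of the localizing sets. Projective fibrancy is levelwise fibrancy, which is assumed, so the whole content is the locality condition. Since $\cat M^J_{\check\tau}$ is obtained from $\cat M^J_{proj}$ by localizing at $H_\tau$ and then at $\check H_\tau$, and since the \v Cech nerve of a cover is a hypercover (one whose coskeleton maps are isomorphisms), the hyperdescent condition (\ref{sheafcondition}) is tailored to all the relevant generators at once --- using the standard equivalence between locality against a covering sieve and descent along its \v Cech nerve. Thus it suffices to show, for an arbitrary hypercover augmentation $\phi\colon \hocolim_I (U_* \cdot k) \to h_W \cdot k$, that
\[\phi^*\colon \Map_{\cat M^J}(h_W \cdot k, \cat F) \to \Map_{\cat M^J}(\hocolim_I (U_* \cdot k), \cat F)\]
is a weak equivalence. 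I would work throughout with the $\sSet$-valued homotopy function complexes of Section \ref{sect-framings}, computed in $\cat M^J_{proj}$; this is legitimate because function complexes are unchanged by left Bousfield localization and $\cat F$ is projectively fibrant.

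First I would identify the source. Evaluation at $W$ is the right Quillen functor $\iota_W^*$ whose left adjoint is $k \mapsto h_W \cdot k$; since $k$ is cofibrant and $\cat F$ fibrant, this Quillen adjunction gives $\Map_{\cat M^J}(h_W \cdot k, \cat F) \simeq \Map_{\cat M}(k, \cat F(W))$ with no reference to the monoidal structure. For the target I would use that mapping out of a homotopy colimit produces a homotopy limit, together with $U_n \cdot k = \amalg_{i \in I_n} h_{U_n^{(i)}} \cdot k$, to obtain
\[\Map_{\cat M^J}(\hocolim_I (U_* \cdot k), \cat F) \simeq \holim_{n}\prod_{i \in I_n}\Map_{\cat M}(k, \cat F(U_n^{(i)})) = \holim_{I\op}\Map_{\cat M}(k, \cat F(U_*)).\]
Because the corepresentable $\Map_{\cat M}(k,-)$ preserves homotopy limits, the right-hand side is $\Map_{\cat M}(k, \holim_{I\op}\cat F(U_*))$. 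Under these identifications $\phi^*$ is exactly $\Map_{\cat M}(k,-)$ applied to the descent comparison map $\cat F(W) \to \holim_{I\op}\cat F(U_*)$, which is a weak equivalence by the hypersheaf hypothesis; as $\Map_{\cat M}(k,-)$ is homotopy invariant, $\phi^*$ is a weak equivalence, and $\cat F$ is local.

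The delicate point --- and the reason the naive computation needs care --- is the failure of $\dgCat$ to be a monoidal model category, so that the $\cat M$-enrichment $\uHom_{\cat M^J}(A,B) = \int_j \uHom(A(j),B(j))$ is not homotopy invariant and cannot be freely derived; I therefore cannot simply write $\uHom(h_W \cdot k, \cat F) \simeq \cat F(W)$ and manipulate it homotopically. The care goes into two places. First, replacing $\uHom$ by the $\sSet$-valued $\Map$ and justifying $\Map_{\cat M^J}(h_W \cdot k, \cat F) \simeq \Map_{\cat M}(k, \cat F(W))$ purely from the Quillen adjunction --- which is sound precisely because $h_W \cdot k$, a coproduct of copies of the cofibrant unit, is genuinely cofibrant, so no uncontrolled tensor of cofibrants intervenes. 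Second, interchanging $\Map_{\cat M}(k,-)$ with the homotopy limit $\holim_{I\op}$ that defines descent (computed, via Lemma \ref{lemma-dgcatholim}, through levelwise Morita-fibrant replacement in $\dgCat_{DK}$); this is the ``substitute for $R\uHom$'' that must be set up so that the function-complex computation of the mapping space out of the homotopy colimit is compatible with the homotopy limit of dg-categories. I expect the main obstacle to be exactly making these two compatibilities rigorous while never invoking the pushout--product axiom that $\dgCat$ lacks.
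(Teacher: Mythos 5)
Your proof is correct, and at the strategic level it matches the paper's: verify $\check H_{\tau}$-locality directly by identifying both mapping spaces with a mapping space out of the unit into (a homotopy limit of) values of $\cat F$, then feed in hyperdescent. But the technical engine for the key identification is genuinely different. The paper copes with the non-derivability of the $\cat M$-enrichment by routing everything through To\"en's internal $R\uHom$: it introduces $R\uHom'(A_\bullet,B_\bullet)=\int^h_V R\uHom(A_V,B_V)$, combines $\Map(A,B)\simeq\Map(\id,R\uHom(A,B))$ with the homotopy-end formula of Lemma \ref{lemma-htpy-end} to get $\Map(A_\bullet,\cat F)\simeq\Map(\id,R\uHom'(A_\bullet,\cat F))$, and then needs an \emph{extra} application of the hypersheaf condition to identify $R\uHom'(h_W\cdot\id,\cat F)\simeq\holim_{V\subset W}\cat F(V)\simeq\cat F(W)$, i.e.\ descent for the cover of $W$ by all its open subsets. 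You instead observe that $h_W\cdot k$ is the left Kan extension of the cofibrant unit along $\{W\}\hookrightarrow J$, so $\Map_{\cat M^J}(h_W\cdot k,\cat F)\simeq\Map_{\cat M}(k,\cat F(W))$ falls out of the evaluation/extension Quillen adjunction unconditionally, with no enrichment and no descent hypothesis at that step. Your route is more elementary, uses the hypersheaf condition exactly once (on the comparison map $\cat F(W)\to\holim_{I\op}\cat F(U_*)$, where it belongs), and would work for any cellular left proper $\cat M$ tensored over sets with cofibrant unit, whereas the paper's stays internal to the dg-world it uses throughout. Both arguments share the same residual soft points --- that $\Map_{\cat M}(k,-)$ commutes with the relevant homotopy limits, and that $H_{\tau}$-locality reduces to the hypercover case via the \v Cech nerve --- and you actually flag the latter more explicitly than the paper does.
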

\begin{proof}
We need to show that for a levelwise fibrant presheaf $\cat F$ the hypersheaf condition on $\cat F$ implies that $\cat F$ is $\check H_{\tau}$-local, i.e.\ that whenever \mbox{$\epsilon\colon \hocolim (U_* \cdot \id) \to h_W \cdot \id$} is in $\check H_{\tau}$ then $\Map(\hocolim( U_* \cdot \id), \cat F) \simeq {\Map(h_W \cdot \id, \cat F)}$. 
We will show that both sides are weakly equivalent to $\Map_{\dgCat_{Mor}}(\id, \cat F(W))$. 

We need a suitable derived hom-space between hypersheaves of dg-categories with values in dg-categories. We define $R\uHom'(A_\bullet, B_\bullet) \coloneqq \int^h_V R\uHom(A_V, B_V)$, where $R\uHom$ is To\"en's internal derived Hom of dg-categories.

First note that 
\[ R\uHom'(h_W \cdot \id, \cat F) \simeq  \int^{h}_{V \subset W} R\uHom(\id, \cat F(V)) \simeq \holim_{V \subset W} \cat F(V)\]
The first weak equivalence holds since $h_W(V)$ is just the indicator function for $V \subset W$ and the second since the homotopy end over a bifunctor that is constant in the first variable degenerates to a homotopy limit, by comparing the diagrams. 
Then we observe $\holim_{V \subset W} \cat F(V) \simeq \cat F(W)$ if $\cat F$ satisfies the hypersheaf condition. 

We claim that this implies $\Map(h_{W} \cdot \id, \cat F) \simeq \Map(\id, \cat F(W))$. 
Note that in $\dgCat$ we have $\Map(A, B) \simeq \Map(\id, R\uHom(A, B))$, see Corollary 6.4 in \cite{Toen06}.
Moreover the mapping space in diagram categories is given by a homotopy end, see Lemma \ref{lemma-htpy-end}.

Putting these together we see $\Map(A_\bullet, B_\bullet) = \int^h_V \Map(\id, R\uHom(A_V, B_V))$.
Then the claim follows since $\Map(\id, -)$ commutes with homotopy limits and hence homotopy ends.

Similarly 
we have 
\begin{align*}
\Map(\hocolim_{i} (U_{i} \cdot \id), \cat F) & \simeq \holim_{i} \Map(\id, R\uHom'(U_i \cdot \id, \cat F)) \\
& \simeq \Map(\id, \holim_{i} \holim_{V \subset U_i} \cat F(V))
\end{align*}
which is $\Map(\id, \cat F(W))$ again by applying the hypersheaf condition twice.
\end{proof}

\begin{rk}\label{rk-enrichment}
The
theory of enriched Bousfield localizations from \cite{Barwick07} says that in the right setting $\cat M^J_{\check \tau}$ is an enriched model category and fibrant objects are precisely levelwise fibrant hypersheaves.
However, this theory requires that we work with a category $\cat M$ that is tractable, left proper and a symmetric monoidal model category with cofibrant unit. The characterization of fibrant objects in particular depends on the enriched hom-space being a Quillen bifunctor.
While $\dgCat_{Mor}$ is left proper and equivalent to a combinatorial and tractable subcategory,  cf. \cite{HolsteinC}, 
it is well-known $\dgCat_{Mor}$ is not symmetric monoidal. Tabuada's equivalent category $Lp$ of localizing pairs has a derivable internal Hom object, but is not a monoidal model category either. In fact, tensor product with a cofibrant object is not left Quillen. Consider the dg-category $\cat S(0)$ that is the linearization of $a \to b$.
The example $\cat S(0) \otimes \cat S(0)$ in $\dgCat$ gives rise to 
\[(\emptyset \subset \cat S(0)) \otimes (\emptyset \subset \cat S(0)) \simeq (\emptyset \subset \cat S(0) \otimes \cat S(0))\]
which is again a tensor product of cofibrant objects that is not cofibrant. Then $\uHom(\cat S(0), -)$ cannot be Quillen either.
\end{rk}

\begin{lemma}\label{lemma-hypercover-we}
Let $\cat M = \dgCat$. Assume that for two presheaves $\cat F$ and $\cat F'$ there is a hypercover $V_*$ on which $\cat F$ and $\cat F'$ agree and which restricts to a hypercover of $W$ for every open $W$. Then $\cat F$ and $\cat F'$ are weakly equivalent in $\cat M^J_{\check \tau}$.
\end{lemma}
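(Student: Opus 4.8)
The plan is to show that $\cat F$ and $\cat F'$ become weakly equivalent after passing to the local model structure by connecting both to a common fibrant model built from the shared hypercover $V_*$. The key observation is that in $\cat M^J_{\check\tau}$ the weak equivalences are the $\check H_\tau$-local weak equivalences, and by the characterization of fibrant objects (Lemma \ref{lemma-fibrancy}) it suffices to test against levelwise fibrant hypersheaves. So rather than comparing $\cat F$ and $\cat F'$ directly, I would produce a zig-zag of local weak equivalences through the hypersheafification, exploiting that the hypersheafification of a presheaf only depends on its values along hypercovers.

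First I would recall that a map $\phi\colon \cat F \to \cat F'$ is a $\check H_\tau$-local weak equivalence precisely when it induces weak equivalences $\Map(\cat F', P) \simeq \Map(\cat F, P)$ for every $\check H_\tau$-local object $P$, i.e.\ every levelwise fibrant hypersheaf. Using the computation from the proof of Lemma \ref{lemma-fibrancy}, the mapping space $\Map(\cat G, P)$ is controlled by the values $P(U_n^{(i)})$ on the representables of hypercovers together with the values $\cat G(V)$; more precisely the hypersheaf condition lets one write everything in terms of the homotopy limits $\holim_{I\op} P(U_*)$. The crucial point is that the hypothesis gives a hypercover $V_*$ along which $\cat F$ and $\cat F'$ literally \emph{agree}, and this $V_*$ restricts to a hypercover of every open $W$. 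This last stability property is exactly what is needed so that the comparison can be made at every $W$ simultaneously, not just at $X$.

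The main technical step is therefore to show that for a hypersheaf $P$, the mapping space $\Map(\cat F, P)$ depends only on the restriction of $\cat F$ to the hypercover $V_*$. I would argue this by the same homotopy-end manipulation as in Lemma \ref{lemma-fibrancy}: writing $\Map(\cat F, P) \simeq \int^h_V \Map(\id, R\uHom(\cat F(V), P(V)))$ and then using descent for $P$ to rewrite the global sections $\cat F(W)$-dependence as a homotopy limit over the hypercover $V_*|_W$. Since $\cat F$ and $\cat F'$ have the same values on every $V_n^{(i)}$, and $V_*|_W$ is a genuine hypercover of each $W$, these homotopy limits coincide, so $\Map(\cat F, P) \simeq \Map(\cat F', P)$ naturally in $P$. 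This identification of mapping spaces against all local objects is precisely the statement that $\cat F$ and $\cat F'$ are $\check H_\tau$-locally weakly equivalent.

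The hard part will be making the comparison natural and simultaneous across all opens $W$: one must be careful that the zig-zag (or the induced map) assembles into an actual morphism or cospan of presheaves inducing the weak equivalence, rather than merely a levelwise coincidence of mapping spaces. The stability hypothesis that $V_*$ restricts to a hypercover of every $W$ is what resolves this, since it lets one apply the hypersheaf condition uniformly; I would verify that the restricted hypercovers $V_*|_W$ indeed satisfy the three conditions in the definition of a hypercover, which is where the local contractibility of $X$ and the fact that $\tau$ is stable under restriction to open subsets come in. Once the mapping-space comparison is established for all levelwise fibrant hypersheaves $P$, the conclusion that $\cat F \simeq \cat F'$ in $\cat M^J_{\check\tau}$ follows immediately from the definition of local weak equivalence.
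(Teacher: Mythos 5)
Your proposal follows essentially the same route as the paper: test against fibrant (hypersheaf) objects $\cat G$, use the descent condition on the target to write $\cat G(W)$ as a (homotopy) limit over the restricted hypercover, and conclude that the mapping-space end over all opens collapses to an end over the sets of $V_*$, where $\cat F$ and $\cat F'$ agree. The step you flag as "the hard part" is resolved in the paper exactly as you suggest --- by observing that any map $Q^i\cat F(W) \to R\cat G(W) = \lim_{V \subset W} R\cat G(V)$ is determined by its components into the $R\cat G(V)$, each of which factors through $Q^i\cat F(V)$ --- so no genuinely different idea is needed.
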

\begin{proof}
We need to show that there is a $\check H_{\tau}$-local equivalence between $\cat F$ and $\cat F'$, i.e.\ $\Map_{\cat M^{J}}(\cat F, \cat G) \simeq \Map_{\cat M^{J}}(\cat F', \cat G)$ for any fibrant $G$.

Specifically, we consider sets $V$ in the hypercover of agreement contained in $W$. Then we know $\Map(\cat F(V), \cat G(V)) \simeq \Map(\cat F'(V), \cat G(V))$. 
To compute $\Map(\cat F, \cat G) = \int^h_W\Map(\cat F(W), \cat G(W))$ note that the homotopy end can be computed as follows:
\[\int^h_W\Map(\cat F(W), \cat G(W)) \simeq \int \Hom((Q^* \cat F)(W), R\cat G(W))\]
Here we use fibrant replacement and a cosimplicial resolution in $\cat M^J$.
But now $\holim_V \cat G(V) = \lim_V R\cat G(V)$ by fibrancy of the diagram $R\cat G$. So it suffices to consider
$\int_W \Hom(Q^* F(W), R\cat G(W))$ where $R\cat G(W) = \lim_{V \subset W} R\cat G(V))$. But an end is just given by the collection of all compatible maps, and every map from $Q^i\cat F(W)$ to $R\cat G(W)$ is determined by the maps from $Q^i\cat F(W)$ to $R\cat G(V)$, which
factor through $Q^i\cat F(V)$. So the end over the $V$ is the same as the end over all $W$ and
\begin{align*}
\Map(\cat F, \cat G) &\simeq \int_V \Hom(Q^* \cat F(V), R\cat G(V)) \\
&\simeq \int_V \Hom(Q^* \cat F'(V), R\cat G(V)) \simeq \Map(\cat F', \cat G)
\end{align*}
This completes the proof.
\end{proof}
\begin{rk}
If $\cat M$ is a symmetric monoidal model category then by Remark \ref{rk-enrichment} fibrant objects are precisely levelwise fibrant hypersheaves and are again determined on a hypercover and Lemma \ref{lemma-hypercover-we} holds again.
\end{rk}

\subsection{Sheafification of constant presheaves}\label{sect-sheafification}

Our aim now is to compute a hypersheafification of the constant presheaf with values in a model category.

Recall that $X$ is a locally contractible topological space and that we have fixed a model category $\cat M$ that is cellular and left proper. We now assume that $\cat M$ is moreover homotopy enriched over itself and has a cofibrant unit. We will also need that the derived internal hom-space commutes with homotopy colimits. 

The example we care about is $\cat M = \dgCat_{Mor}$.
The fact that $\holim R\uHom(A_{i}, B) \simeq R\uHom(\hocolim A_{i}, B)$ in $\dgCat$ follows from Corollary 6.5 of \cite{Toen06}. The one object dg-category $k$ is a cofibrant unit. 

We write $\underline P$ for the constant presheaf with fiber $P \in \cat M$.

First we need to quote two lemmas about comparing homotopy limits.
Given a functor $\iota\colon I \to J$, recall the natural map $e_j\colon (j \downarrow \iota) \to J$ from the undercategory, sending $(i, j \rightarrow \iota(i))$ to $\iota(i)$.
\begin{lemma}\label{lemma-duggersthm-2}
Let $\iota\colon I \to J$ be a
functor between small categories such that for every $j \in J$ the overcategory $(\iota \downarrow j)$ is nonempty with a contractible nerve and let $X\colon J \to \cat M$ be a diagram. Then the map $\holim_{J} X \to \holim_{I} \iota^{*} X$ is a weak equivalence.
\end{lemma}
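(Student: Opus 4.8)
The statement is the standard homotopy-initiality (cofinality) theorem for homotopy limits: the hypothesis says precisely that $\iota$ is homotopy initial, and such functors are preserved by $\holim$. The plan is to reduce the comparison map to a weak equivalence of weights (nerves of comma categories), in the spirit of Dugger's primer and Chapter 19 of \cite{Hirschhorn03}. Concretely, I would express both homotopy limits as weighted limits over the \emph{single} index category $J$ and then exploit that the two relevant weights are objectwise contractible.

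First I would rewrite the homotopy limits via the cotensor machinery of Section \ref{sect-framings}. Using the Bousfield--Kan description, together with Remark \ref{rk-cotensor-holim} and Lemma \ref{lemma-htpy-end} to interpret the homotopy end in the non-simplicial model category $\cat M$, one has
\[ \holim_J X \simeq \int^{h}_{j \in J} (RX)_j^{N(J \downarrow j)}, \qquad \holim_I \iota^* X \simeq \int^{h}_{j \in J} (RX)_j^{N(\iota \downarrow j)}. \]
The first is the usual formula for $\holim_J$ with the overcategory weight $j \mapsto N(J \downarrow j)$. For the second I would rewrite the homotopy limit of the restricted diagram as a weighted limit \emph{over $J$}, the weight being the homotopy left Kan extension along $\iota$ of the terminal weight on $I$; its pointwise value (a bar/co-Yoneda computation) is exactly $j \mapsto N(\iota \downarrow j)$.

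Next I would observe that the natural comparison map $\holim_J X \to \holim_I \iota^* X$ is induced by the evident map of weights $N(\iota \downarrow j) \to N(J \downarrow j)$ sending $(i, \iota i \to j)$ to $(\iota i, \iota i \to j)$. Now both weights are objectwise contractible: the overcategory $(J \downarrow j)$ always has the terminal object $(j, \mathrm{id}_j)$, so $N(J \downarrow j) \simeq *$ for every $j$, while $N(\iota \downarrow j) \simeq *$ is exactly the hypothesis. Hence the map of weights is an objectwise weak equivalence between projectively cofibrant diagrams in $\sSet^{J}$.

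Finally I would conclude by homotopy invariance of the weighted limit in its weight variable: for fibrant $X$, forming the cotensors and the homotopy end carries an objectwise weak equivalence of cofibrant weights to a weak equivalence in $\cat M$, by the $\sSet$-pushout-product behaviour of the cotensor recorded in Section \ref{sect-framings} and the derived-functor formula of Lemma \ref{lemma-htpy-end}. I expect the main obstacle to be precisely this last point: since $\cat M = \dgCat_{Mor}$ is only a model category equipped with a framing rather than a genuine simplicial model category, I must verify that the homotopy end of cotensors really is homotopy invariant in the weight, and that the second displayed formula genuinely computes $\holim_I \iota^* X$. Both reduce to careful bookkeeping with cofibrancy of the comma-category weights and fibrant replacement of $X$, rather than any new idea.
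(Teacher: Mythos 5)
Your argument is correct and is essentially the proof the paper relies on: the paper's own ``proof'' of this lemma is just a citation of Theorems 6.12 and 6.14 of \cite{Dugger} with the remark that those arguments do not depend on the choice of model category, and those arguments are precisely the weight-comparison you spell out (rewrite both homotopy limits as weighted limits over $J$ with the projectively cofibrant weights $N(\iota \downarrow -)$ and $N(J \downarrow -)$, note both are objectwise contractible, and invoke homotopy invariance of the end-of-cotensors construction in the weight variable, as in Chapter 19 of \cite{Hirschhorn03}). The one technical point you flag --- homotopy invariance in the weight when $\cat M$ is only framed rather than simplicial --- is exactly what the framing machinery of \cite{Hirschhorn03} supplies, so no new idea is needed.
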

\begin{lemma}\label{lemma-duggersthm}
Let $\iota\colon I \to J$ be a functor between small categories and let $X\colon J \to \cat M$ a diagram with values in a model category. Suppose that the composition
\[X_j \to \lim_{(j \downarrow \iota)} e_j^*(X) \to \holim_{(j \downarrow \iota)} e_j^*(X)\]
is a weak equivalence for every $j$. Then the natural map $\holim_J X \to \holim_I \iota^*X$ is a weak equivalence.
\end{lemma}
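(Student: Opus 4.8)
The plan is to read the hypothesis as the statement that the derived unit of a Quillen adjunction is an objectwise weak equivalence, and then to push this equivalence through the homotopy limit. Write $\pi_I\colon I \to *$ and $\pi_J\colon J \to *$ for the terminal functors, so that $\pi_I = \pi_J \circ \iota$, and recall that $\holim_J$ and $\holim_I$ are the total right derived functors of $\lim_J = (\pi_J)_*$ and $\lim_I = (\pi_I)_*$. The restriction functor $\iota^*$ sits in an adjunction $\iota^* \dashv \iota_*$ in which $\iota^*$ preserves objectwise weak equivalences and objectwise cofibrations; hence $\iota^*$ is left Quillen and $\iota_*$ is right Quillen, so that the derived functor $\mathbb{R}\iota_*$ exists. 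Its pointwise formula is exactly
\[(\mathbb{R}\iota_* Y)_j \simeq \holim_{(j \downarrow \iota)} p_j^* Y,\]
where $p_j\colon (j \downarrow \iota) \to I$ is the projection and the homotopy right Kan extension is computed as a homotopy limit over the undercategory.

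First I would specialise this to $Y = \iota^* X$. Since $e_j = \iota \circ p_j$ we have $p_j^* \iota^* X = e_j^* X$, so $(\mathbb{R}\iota_* \iota^* X)_j \simeq \holim_{(j \downarrow \iota)} e_j^* X$, and the component at $j$ of the derived unit $\eta\colon X \to \mathbb{R}\iota_* \iota^* X$ is precisely the composite $X_j \to \lim_{(j \downarrow \iota)} e_j^* X \to \holim_{(j \downarrow \iota)} e_j^* X$ that appears in the statement. Thus the hypothesis says exactly that $\eta$ is an objectwise weak equivalence in $\cat M^J$.

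Next I would apply $\holim_J$. Because objectwise weak equivalences are the weak equivalences of the relevant model structure on $\cat M^J$ and $\holim_J = \mathbb{R}\lim_J$ is a right derived functor, the map $\holim_J(\eta)$ is a weak equivalence, giving $\holim_J X \we \holim_J \mathbb{R}\iota_* \iota^* X$. Finally, right derived functors of a composite of right Quillen functors compose, and $\lim_J \circ \iota_* = \lim_I$ by functoriality of right Kan extension together with $\pi_I = \pi_J \circ \iota$; hence $\holim_J \circ \mathbb{R}\iota_* \simeq \holim_I$ and therefore $\holim_J \mathbb{R}\iota_* \iota^* X \simeq \holim_I \iota^* X$. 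A routine mate/triangle-identity check identifies the resulting composite weak equivalence with the canonical comparison map $\holim_J X \to \holim_I \iota^* X$, which is the desired conclusion.

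The main obstacle is the input used in the first paragraph: establishing the pointwise formula for the homotopy right Kan extension $\mathbb{R}\iota_*$ in a general, not necessarily simplicial, model category, and arranging the adjunctions so that they are genuinely Quillen. This is exactly where the framing machinery of Section \ref{sect-framings} enters, and one has two honest options: work in injective model structures on the diagram categories (available when $\cat M$ is combinatorial), or compute all homotopy limits through cosimplicial resolutions as in Remark \ref{rk-cotensor-holim}. In either case the identification of $(\mathbb{R}\iota_* \iota^* X)_j$ with $\holim_{(j \downarrow \iota)} e_j^* X$ and the compatibility of the comparison map with the derived unit both require care; everything after that is formal manipulation of derived adjunctions.
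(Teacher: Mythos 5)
Your argument is essentially correct, but it takes a visibly different route from the paper, whose entire ``proof'' of this lemma is a citation: it invokes Theorems 6.12 and 6.14 of Dugger's primer on homotopy colimits and observes that the arguments there do not depend on the choice of model category. Dugger's proof is a concrete computation: it decomposes the homotopy (co)limit over $J$ via the explicit Bousfield--Kan/bar-construction formula and compares it levelwise with the homotopy (co)limits over the comma categories. You instead package the same content abstractly: the hypothesis becomes the statement that the derived unit of $\iota^* \dashv \iota_*$ is an objectwise weak equivalence, and the conclusion follows from $\holim_J \circ \mathbb{R}\iota_* \simeq \holim_I$. The two proofs rest on the same pivot --- the pointwise formula $(\mathbb{R}\iota_* Y)_j \simeq \holim_{(j \downarrow \iota)} p_j^* Y$ --- but your packaging makes the logical structure transparent at the price of needing $\iota_*$ to be genuinely right Quillen, which (as you note) requires the injective model structures on $\cat M^I$ and $\cat M^J$ and hence, in practice, that $\cat M$ be combinatorial or that one substitute a framing-based construction of the homotopy right Kan extension. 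That caveat is harmless here: the paper already assumes the injective structure exists before Lemma \ref{lemma-htpy-end}, and the intended $\cat M = \dgCat_{Mor}$ is equivalent to a combinatorial category, so your hypotheses are met in every application. One small point to keep honest is the final identification of your composite equivalence with the canonical map $\holim_J X \to \holim_I \iota^* X$; you rightly flag it as a mate/triangle-identity check, and it does need to be the comparison map the paper uses later (e.g.\ in Proposition \ref{propn-sheaf-independent}), but this is routine. In short: correct, more structural than the source it replaces, and conditional on standard existence results that the paper's citation route sidesteps.
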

\begin{proof} [Proofs ]For topological spaces these are Theorems 6.12 and 6.14 of \cite{Dugger} and the proofs 
do not depend on the choice of model category.
\end{proof}

We will also rely on the following results from \cite{Dugger04b}. The first statement is Theorem 1.3 and the second is a corollary of Proposition 4.6 as any basis is a complete open cover.

\begin{propn}\label{propn-hypercover-colim}
Consider a hypercover $U_* \to X$ of a topological space as a simplicial space. Then the maps $\hocolim U_* \to |U_* | \to X$ are weak equivalences in $\mathbf{Top}$.
\end{propn}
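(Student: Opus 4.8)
The plan is to handle the composite $\hocolim U_* \to |U_*| \to X$ one map at a time. For the first map I would exploit that the simplicial space $n \mapsto U_n = \amalg_{i \in I_n} U_n^{(i)}$ is assembled from open subsets of $X$. Each level $U_n$ is a coproduct of open sets, hence cofibrant, so $U_*$ is levelwise cofibrant; moreover its degeneracies are disjoint unions of open inclusions and so are cofibrations, making $U_*$ a good (proper) simplicial space. For such simplicial spaces the homotopy colimit over $\De\op$, the fat realization $\|U_*\|$, and the thin geometric realization $|U_*|$ are all connected by canonical weak equivalences; in particular the natural map $\hocolim U_* \to |U_*|$ is a weak equivalence. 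This part is essentially formal and independent of the hypercover conditions.

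The second map $|U_*| \to X$ carries the real content. First I would dispose of the \v Cech case, where $U_*$ is the \v Cech nerve of an open cover $\{U_i\}$ of $X$. Here the augmentation $|U_*| \to X$ admits a homotopy inverse built from a partition of unity subordinate to the cover, by the classical argument of Segal. Since $X$ is only assumed locally contractible rather than paracompact, I would instead verify that the map is a \emph{weak} homotopy equivalence by testing against maps out of compact spaces (spheres and disks): any such map has compact image, on a neighbourhood of which the cover can be refined to a numerable one, so that the partition-of-unity argument applies locally and detects $\pi_*$.

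For a general hypercover I would induct up the coskeletal tower. The defining conditions say exactly that $U_0 \to X$ and every $U_{n+1} \to (\mathrm{cosk}_n U)_{n+1}$ lie in $\tau$, i.e.\ are covers, so the passage from $\mathrm{cosk}_n U_*$ to $\mathrm{cosk}_{n+1} U_*$ is governed by a \v Cech-type cover in each new simplicial degree. Filtering $|U_*|$ by the realizations of the coskeleta and feeding in the \v Cech case at each stage, the weak equivalence at the base should propagate up the tower through a Reedy/skeletal filtration comparison. The hard part is precisely this last step: transferring the \v Cech result through the coskeletal induction while keeping control of the realization functor, and doing so with no paracompactness hypothesis on $X$. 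This is the substance of Theorem 1.3 of \cite{Dugger04b}, on which I would ultimately rely.
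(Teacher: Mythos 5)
Your proposal is correct and ultimately rests on exactly the same ground as the paper: the paper's entire proof of this proposition is the citation to Theorem 1.3 of \cite{Dugger04b}, which you also identify as the real content and explicitly defer to. Your surrounding sketch (Čech case plus coskeletal induction, with the caveat about avoiding paracompactness) is a reasonable outline of what that theorem involves, so there is nothing to add beyond noting that the intermediate steps you describe as ``essentially formal'' (e.g.\ that the degeneracies are cofibrations) are among the points Dugger--Isaksen have to argue carefully.
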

The colimit here is over the category $\De\op$, but recall that $\hocolim_{\De\op} U_{n} \simeq \hocolim_{I} U_{n}^{i}$. 
\begin{propn}\label{propn-completecover-colim}
Consider a basis $\mathfrak U$ of a topological space $X$ as a simplicial space. Then the map $\hocolim_{U \in \mathfrak U} U \to X$ is a weak equivalence in $\mathbf{Top}$. 
\end{propn}

Let $X$ be locally contractible. Then we can define the (nonempty) set $\{\mathfrak U^s\}_{s\in S}$ of all bases of contractible sets for $X$. 

\begin{defn}\label{defn-sheafification}
Fix a basis of contractible sets $\mathfrak U^s$ for $X$.
Let $\underline P$ be a constant presheaf with fiber $P \in \cat M$ and define a presheaf $\cat L^s_{\underline P}$ 
by 
\[
\cat L^s_{\underline P}(U) = 
\holim_{V \subset U, V \in \mathfrak U^{s}} \underline {RP}(V)\]
where $P \to RP$ is a fibrant replacement in $\cat M$. Denote the natural map by $\la\colon \underline P \to \cat L^s_{\underline P}$. The restriction maps are induced by inclusion of diagrams.
\end{defn}

We will be interested in $ \mathcal L^{s}_{\underline k} \simeq \mathcal L^s_\uChp$.

This construction proceeds via constructing rather large limits, so even the value of $\cat L^s$ on a contractible set is hard to make explicit.

The following lemma is the first step towards showing that our construction does indeed give a hypersheaf.
\begin{lemma}\label{lemma-contractibles}
Consider a constant presheaf $\underline P$ 
 with fibrant fiber $P \in \cat M$ on $Op(X)$.  
 Then on any contractible set $U \subset Op(X)$ we have $\cat L^s_{\underline P}(U) \simeq P$.
\end{lemma}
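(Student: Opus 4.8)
The plan is to recognize $\cat L^s_{\underline P}(U)$ as the homotopy limit of a \emph{constant} diagram and to show that its indexing category is homotopically trivial. Write $\mathcal D = \{V \in \mathfrak U^s \mid V \subset U\}$ for the poset indexing the limit. Since $\underline{RP}$ is the constant presheaf with fibrant fiber $RP \simeq P$ (as $P$ is already fibrant), the diagram $V \mapsto \underline{RP}(V)$ is the constant diagram $c(RP)$ on $\mathcal D$, so by definition $\cat L^s_{\underline P}(U) = \holim_{\mathcal D} c(RP)$. To compare this with $P$ I would apply Lemma \ref{lemma-duggersthm-2} to the unique functor $\iota\colon \mathcal D \to e$, where $e$ denotes the terminal (one-object, one-morphism) category, and to the diagram $X\colon e \to \cat M$ picking out $RP$. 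Then $\iota^* X = c(RP)$ and $\holim_e X \simeq RP$, while the overcategory $(\iota \downarrow \star)$ at the unique object $\star \in e$ is exactly $\mathcal D$. Lemma \ref{lemma-duggersthm-2} then yields that the natural map $RP = \holim_e X \to \holim_{\mathcal D} c(RP) = \cat L^s_{\underline P}(U)$ is a weak equivalence, provided $\mathcal D$ is nonempty with contractible nerve.

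Everything therefore reduces to showing that the nerve $N(\mathcal D)$ is contractible. First, $\mathcal D$ is nonempty and in fact forms a basis for the open subspace $U$, since $\mathfrak U^s$ is a basis for $X$ and the basis elements contained in an open set $U$ form a basis for $U$. Next I would compute $N(\mathcal D)$ as a homotopy colimit: the classifying space $N(\mathcal D) = \hocolim_{\mathcal D} c(\ast)$ of the constant diagram at a point. Because $\mathfrak U^s$ is a basis of \emph{contractible} sets, each $V \in \mathcal D$ is contractible, so the objectwise map $(V \mapsto V) \to c(\ast)$ is a levelwise weak equivalence of diagrams $\mathcal D \to \mathbf{Top}$ and hence induces a weak equivalence $\hocolim_{V \in \mathcal D} V \simeq \hocolim_{\mathcal D} c(\ast) = N(\mathcal D)$. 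Applying Proposition \ref{propn-completecover-colim} to the basis $\mathcal D$ of $U$ gives $\hocolim_{V \in \mathcal D} V \simeq U$, and since $U$ is contractible we conclude $N(\mathcal D) \simeq U \simeq \ast$. Combined with the previous paragraph this gives $\cat L^s_{\underline P}(U) \simeq RP \simeq P$.

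The main obstacle is the identification $N(\mathcal D) \simeq \hocolim_{V \in \mathcal D} V$ together with the correct application of Proposition \ref{propn-completecover-colim}: one must check that restricting the chosen basis $\mathfrak U^s$ to $U$ really produces a basis (a complete cover) of $U$ in the sense required by that proposition, and that the objectwise weak equivalence $V \to \ast$ is natural in $V$ so that homotopy-colimit invariance applies. The remaining ingredients — that a homotopy limit over the terminal category returns the (fibrant) object and that the overcategory in Lemma \ref{lemma-duggersthm-2} is literally $\mathcal D$ — are formal.
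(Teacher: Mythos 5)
Your proof is correct, and it shares the paper's key geometric input --- Proposition \ref{propn-completecover-colim} applied to the basis elements of $\mathfrak U^s$ contained in $U$, together with the contractibility of each $V$ and of $U$ itself, to conclude that the relevant nerve is weakly contractible --- but it closes the argument by a genuinely different route. The paper does not invoke Lemma \ref{lemma-duggersthm-2} here; instead it uses the cotensor and self-enrichment of $\cat M$: the left Quillen functor $K \mapsto N \otimes K$ turns $\hocolim_{\mathfrak U} * \simeq U \simeq *$ into $\hocolim_{\mathfrak U} \underline{\id} \simeq \id$, and then $R\uHom(-,P)$, which sends homotopy colimits to homotopy limits, converts this into $\holim_{\mathfrak U\op} \underline P \simeq R\uHom(\id, P) \simeq P$. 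Your route --- applying the cofinality Lemma \ref{lemma-duggersthm-2} to the projection $\mathcal D \to e$ onto the terminal category, for which the relevant overcategory is $\mathcal D$ itself --- avoids the enrichment hypotheses on $\cat M$ entirely and works in any model category, which is arguably cleaner and more general; the paper's enrichment argument has the compensating advantage of setting up the $R\uHom(\id \otimes \hocolim *, P)$ mechanism that is reused verbatim in the proof of Proposition \ref{propn-havesheaf}. The only point to state more carefully is variance: the diagram in Definition \ref{defn-sheafification} is indexed by $\mathcal D\op$ (the presheaf is contravariant on open sets), so strictly you should apply Lemma \ref{lemma-duggersthm-2} to $\mathcal D\op \to e$; since the nerve of a poset and of its opposite are weakly equivalent, your contractibility argument covers this case as well.
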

\begin{proof}

Consider $\mathfrak U$ as a category. 
We need to show that $\holim_{\mathfrak U\op} \underline P \simeq P$.
The crucial input is that the weak equivalences $V \to *$ give rise to $U \simeq \hocolim_{V \subset \mathfrak U} V \simeq \hocolim_{\mathfrak U} *$ via Proposition \ref{propn-completecover-colim}.

Now consider any $N \in \cat M$ and a cosimplicial resolution $N^{*}$. 
Then we have the functor $K \mapsto N \otimes K$ defined in the introduction which is left Quillen, as is shown in Corollary 5.4.4 of \cite{Hovey07}.
Hence it preserves homotopy colimits and we have: 
\[N = N \otimes \hocolim_{\mathfrak U} * \simeq \hocolim_{\mathfrak U}  (N \otimes *) = \hocolim_{\mathfrak U}  \underline N\]

Finally, we use the fact that $\cat M$ has internal hom-spaces. Replace $N$ above be the cofibrant unit. Then we conclude: 
\begin{align*}
\holim_{U \in {\mathfrak U\op}} \underline P(U) & \simeq \holim_{{\mathfrak U\op}} R\uHom(\id, \underline P(U)) \\
& \simeq \holim_{{\mathfrak U\op}} R\uHom(\underline \id(U), P) \simeq R\uHom(\hocolim_{\mathfrak U}  \underline \id, P) \\
& \simeq R\uHom(\id, P) \simeq P
\end{align*}
In the second line we use the fact that $R\uHom(-, P)$ sends homotopy colimits to homotopy limits.
\end{proof}

\begin{propn}\label{propn-sheaf-independent}
For two choices $\mathfrak U^{t}$ and $\mathfrak U^s$ there is a chain of quasi-isomorphisms between $\cat L^{t}_{\underline P}$ and $ \cat L^{s}_{\underline P}$. Hence there is a presheaf $\cat L_{\underline P}$ well defined in the homotopy category.
\end{propn}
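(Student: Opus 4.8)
The plan is to reduce to nested bases and then reuse the computation in the proof of Lemma~\ref{lemma-contractibles}. First I would note that the union $\mathfrak U^{s} \cup \mathfrak U^{t}$ of two bases of contractible open sets is again a basis of contractible open sets, so it equals $\mathfrak U^{r}$ for some $r \in S$, and it contains both $\mathfrak U^{s}$ and $\mathfrak U^{t}$. For nested bases the index posets $\{V \subseteq U : V \in \mathfrak U^{s}\}$ sit inside $\{V \subseteq U : V \in \mathfrak U^{r}\}$ compatibly with the inclusions $U \subseteq U'$, so restriction of homotopy limits defines natural maps of presheaves $\cat L^{r}_{\underline P} \to \cat L^{s}_{\underline P}$ and $\cat L^{r}_{\underline P} \to \cat L^{t}_{\underline P}$. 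This already produces the desired chain $\cat L^{s}_{\underline P} \leftarrow \cat L^{r}_{\underline P} \to \cat L^{t}_{\underline P}$, and since a levelwise weak equivalence is a weak equivalence in all of the (localised) model structures in play, it suffices to show that for a nested pair $\mathfrak U^{s} \subseteq \mathfrak U^{r}$ the restriction map is a weak equivalence at every open $U$.

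To do this I fix $U$ and observe that restricting a basis of contractible sets to the members contained in $U$ gives a basis of contractible sets for $U$. Hence Proposition~\ref{propn-completecover-colim}, together with the contractibility of each $V$, identifies both $\hocolim_{V \subseteq U,\, V \in \mathfrak U^{s}} \underline{\id}$ and $\hocolim_{V \subseteq U,\, V \in \mathfrak U^{r}} \underline{\id}$ with $\id \otimes U$, compatibly with the map induced by the inclusion of index diagrams. Running the chain of equivalences from the proof of Lemma~\ref{lemma-contractibles} but stopping before the contractibility step, I get for every open $U$ a natural identification
\[
\cat L^{s}_{\underline P}(U) \;\simeq\; R\uHom\Big(\hocolim_{V \subseteq U,\, V \in \mathfrak U^{s}} \underline{\id},\ RP\Big),
\]
and likewise for $\mathfrak U^{r}$, under which the restriction map becomes $R\uHom(-, RP)$ applied to the comparison $\hocolim_{\mathfrak U^{s}|_{U}} \underline{\id} \to \hocolim_{\mathfrak U^{r}|_{U}} \underline{\id}$. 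This comparison is a weak equivalence by the previous sentence, and since $R\uHom(-, RP)$ sends homotopy colimits to homotopy limits it preserves it. Thus $\cat L^{r}_{\underline P}(U) \to \cat L^{s}_{\underline P}(U)$ is a weak equivalence, as required.

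The main obstacle is the bookkeeping in the second step: one must check that the chain of equivalences realising $\cat L^{s}_{\underline P}(U) \simeq R\uHom(\hocolim \underline{\id}, RP)$ is natural in $U$ and that, under it, the homotopy-limit restriction map corresponds precisely to $R\uHom(-, RP)$ of the inclusion-induced map on homotopy colimits, rather than merely matching the two homotopy types abstractly. Concretely this amounts to verifying that $\hocolim_{\mathfrak U^{s}|_{U}} \underline{\id} \to \hocolim_{\mathfrak U^{r}|_{U}} \underline{\id}$ fits in a commuting triangle with the two weak equivalences to $\id \otimes U$ furnished by Proposition~\ref{propn-completecover-colim}, which forces it to be a weak equivalence and simultaneously supplies the naturality in $U$. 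Once this is in place, well-definedness of $\cat L_{\underline P}$ in the homotopy category follows, since any two choices of basis are joined through their union by the zigzag constructed above.
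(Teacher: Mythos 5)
Your proof is correct, and the first step coincides with the paper's: both reduce to the case of nested bases by passing through the union $\mathfrak U^{s}\cup\mathfrak U^{t}$. For the key step, however, you take a genuinely different route. The paper stays on the homotopy-limit side and applies the cofinality criterion of Lemma \ref{lemma-duggersthm}: for the inclusion of index posets it suffices to check that for each element $U_{i}$ of the coarser basis the homotopy limit over the overcategory (the elements of the finer basis contained in $U_{i}$) recovers $P$, which is exactly Lemma \ref{lemma-contractibles} since those elements form a basis of contractibles for the contractible set $U_{i}$. You instead dualize via $\holim \underline{RP}\simeq R\uHom(\hocolim\underline{\id},RP)$ and reduce to the topological statement that both nerves compute $U$ via Proposition \ref{propn-completecover-colim} --- precisely the strategy the paper itself deploys later in the proof of Proposition \ref{propn-havesheaf}. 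The paper's argument is lighter on bookkeeping: the only input is Lemma \ref{lemma-contractibles} at contractible opens, and one never needs the hom--tensor identification to be natural in $U$. Your argument costs you the naturality verification you flag (which does go through, since both augmentations to $U$ are induced by the inclusions $V\hookrightarrow U$ and one concludes by two-out-of-three), but it buys a stronger intermediate statement: an identification of $\cat L^{s}_{\underline P}(U)$ with $R\uHom(\id\otimes\Sing U,RP)$ for \emph{every} open $U$, not just a zigzag of equivalences --- which in effect anticipates the comparison with $\oo$-local systems in Theorem \ref{thm-monodromy}.
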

\begin{proof}
By considering the union of of $\mathfrak U^{s}$ and $\mathfrak U^{t}$ 
it suffices to show the result if $\mathfrak U^{t}$ is a subcover of $\mathfrak U^{s}$. By Lemma \ref{lemma-duggersthm} it then suffices to fix $U_{i} \in \mathfrak U^{t}$ and check that $\holim_{i/\iota} \underline P \simeq P$ where $\iota$ is the natural inclusion map.
But the arrow category stands for the opposite of the category of all the elements of $\mathfrak U^{s}$ contained in $U_i$. These form a basis and hence the homotopy limit is given be Lemma \ref{lemma-contractibles}.
\end{proof}

\begin{propn}\label{propn-havesheaf}
For any choice of $\mathfrak U^{s}$ the presheaf $\cat L^s_{\underline P}$ is fibrant, i.e.\ it is $\check H$-local. 
\end{propn}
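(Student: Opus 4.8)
The plan is to verify the two hypotheses of Lemma \ref{lemma-fibrancy}, that $\cat L^s_{\underline P}$ is levelwise fibrant and that it is a hypersheaf, and then to invoke that lemma. Levelwise fibrancy is immediate: by Definition \ref{defn-sheafification} each value $\cat L^s_{\underline P}(U)$ is a homotopy limit, and homotopy limits are fibrant objects of $\cat M$. So the entire content is the hypersheaf condition $\cat L^s_{\underline P}(W) \simeq \holim_{n \in \De} \cat L^s_{\underline P}(U_n)$ for every hypercover $U_*$ of every open $W \subset X$.

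First I would extend the internal-hom computation of Lemma \ref{lemma-contractibles} from contractible sets to an arbitrary open $V' \subset X$. Writing $\mathfrak U^s_{V'} \coloneqq \{V \in \mathfrak U^s : V \subset V'\}$, the same manipulation as in that lemma gives
\[\cat L^s_{\underline P}(V') \simeq R\uHom\!\left(\hocolim_{V \in \mathfrak U^s_{V'}} \underline{\id},\, RP\right),\]
the only difference being that now $\hocolim_{\mathfrak U^s_{V'}} \underline{\id}$ need not be contractible. Applying this to $W$ and to each open piece $U_n^{(i)}$ of the hypercover, and using that $R\uHom(-, RP)$ sends homotopy colimits to homotopy limits (in particular coproducts to products, so that $\cat L^s_{\underline P}(U_n) = \prod_i \cat L^s_{\underline P}(U_n^{(i)}) \simeq R\uHom(\coprod_i \hocolim_{\mathfrak U^s_{U_n^{(i)}}} \underline{\id}, RP)$), I would rewrite both sides of the desired equivalence as $R\uHom(-, RP)$ applied to a homotopy colimit of constant diagrams at the cofibrant unit:
\[\cat L^s_{\underline P}(W) \simeq R\uHom\!\left(\hocolim_{\mathfrak U^s_W}\underline{\id},\,RP\right), \qquad \holim_{n}\cat L^s_{\underline P}(U_n) \simeq R\uHom\!\left(\hocolim_{n}\coprod_i \hocolim_{\mathfrak U^s_{U_n^{(i)}}}\underline{\id},\,RP\right).\]

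It then suffices to identify the two homotopy colimits inside $R\uHom$. As in the proof of Lemma \ref{lemma-contractibles}, a homotopy colimit of the constant diagram $\underline{\id}$ is $\id \otimes |N(\cdot)|$ for the nerve of the indexing poset, with $\id$ cofibrant, so the comparison reduces to a weak equivalence of simplicial sets. On the one hand $|N(\mathfrak U^s_W)| \simeq W$ by Proposition \ref{propn-completecover-colim}, since $\mathfrak U^s_W$ is a basis of contractibles for $W$; on the other hand $|N(\mathfrak U^s_{U_n^{(i)}})| \simeq U_n^{(i)}$ levelwise for the same reason, whence $\hocolim_{n}\coprod_i |N(\mathfrak U^s_{U_n^{(i)}})| \simeq \hocolim_{\De\op} U_* \simeq W$ by Proposition \ref{propn-hypercover-colim}. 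Thus both indexing nerves model the homotopy type of $W$, so the two homotopy colimits are weakly equivalent, and applying $R\uHom(-, RP)$ yields the hypersheaf condition.

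The main obstacle is bookkeeping rather than a single hard step: I must check that the equivalences produced above are compatible with the canonical restriction map $\cat L^s_{\underline P}(W) \to \holim_n \cat L^s_{\underline P}(U_n)$, so that the weak equivalence I exhibit really is the comparison map of the hypersheaf condition. This amounts to observing that all of the identifications with $W$ are realized by the augmentations induced by the inclusions $V \subset W$, which are natural in both the simplicial and the covering variables; alternatively, one can phrase the same comparison of homotopy types through Lemmas \ref{lemma-duggersthm-2} and \ref{lemma-duggersthm}. A secondary point requiring care is the interchange of the iterated homotopy colimit over $\De\op$ with the passage through $R\uHom$ and with the reduction to nerves; this is handled by the standing hypotheses on $\cat M$ (homotopy enrichment over itself, cofibrant unit, and $R\uHom$ carrying homotopy colimits to homotopy limits) together with the fact that the homotopy colimit over $\De\op$ preserves levelwise weak equivalences.
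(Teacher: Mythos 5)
Your proposal is correct and follows essentially the same route as the paper: verify the hypotheses of Lemma \ref{lemma-fibrancy}, express the values of $\cat L^s_{\underline P}$ as $R\uHom$ out of homotopy colimits of the constant diagram at the cofibrant unit as in Lemma \ref{lemma-contractibles}, and reduce the hypersheaf condition to the topological comparison supplied by Propositions \ref{propn-completecover-colim} and \ref{propn-hypercover-colim}. The paper streamlines the bookkeeping you worry about at the end by first arranging (via a sub-basis) that every element of $\mathfrak U^s$ lies in some piece of the hypercover, so the comparison is realized by an actual map of diagrams rather than by matching both sides to $W$ separately.
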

\begin{proof}
By Lemma \ref{lemma-fibrancy}, it is enough to show $\cat L_{\underline P}^s$ is levelwise fibrant (immediate from definition) and satisfies the hypersheaf condition.

Given
a hypercover $\{W_i\}_{i \in I}$ of $U$ we may assume that any element of $\mathfrak U^{s}$ is a subset of one of the $W_{i}$.
Then we consider for every $i$ the basis of contractibles $\mathfrak U^{s(i)}$ for $W_{i}$ of elements of $\mathfrak U^{s}$ that are contained in $W_{i}$.
We obtain the following:
\[\holim_{I\op} \cat L^s_{\underline P}(W_i) \simeq \holim_{i \in I\op} \holim_{U \in \mathfrak U^{s(i)}{}\op} \underline P(U) \leftarrow \holim_{U \in \mathfrak U^{s}{}\op} \underline P(U)\]
And our aim is to show the arrow on the right is a weak equivalence.

By considering $R\uHom(\id \otimes \hocolim *, P)$ as in the proof of Lemma \ref{lemma-contractibles} 
it suffices to show $\hocolim_{i \in I} \hocolim_{V \in \mathfrak U^{s(i)}} V \to \hocolim_{U \in \mathfrak U^{s}}U$ is a weak equivalence. But if we apply Proposition \ref{propn-completecover-colim} this is weakly equivalent to $\hocolim_{i \in I} W_{i} \to X$, 
which is a weak equivalence by Proposition \ref{propn-hypercover-colim}.
\end{proof}

If $X$ is locally contractible 
then it has a basis of contractible open sets. Moreover one can associate a hypercover to any basis. For details on the construction see 
Section 4 of \cite{Dugger04b} and note that a basis is a complete cover. 

\begin{propn}\label{propn-lp-we}
If $\cat P$ is 
constant then the natural map $\cat P \to \cat L_{\cat P}$ is a weak equivalence of presheaves. 
\end{propn}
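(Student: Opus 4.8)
The plan is to show that the natural map $\lambda\colon \underline P \to \cat L_{\underline P}$ of Definition \ref{defn-sheafification} is a $\check H_{\tau}$-local weak equivalence, using the characterization recalled earlier: $\lambda$ is a local weak equivalence precisely when the induced map $\Map_{\cat M^J}(\cat L_{\underline P}, \cat G) \to \Map_{\cat M^J}(\underline P, \cat G)$ is a weak equivalence for every fibrant (i.e.\ $\check H_{\tau}$-local) $\cat G$. Since $X$ is locally contractible I first fix a basis of contractible open sets $\mathfrak U^s$ and pass to the associated hypercover; because a basis of contractibles restricts to a basis of contractibles on any open $W \subset X$, this hypercover restricts to a hypercover of every open set, so it is of the kind required by Lemma \ref{lemma-hypercover-we}.

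The first step is to check that $\lambda$ is a weak equivalence on every contractible set of the cover. On such a $V \in \mathfrak U^s$ the component $\lambda_V$ is the composite $\underline P(V) = P \to RP \to \holim_{V' \subset V,\, V' \in \mathfrak U^s} \underline{RP}(V') = \cat L_{\underline P}(V)$, where the first map is the fibrant replacement $P \to RP$ and the second is the canonical map from the constant diagram into the homotopy limit. The fibrant replacement is a weak equivalence by definition, and Lemma \ref{lemma-contractibles} (applied to the fibrant fiber $RP$) identifies the target with $RP$; one checks that this identification is the one induced by the structure map, so $\lambda_V$ is a weak equivalence.

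The second step is to propagate this levelwise equivalence to a local weak equivalence, which amounts to re-running the proof of Lemma \ref{lemma-hypercover-we} for the map $\lambda$ rather than for an abstract pair of presheaves. Concretely, for fibrant $\cat G$ one computes $\Map_{\cat M^J}(\cat F, \cat G) \simeq \int^h_W \Map(\cat F(W), \cat G(W))$ and, as in that proof, the fibrancy (hypersheaf property) of $\cat G$ lets one replace this end over all $W$ by the end over the sets $V$ of the hypercover, so that it depends only on the values $\cat F(V)$. Since $\lambda_V$ is a weak equivalence for every such $V$, the map $\Map_{\cat M^J}(\cat L_{\underline P}, \cat G) \to \Map_{\cat M^J}(\underline P, \cat G)$ is a weak equivalence for all fibrant $\cat G$, and hence $\lambda$ is a local weak equivalence.

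The main obstacle is the bookkeeping in the first step: Lemma \ref{lemma-contractibles} only asserts an abstract weak equivalence $\cat L_{\underline P}(V) \simeq RP$, so the real work is verifying that this equivalence is compatible with the specific natural map $\lambda$, and then adapting Lemma \ref{lemma-hypercover-we} from the statement ``two presheaves agreeing on a hypercover are weakly equivalent'' to the refined statement that the given map $\lambda$, inducing levelwise equivalences on the hypercover, is itself a local weak equivalence.
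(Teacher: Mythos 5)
Your proof is correct and follows essentially the same route as the paper's: identify $\cat L_{\underline P}$ with $P$ on the contractible sets via Lemma \ref{lemma-contractibles}, then reduce to the hypercover of contractibles via Lemma \ref{lemma-hypercover-we}. The paper's own proof is terser and simply invokes Lemma \ref{lemma-hypercover-we} as stated; your observation that the lemma's hypothesis of ``agreement'' must really be relaxed to ``the given map $\lambda$ is a levelwise weak equivalence on the hypercover,'' and that its proof goes through in that generality, is a legitimate refinement of a point the paper glosses over.
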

\begin{proof} To show that $\cat L$ resolves $\cat P$ it is enough to observe that $\cat L_{\cat P}(U) \simeq \cat P$ for contractible $U$ by Lemma \ref{lemma-contractibles}. Now the contractible opens give rise to a hypercover on which $\cat P$ and $\cat L_{\cat P}$ agree and that restricts to a hypercover on every open set. By Lemma \ref{lemma-hypercover-we} that suffices to prove the proposition.
\end{proof}

With Proposition \ref{propn-lp-we} we can compute $R\Ga(X, \underline P)$ as $\cat L_{\underline P}(X)$. Note that since we have not used functorial factorization this is not a functor on the level of model categories but only on the level of homotopy categories.

\begin{defn}
We will call a cover in $(\mathbf{Set}^{Op(X)\op},\tau)$ a \emph{good cover} if all its elements and all their finite intersections are contractible. Correspondingly a \emph{good hypercover} is a hypercover such that all its open sets $U_{n}^{(i)}$ are contractible. 
\end{defn} 

We will now consider a good hypercover $\{U_i\}_{i \in I}$. 
For computations it is easier not to consider the full simplicial presheaf given by open sets in the cover but only the semi-simplicial diagram of nondegenerate open sets, obtained by leaving out identity inclusions. 

\begin{thm}\label{thm-cohomology-holim}
Let $U_{*} \to h_X$ be a good hypercover of a topological space $X$. Let $\underline P$ be a constant presheaf on $X$. 
Then $R\Gamma(X, \underline P) \simeq \holim_{I\op} P \simeq \holim_{I_{0}\op} P$ where $I_{0}$ indexes the distinct contractible sets of $U_{*}$. 
\end{thm}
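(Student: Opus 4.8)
The plan is to identify $R\Gamma(X,\underline P)$ with the explicit hypersheafification $\cat L^s_{\underline P}$ and then feed the good hypercover into descent. By Proposition \ref{propn-havesheaf} the presheaf $\cat L^s_{\underline P}$ is $\check H$-local, hence fibrant, and by Proposition \ref{propn-lp-we} the structure map $\la\colon \underline P \to \cat L^s_{\underline P}$ is a local weak equivalence; together these say that $\cat L^s_{\underline P}$ is a fibrant replacement of $\underline P$, so $R\Gamma(X,\underline P) \simeq \cat L^s_{\underline P}(X)$. Since $\cat L^s_{\underline P}$ is a hypersheaf it satisfies descent along the good hypercover $U_*$, which in particular is a hypercover of $W = X$; this gives $\cat L^s_{\underline P}(X) \simeq \holim_{I\op} \cat L^s_{\underline P}(U_*)$.

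Next I would collapse this homotopy limit to that of a constant diagram. Because the hypercover is good, every open set $U_n^{(i)}$ is contractible, so Lemma \ref{lemma-contractibles} gives $\cat L^s_{\underline P}(U_n^{(i)}) \simeq P$. The point is to promote these levelwise equivalences to an equivalence of $I\op$-diagrams: restricting the map of presheaves $\la$ to the open sets of $U_*$ yields a morphism of diagrams $\underline P(U_*) \to \cat L^s_{\underline P}(U_*)$ which is an objectwise weak equivalence, again by Lemma \ref{lemma-contractibles}. As $\underline P$ is the constant presheaf, its restriction to $U_*$ is literally the constant diagram with value $P$ and identity transition maps, so $\holim_{I\op}\underline P(U_*) = \holim_{I\op} P$. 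Homotopy limits preserve objectwise weak equivalences, and chaining the equivalences of the previous paragraph then delivers the first claim, $R\Gamma(X,\underline P) \simeq \holim_{I\op} P$.

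It remains to replace the full simplex category $I$ by the subcategory $I_0$ of distinct, nondegenerate contractible sets, and this is where I expect the only real work. The homotopy limit of a \emph{constant} diagram depends on the indexing category only through its classifying space, so it suffices to see that the inclusion $\iota\colon I_0 \hookrightarrow I$ is homotopy cofinal for limits; I would verify this via Lemma \ref{lemma-duggersthm-2}, checking that for each simplex $u$ of $I$ the relevant over-category of nondegenerate simplices mapping to $u$ is nonempty with contractible nerve. Concretely this is the Eilenberg--Zilber phenomenon that every simplex degenerates from a unique nondegenerate one, which also shows $BI_0 \simeq BI \simeq |U_*| \simeq X$, so that the degenerate sets contribute nothing to the homotopy limit of the constant diagram $c_P$. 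Applying Lemma \ref{lemma-duggersthm-2} to $\iota$ and $c_P$ then yields $\holim_{I\op} P \simeq \holim_{I_0\op} P$, completing the chain. The main obstacle is exactly this cofinality check: contractibility of the over-categories (equivalently, homotopy initiality of the nondegenerate simplices) is the one genuinely combinatorial ingredient, everything else being formal consequences of the machinery already in place.
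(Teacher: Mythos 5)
Your argument is correct and follows essentially the same route as the paper: fibrant replacement by $\cat L^s_{\underline P}$ (via Propositions \ref{propn-havesheaf} and \ref{propn-lp-we}), descent along the good hypercover, Lemma \ref{lemma-contractibles} to collapse the diagram to the constant one with value $P$, and Lemma \ref{lemma-duggersthm-2} to pass from $I$ to $I_0$. The only cosmetic difference is in the final cofinality check, where the paper simply observes that every object of $I$ is isomorphic to one of $I_0$ (so the relevant comma categories are trivial), whereas you invoke the Eilenberg--Zilber decomposition; both yield the required contractibility.
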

\begin{proof}
We consider a fibrant replacement $\cat L_{\underline P}$ as in Definition \ref{defn-sheafification}. Let $I$ index the connected open sets of $U_{*}$.  
Then we have:
\begin{align*}
R\Ga(X, \underline P) &\simeq \cat L_{ \underline P}(X) 
\simeq \holim \cat L_{\underline P}(U_*) \\
& \simeq \holim_{I\op} \cat L_{\underline P}(U_n^{(i)})  \simeq \holim_{I\op} P
\end{align*}
Here we use Lemma \ref{lemma-contractibles} to identify $\cat L_{\underline P}(U_n^{(i)})$ and $P$.
Now consider $\iota\colon I_{0}\op \subset I\op$ and note that all the overcategories $\iota \downarrow i$ are trivial (any $i \in I$ is isomorphic to some $j \in I_{0}$) so by Lemma \ref{lemma-duggersthm-2} we have
\[R\Ga(X, \underline U) \simeq \holim_{I_{0}\op} P \qedhere\]
\end{proof}

\begin{rk}
Note that we can of course take the hypercover associated to a \v Cech cover in this theorem.
In fact, since we are concerned with locally constant presheaves we could also compute in the \v Cech model structure, but considering hypercovers simplifies the theory. 
\end{rk}

We conclude this section with some results on functoriality.

\begin{lemma}\label{lemma-Leray-Serre}
Let $f\colon X \to Y$ be a continuous map and let $\underline P_{X \textrm{ or } Y}$ denote the constant presheaf with fiber $P$ on $X$ or $Y$. 
Then $R\Ga(X, \underline P) \simeq R\Ga(Y, Rf_*(\underline P))$.
\end{lemma}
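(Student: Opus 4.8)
The plan is to read this as the Grothendieck--Leray statement that the right derived functor of a composite of right Quillen functors is the composite of their right derived functors, applied to the factorisation of global sections through $f_*$. Nothing here will use that the presheaf is constant, so I would prove the identity $R\Ga(X, -) \simeq R\Ga(Y, Rf_*(-))$ and then specialise.

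First I would record the strict identity that drives everything. With $\pi_X\colon X \to *$ and $\pi_Y\colon Y \to *$ we have $\pi_X = \pi_Y \circ f$, hence on inverse images of sites $\pi_X\inv = f\inv \circ \pi_Y\inv$; since the pushforwards are the associated precomposition functors, which are contravariant, this gives the \emph{strict} equality $(\pi_X)_* = (\pi_Y)_* \circ f_*$ of functors $\cat M^{Op(X)} \to \cat M$. Evaluated at the top open set this is just the classical $\Ga(X, \cat C) = \Ga(Y, f_*\cat C)$, before deriving anything.

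Next I would note that all functors in sight are right Quillen for the \emph{local} model structures: $f_*$ and $(\pi_Y)_*$ are right Quillen by the pushforward lemma (a continuous map induces a morphism of sites preserving covers and hypercovers), hence so is the composite $(\pi_X)_*$. By construction $R\Ga(X, \cat C) = (\pi_X)_*(\cat C^{\#})$ with $\cat C^{\#}$ a fibrant replacement in $\cat M^{Op(X)}_{\check\tau}$, i.e.\ $R\Ga(X, -)$ is the right derived functor $R(\pi_X)_*$, and likewise for $Y$. The heart of the matter is then formal: because a right Quillen functor preserves fibrant objects, $Rf_*\cat C = f_*(\cat C^{\#})$ is already fibrant in $\cat M^{Op(Y)}_{\check\sigma}$, so computing $R(\pi_Y)_*$ on it requires no further fibrant replacement up to weak equivalence (Ken Brown's lemma, since $(\pi_Y)_*$ preserves weak equivalences between fibrant objects). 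Combined with the strict identity this yields
\[
R\Ga(Y, Rf_*\cat C) \simeq (\pi_Y)_*\bigl(f_*(\cat C^{\#})\bigr) = (\pi_X)_*(\cat C^{\#}) = R\Ga(X, \cat C),
\]
which is exactly the general composition theorem for right derived functors, cf.\ \cite{Hovey07}; taking $\cat C = \underline P$ gives the lemma.

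I expect the only genuine point to check --- the main obstacle, though it is really bookkeeping rather than mathematics --- to be that $f_*$ sends local-fibrant presheaves on $X$ (the levelwise-fibrant hypersheaves of Lemma \ref{lemma-fibrancy}) to local-fibrant presheaves on $Y$, so that the middle term $f_*(\cat C^{\#})$ is fibrant and no intermediate resolution is lost; but this is precisely the content of $f_*$ being right Quillen for the $\check\tau$- and $\check\sigma$-localised structures, which has already been supplied. Since $R\Ga$ is only well defined on homotopy categories, I would phrase the whole argument there, and remark that the proof nowhere uses constancy of $\underline P$, so the statement in fact holds for an arbitrary presheaf of dg-categories on $X$.
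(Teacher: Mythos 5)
Your argument is correct and is essentially the paper's own proof, just written out in full: the paper likewise derives the claim from the strict identity $\pi_{Y,*}\circ f_* = \pi_{X,*}$ together with the fact that all three pushforwards are right Quillen for the local model structures, so the derived functor of the composite is the composite of the derived functors. Your additional observations (that $f_*(\cat C^{\#})$ is already fibrant so no intermediate resolution is needed, and that constancy of $\underline P$ plays no role) are accurate and consistent with the paper.
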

\begin{proof}
The fact that $R\Ga \circ Rf_* \simeq R\Ga$ follows immediately from $\pi_{Y,*} \circ f_* = \pi_{X,*}$ and the fact that all these maps preserve fibrations.
\end{proof}

\begin{lemma}
With notation as above
there is a functor from $R\Ga(Y, \underline P_Y)$ to $R\Ga(X, \underline P_X)$.
\end{lemma}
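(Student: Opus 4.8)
The plan is to construct the categorified restriction map in cohomology, which ought to be contravariant in the space, by pushing the constant presheaf forward along $f$ and invoking Lemma \ref{lemma-Leray-Serre}. The key starting observation is that the underived pushforward $f_* = (f\inv)^*$ carries $\underline P_X$ to $\underline P_Y$ on the nose: for an open $V \subset Y$ one has $(f_*\underline P_X)(V) = \underline P_X(f\inv(V)) = P$, and since every restriction map of a constant presheaf is an identity, $f_*\underline P_X = \underline P_Y$ as presheaves on $Y$. Recall that $f_*$ is right Quillen for the local model structures precisely because $f\inv$ preserves covers and hypercovers, as established above for any continuous $f$.

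Next I would produce the derived comparison map. By Propositions \ref{propn-havesheaf} and \ref{propn-lp-we} the sheafification map $\la\colon \underline P_X \to \cat L_{\underline P_X}$ is a fibrant replacement in $\cat M^J_{\check\tau}$, so that $Rf_*\underline P_X \simeq f_*\cat L_{\underline P_X}$. Applying $f_*$ to $\la$ and using the identification of the previous paragraph yields a canonical morphism
\[
\underline P_Y = f_*\underline P_X \xrightarrow{\; f_*\la \;} f_*\cat L_{\underline P_X} \simeq Rf_*\underline P_X
\]
in $Ho(\cat M^{Op(Y)\op}_{\check\sigma})$. Note that $f_*\la$ is generally not a weak equivalence — $f_*$ need not preserve the weak equivalence $\la$ since its source is not fibrant — which is exactly as it should be, mirroring the fact that the classical restriction $H^*(Y)\to H^*(X)$ need not be invertible.

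Then I would apply derived global sections on $Y$. Since $R\Ga(Y,-) = R(\pi_Y)_*$ is a functor on the homotopy category, the morphism above induces
\[
R\Ga(Y, \underline P_Y) \longrightarrow R\Ga(Y, Rf_*\underline P_X),
\]
and Lemma \ref{lemma-Leray-Serre} identifies the target with $R\Ga(X, \underline P_X)$. Composing gives the asserted morphism $R\Ga(Y, \underline P_Y) \to R\Ga(X, \underline P_X)$ in $Ho(\dgCat_{Mor})$, i.e.\ a functor of dg-categories well defined up to quasi-equivalence.

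The argument is essentially formal once the earlier results are in place, so there is no deep obstacle; the one point that needs care is that derived global sections is only functorial on the level of homotopy categories. As noted after Proposition \ref{propn-lp-we}, the absence of functorial factorization means $\cat L_{\underline P}(X)$ is not a model-categorical functor, so what we obtain is a morphism in $Ho(\dgCat_{Mor})$ rather than a strict dg-functor on the nose. The only genuinely space-dependent input, that $f_*$ is right Quillen for the local structures, rests on $f\inv$ preserving hypercovers, which holds for every continuous map.
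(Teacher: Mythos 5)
Your proposal is correct and follows essentially the same route as the paper: both identify $\underline P_Y$ with $f_*\underline P_X$ levelwise, push forward the fibrant replacement map to get $\underline P_Y \to f_*\cat L_{\underline P_X} \simeq Rf_*\underline P_X$, and then apply derived global sections together with Lemma \ref{lemma-Leray-Serre}. Your added remarks (that $f_*\la$ need not be an equivalence, and that the construction only lives in the homotopy category absent functorial factorizations) are accurate but not a departure from the paper's argument.
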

\begin{proof}
$\Ga$ is a covariant functor. From Lemma \ref{lemma-Leray-Serre} we have a natural weak equivalence $R\Ga(Y, Rf_*(\underline P_X))\to R\Ga(X, \underline P_X)$.

Let $\underline P_{\bullet} \to \cat P_{\bullet}$ be a fibrant replacement. It is then enough to construct a map $f^\bullet\colon \cat P_Y \to Rf_*(\cat P_X)$ of hypersheaves on $Y$. On any open set $U$ this is given by $\underline P_Y(U) = P \to f_* \underline P_X(U) \to f_* \cat P_X(U)$. \end{proof}

\begin{rk}
With $P = \Chp$ this gives functoriality for Morita cohomology if we use functorial factorizations.

Note that our computation using good covers is not functorial unless we pick compatible covers. 
However, if $X$ and $Y$ have bases of contractible sets which are suitably compatible there is a natural comparison map between homotopy limits. 
\end{rk}

\begin{rk} The results of the last sections relied on the assumption that $\dgCat$ is left proper, which is only the case if $k$ is of flat dimension zero.

Nevertheless, one can consider the question of what Morita cohomology should be over other ground rings and it is sensible to use $\Ga(X, \cat L_{\uChp})$ 
as our \emph{definition} of Morita cohomology if $k$ has positive flat dimension. All pertinent results then still apply, in particular Theorem \ref{thm-cohomology-holim}, and we can prove equivalence with the category of $\oo$-local systems in Section \ref{sect-infinity}. 
\end{rk}

\section{Infinity-local systems}\label{ch-ils}

\subsection{Singular cohomology with coefficients in $\dgCat$}\label{sect-infinity}

We will now consider the categorification of singular cohomology, given by the dg-category of $\oo$-local systems. Here we consider dg-categories over an arbitrary commutative ring $k$.

Recall from \ref{sect-framings} that while the model categories on $\dgCat$ are not simplicial,
there is a bifunctor $\sSet\op \times \dgCat_{DK} \to \dgCat_{DK}$ that induces a
natural $Ho(\sSet)$ cotensor action on $Ho(\dgCat_{DK \textrm{ or } Mor})$. We write this as $(K, \cat D) \mapsto \cat D^K$.

\begin{defn}
We define the \emph{dg-category of $\oo$-local systems} on a simplicial set $K$ as $\Chp^K$. We write $\cat Y(K)$ for $\Chp^K$. For a topological space $X$ we recall the (unpointed) singular simplicial set $\Sing(X)$ and define $\cat Y(X) \coloneqq \cat Y(\Sing(X))$. We also define $\cat Y^{u}(K) = \Chd^{K}$ and $\cat Y^{u}(X) = \Chd^{\Sing(X)}$.
\end{defn}
\begin{rk}
We are using the Dwyer--Kan model structure for simplicity, but of course we think of $\Chp$ as a Morita fibrant replacement of $k$ and one can show that $\cat Y(K)$ is weakly equivalent to $k^{K}$ as constructed in $\dgCat_{Mor}$, cf. the proof of Theorem \ref{thm-monodromy}.
\end{rk}

As we will mainly consider topological spaces via the functor $\Sing$ in this section we restrict attention to compactly generated Hausdorff spaces so that $\Sing$ is part of a Quillen equivalence.

\begin{lemma}\label{lemma-y-quillen}
The functor $K \mapsto \cat Y(K)$ is a left Quillen functor from $\sSet$ to $\dgCat_{DK}\op$ with right adjoint given by $\Map(-, \Chp)$,
\end{lemma}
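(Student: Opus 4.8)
The plan is to recognize $\cat Y = \Chp^{(-)}$ as the tensoring-with-a-cosimplicial-resolution functor, but carried out in the opposite model category $\dgCat_{DK}\op$, and then to invoke the framing result already used in the proof of Lemma \ref{lemma-contractibles}. A functor $\sSet \to \dgCat_{DK}\op$ is left Quillen precisely when it sends cofibrations and acyclic cofibrations in $\sSet$ to fibrations and acyclic fibrations in $\dgCat_{DK}$, read backwards. Fixing a simplicial resolution $(\Chp)_{*}$ of $\Chp$, the cotensor $\Chp^{K} = \lim_{n}\bigl(\prod_{K_{n}} (\Chp)_{n}\bigr)$ is, by definition, the image of $\Chp$ under tensoring with $K$ computed inside $\dgCat_{DK}\op$, since a simplicial resolution of $\Chp$ in $\dgCat_{DK}$ is exactly a cosimplicial resolution of $\Chp$ in the opposite category. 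As every object of $\dgCat_{DK}$ is fibrant (the hom-complexes are automatically fibrant in $\Ch$), $\Chp$ is cofibrant in $\dgCat_{DK}\op$, so the dual of Corollary 5.4.4 of \cite{Hovey07} applies verbatim and yields that $K \mapsto \Chp^{K}$ is a left Quillen functor $\sSet \to \dgCat_{DK}\op$.

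First I would pin down the adjunction on the nose, for the fixed resolution. The cotensor/mapping-space adjunction recalled in \ref{sect-framings} specializes, with the second variable fixed to $\Chp$, to a natural bijection
\[\Hom_{\dgCat}(\cat D, \Chp^{K}) \cong \Hom_{\sSet}(K, \Hom^{\bullet}(\cat D, (\Chp)_{*})),\]
where $\Hom^{\bullet}(\cat D, (\Chp)_{*})$ is the simplicial set with $n$-simplices $\Hom_{\dgCat}(\cat D, (\Chp)_{n})$. Reading the left-hand side as $\Hom_{\dgCat\op}(\Chp^{K}, \cat D)$ exhibits $\cat D \mapsto \Hom^{\bullet}(\cat D, (\Chp)_{*})$ as a strict right adjoint to $\cat Y$. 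On cofibrant $\cat D$ this simplicial set is exactly the derived mapping space $\Map(\cat D, \Chp)$, which justifies writing the right adjoint as $\Map(-, \Chp)$; note that although the full two-variable construction of \ref{sect-framings} is not a Quillen adjunction of two variables, this causes no difficulty once the fibrant object $\Chp$ and one of its resolutions are fixed.

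For the Quillen property I would argue directly, to make the mechanism transparent. As a left adjoint $\cat Y$ preserves colimits, so it suffices to check that it sends the generating cofibrations $\partial\Delta^{n} \hookrightarrow \Delta^{n}$ and generating acyclic cofibrations $\Lambda^{n}_{k} \hookrightarrow \Delta^{n}$ of $\sSet$ to cofibrations, resp. acyclic cofibrations, of $\dgCat_{DK}\op$. Applying $\Chp^{(-)}$ turns these into the maps $\Chp^{\Delta^{n}} \to \Chp^{\partial\Delta^{n}}$ and $\Chp^{\Delta^{n}} \to \Chp^{\Lambda^{n}_{k}}$ in $\dgCat_{DK}$, which we must show are fibrations, resp. acyclic fibrations. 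But $\Chp^{\Delta^{n}} = (\Chp)_{n}$ and $\Chp^{\partial\Delta^{n}} = M_{n}(\Chp)_{*}$ is the $n$-th matching object, so the first map is precisely the $n$-th matching map of the simplicial resolution $(\Chp)_{*}$, which is a fibration because $(\Chp)_{*}$ is Reedy fibrant. The horn version is handled the same way: $\Chp^{\Lambda^{n}_{k}}$ is the corresponding relative matching object, and the defining property of a simplicial resolution, namely Reedy fibrant replacement of the constant diagram, makes $\Chp^{\Delta^{n}} \to \Chp^{\Lambda^{n}_{k}}$ an acyclic fibration.

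The main obstacle is bookkeeping rather than a genuine difficulty: one must carefully separate the single-variable statement, which is Quillen, from the two-variable cotensor action, which (as stressed in \ref{sect-framings}) is not, and confirm that fixing the resolution $(\Chp)_{*}$ once and for all makes $\cat Y$ an honest functor with an honest right adjoint. The only input that could fail in a less favourable setting is the fibrancy of the chosen object; here it is automatic, so $\Chp$ is cofibrant in $\dgCat_{DK}\op$ and the resolution can be taken levelwise Reedy fibrant. Independence of the resulting Quillen functor, up to the appropriate equivalence, from the choice of resolution follows from the standard comparison of simplicial resolutions, exactly as in \ref{sect-framings}.
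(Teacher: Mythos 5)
Your argument is correct and is essentially the paper's: the paper's proof is a citation of Theorems 16.4.2 and 16.5.7 of \cite{Hirschhorn03}, and what you have written is precisely an unpacking of those two results in this special case (the cotensor against a simplicial resolution $(\Chp)_{*}$ viewed as a tensor in $\dgCat_{DK}\op$ using that $\Chp$ is fibrant, the strict adjunction with $\cat D \mapsto \Hom^{\bullet}(\cat D, (\Chp)_{*})$, and the reduction to generating (acyclic) cofibrations identified with (relative) matching maps). The one step you assert rather than prove --- that $\Chp^{\Delta^{n}} \to \Chp^{\Lambda^{n}_{k}}$ is an acyclic fibration because $(\Chp)_{*}$ is a Reedy fibrant replacement of a constant diagram --- is itself part of the cited Hirschhorn material, so nothing is missing relative to the paper's own proof.
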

\begin{proof} 
This follows for example from Theorems 16.4.2 and 16.5.7 in \cite{Hirschhorn03}.
\end{proof}

As all simplicial sets are cofibrant we obtain the following corollaries:
\begin{cor}\label{lemma-infinity-htpy}
The functor $K \mapsto \cat Y(K)$ preserves weak equivalences.
\end{cor}
\begin{cor}
The functor $K \mapsto \cat Y(K)$ sends homotopy colimits to homotopy limits.
\end{cor}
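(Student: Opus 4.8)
The plan is to deduce this directly from Lemma~\ref{lemma-y-quillen}, using only the general principle that a left Quillen functor commutes with homotopy colimits, together with the observation that a homotopy colimit computed in an opposite model category is a homotopy limit in the original.

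First I would recall that by Lemma~\ref{lemma-y-quillen} the functor $\cat Y\colon \sSet \to \dgCat_{DK}\op$ is left Quillen, and since every object of $\sSet$ is cofibrant its total left derived functor is $\cat Y$ itself, with no cofibrant replacement needed, exactly as already used for Corollary~\ref{lemma-infinity-htpy}. The key general fact I would invoke is that the left derived functor of a left Quillen functor preserves homotopy colimits; this is standard, following for instance from the compatibility of derived adjunctions with homotopy (co)limits, or from the fact that $L\cat Y$ on homotopy categories is assembled from a genuine colimit-preserving left adjoint. Applying this to a diagram $\{K_i\}$ in $\sSet$ gives $\cat Y(\hocolim_i K_i) \simeq \hocolim_i \cat Y(K_i)$, where the homotopy colimit on the right is taken in $\dgCat_{DK}\op$.

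The final step is purely formal: a homotopy colimit over a diagram in $\dgCat_{DK}\op$ is, by the definition of the opposite model structure, the same as a homotopy limit over the opposite diagram in $\dgCat_{DK}$. Rewriting the right-hand side accordingly yields $\cat Y(\hocolim_i K_i) \simeq \holim_i \cat Y(K_i)$ in $\dgCat_{DK}$, which is precisely the claim.

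There is no serious obstacle here; the statement is essentially a reformulation of ``left Quillen functors preserve homotopy colimits'' under the opposite-category convention. The only point requiring a moment's care is the bookkeeping of the direction of arrows when passing to $\dgCat_{DK}\op$, so that the indexing diagram for the homotopy limit is correctly identified as the one obtained from the original colimit diagram.
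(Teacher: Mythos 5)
Your argument is exactly the one the paper intends: the corollary is stated immediately after Lemma \ref{lemma-y-quillen} with the remark that all simplicial sets are cofibrant, so the left Quillen functor $\cat Y\colon \sSet \to \dgCat_{DK}\op$ preserves homotopy colimits, which in the opposite category are homotopy limits. Your write-up is correct and matches the paper's (implicit) proof.
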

Since $\Sing$ sends cofibrations of topological spaces to cofibrations in $\sSet$ the lemma also holds for $\cat Y\colon \mathbf{CGHauss} \to \dgCat_{DK}\op$. Moreover, as $\Sing$ is a Quillen equivalence it preserves weak equivalences and homotopy colimits.
Then the last result can be interpreted as a Mayer--Vietoris theorem:
\[\cat Y(U \cup V) \simeq \cat Y(U) \times_{\cat Y(U \cap V)} \cat Y(V)\]

This definition of $\oo$-local systems looks a little indirect. But note that an $\oo$-local systems does provide us with an object of $(\Chp)_{n}$ for every $n$-simplex of $K$. One can consider an explicit simplicial resolution $(\Chp)_{*}$ as constructed in \cite{HolsteinC} to see this is the data one would expect.

Section \ref{sect-loop-rep}
will provide a more explicit way of looking at $\oo$-local systems, but first we show that $\oo$-local systems are equivalent to Morita cohomology.

Fix a topological space $X$ with a good hypercover $\{U_i\}_{i \in I}$. 

\begin{thm}\label{thm-monodromy}
The dg-categories $\cat H^{M}(X)$ and $\cat Y(X)$ are isomorphic in $Ho(\dgCat_{DK})$. 
\end{thm}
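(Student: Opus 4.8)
The plan is to identify both dg-categories with the single homotopy limit $\holim_{I\op}\Chp$ indexed by the good hypercover, and then conclude. For $\cat H^M(X)$ this is almost immediate: Theorem~\ref{thm-cohomology-holim} gives $\cat H^M(X)\simeq\holim_{I\op}\Chp$, a homotopy limit taken in $\dgCat_{Mor}$. Since $\Chp$ is Morita-fibrant (it is the triangulated hull of $k$), Lemma~\ref{lemma-dgcatholim} lets me recompute this limit in $\dgCat_{DK}$: Morita-fibrant replacement preserves homotopy limits and fixes the constant diagram $\Chp$ up to weak equivalence, so the Morita and Dwyer--Kan homotopy limits of $\Chp$ over $I\op$ agree.

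For $\cat Y(X)$ I would use that $\cat Y$ converts homotopy colimits of spaces into homotopy limits of dg-categories. By Proposition~\ref{propn-hypercover-colim} the good hypercover presents $X$ as $\hocolim U_*$ in $\mathbf{Top}$; applying $\cat Y$, which preserves weak equivalences (Corollary~\ref{lemma-infinity-htpy}) and sends homotopy colimits to homotopy limits, yields $\cat Y(X)\simeq\holim \cat Y(U_*)$. Because $U_n=\amalg_{i\in I_n}U_n^{(i)}$ is a disjoint union, $\cat Y(U_n)\simeq\prod_{i\in I_n}\cat Y(U_n^{(i)})$, and since each $U_n^{(i)}$ is contractible we get $\cat Y(U_n^{(i)})\simeq\cat Y(*)\simeq\Chp$. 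Reindexing the resulting double homotopy limit $\holim_{\De\op}\cat Y(U_n)$ over the category of simplices $I=\cup_n I_n$ then produces $\cat Y(X)\simeq\holim_{I\op}\Chp$, now computed in $\dgCat_{DK}$.

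Comparing the two computations identifies both $\cat H^M(X)$ and $\cat Y(X)$ with the Dwyer--Kan homotopy limit $\holim_{I\op}\Chp$, which gives the desired isomorphism in $Ho(\dgCat_{DK})$. The main obstacle I anticipate is the bookkeeping between the two model structures: $R\Ga$ is computed in $\dgCat_{Mor}$ whereas the cotensor defining $\cat Y$ is set up in $\dgCat_{DK}$, so the argument depends essentially on Lemma~\ref{lemma-dgcatholim} to ensure that the two homotopy limits of the levelwise Morita-fibrant constant diagram $\Chp$ coincide. A secondary point is the passage from the cosimplicial homotopy limit over $\De\op$ to $\holim_{I\op}\Chp$ over the category of simplices, together with the reduction to distinct contractible sets; I would handle this with Lemmas~\ref{lemma-duggersthm-2} and~\ref{lemma-duggersthm} exactly as in the proof of Theorem~\ref{thm-cohomology-holim}.
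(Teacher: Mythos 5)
Your proof is correct, and its overall strategy coincides with the paper's: both arguments reduce $\cat H^{M}(X)$ and $\cat Y(X)$ to the single homotopy limit $\holim_{I\op}\Chp$ over the index category of a good hypercover, with Theorem \ref{thm-cohomology-holim} handling the first identification. Where you diverge is in the mechanism for the $\cat Y(X)$ half. The paper stays on the simplicial-set side: it forms the simplicial set $I_{*}\colon n\mapsto I_{n}$, uses the contractibility of the $U_n^{(i)}$ together with the free-degeneracy and realization results of \cite{Dugger04b} to show $I_{*}\simeq \Sing X$, and then invokes Remark \ref{rk-cotensor-holim} to rewrite $\Chp^{I_{*}}$ as $\holim_{\De I_{*}\op}(\Chp)_{n}$, observing that $\De I_{*}$ is literally the category $I$. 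You instead work on the space level, feeding $X\simeq\hocolim U_{*}$ into the fact that $\cat Y$ is (opposite to) left Quillen, evaluating levelwise via contractibility, and then performing a Fubini-type reindexing of the double homotopy limit $\holim_{\De}\prod_{I_{n}}\Chp$ onto $I\op$. Both mechanisms are sound; the paper's avoids the explicit Fubini step by identifying $\De I_{*}$ with $I$ outright, while yours avoids having to prove $I_{*}\simeq\Sing X$. Two small points in your favour: you make explicit the role of Lemma \ref{lemma-dgcatholim} in reconciling the Morita homotopy limit of Theorem \ref{thm-cohomology-holim} with the Dwyer--Kan one, which the paper leaves implicit (cf.\ the remark following the definition of $\cat Y$), and your levelwise identifications $\cat Y(U_n^{(i)})\simeq\Chp$ do assemble into a map of diagrams because they are induced by the natural maps $U_n^{(i)}\to *$ --- worth stating, since a merely objectwise equivalence would not suffice to compare the homotopy limits.
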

\begin{proof}
By Proposition \ref{propn-hypercover-colim} there is a weak equivalence \mbox{$\hocolim U_{*} \simeq X$}. Let $I = \cup I_{n}$ by the indexing category. Then we can consider the data of the category $I$ as a simplicial set $n \mapsto I_{n}$ with the induced face and degeneracy maps, or in fact as a simplicial space where every $I_{n}$ is considered as a discrete space.
Then we can consider the comparison map from $U_{n}$ to $\amalg_{I_{n}} *$ sending every connected open to a distinct point to get $\hocolim_{\De} U_{n} \simeq \hocolim_{\De} I_{n}$ where we take homotopy colimits of simplicial spaces. 
Then
 $I_{*}$ considered as a simplicial space has free degeneracies in the sense of Definition A.4 in \cite{Dugger04b}. Hence we can apply Theorem 1.2 of \cite{Dugger04b} and find $|I_{*}| \simeq \hocolim_{\De} I_{n}$. So the simplicial set $I_{*}$ is weakly equivalent to $\Sing X$ and it suffices to analyse $\Chp^{I_{*}}$. 

Hence by Remark \ref{rk-cotensor-holim} and Theorem \ref{thm-cohomology-holim} we are left to compare $\holim_{I\op} \Chp$ and $\holim_{\De I_{*}\op} (\Chp)_{n}$. But the category $I$ is exactly the category of simplices of the simplicial set $I_{*}$ and the weak equivalences $\Chp \to (\Chp)_{n}$ induce a weak equivalence of homotopy limits.
\end{proof}

\begin{rk}
This argument still applies if we replace $\Chp$ by any other dg-category $P$. Hence we know that $R\Ga(X, \underline P) \simeq P^{\Sing K}$. For example $R\Ga(X, \Chd) \simeq \cat Y^{u}(X)$. \end{rk}

\begin{cor}
The functor $X \mapsto \cat H^M(X)$ is homotopy invariant and sends homotopy colimits to homotopy limits.
\end{cor}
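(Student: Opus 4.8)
The plan is to deduce both properties by transporting them across the equivalence of Theorem \ref{thm-monodromy}, using that $\cat Y(X) = \Chp^{\Sing X}$ is by construction the composite of $\Sing\colon \mathbf{CGHauss} \to \sSet$ with the cotensor functor $K \mapsto \cat Y(K)$ of Lemma \ref{lemma-y-quillen}. Since Theorem \ref{thm-monodromy} identifies $\cat H^M(X)$ with $\cat Y(X)$ in $Ho(\dgCat_{DK})$, it suffices to verify that $X \mapsto \cat Y(X)$ is homotopy invariant and carries homotopy colimits to homotopy limits; both are already visible in the discussion following Lemma \ref{lemma-y-quillen}.

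For homotopy invariance I would argue that a weak equivalence $f\colon X \to Y$ in $\mathbf{CGHauss}$ induces a weak equivalence $\Sing f$ of simplicial sets, because $\Sing$ underlies a Quillen equivalence and every space is fibrant, so it preserves all weak equivalences. Then Corollary \ref{lemma-infinity-htpy} gives that $\cat Y$ preserves weak equivalences of simplicial sets, whence $\cat Y(f)$ is a weak equivalence, and Theorem \ref{thm-monodromy} yields $\cat H^M(X) \simeq \cat H^M(Y)$.

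For the homotopy colimit statement I would first invoke that $\Sing$ sends homotopy colimits in $\mathbf{CGHauss}$ to homotopy colimits in $\sSet$, which holds because $\Sing$ is part of a Quillen equivalence (its derived functor is an equivalence of homotopy categories inverse to $\mathbb L|-|$, and a Quillen equivalence preserves homotopy colimits, as already noted in the text). The corollary to Lemma \ref{lemma-y-quillen} asserting that $\cat Y\colon \sSet \to \dgCat_{DK}\op$ sends homotopy colimits to homotopy limits then applies, so the composite $X \mapsto \cat Y(\Sing X)$ sends homotopy colimits to homotopy limits; Theorem \ref{thm-monodromy} transfers this to $\cat H^M$.

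The main obstacle will be naturality: Theorem \ref{thm-monodromy} is stated as an isomorphism for a fixed $X$ equipped with a chosen good hypercover, whereas transporting a functorial property requires the identification $\cat H^M(X) \simeq \cat Y(X)$ to be natural in $X$. The cleanest way to sidestep this is to treat $\cat Y(X)$ as the working model for $\cat H^M(X)$ rather than transporting properties one object at a time, so that the two corollaries about $\cat Y$ give the result immediately; the only naturality then needed is at the level of homotopy categories, which is precisely what Theorem \ref{thm-monodromy} supplies together with the functoriality of the cotensor construction on $Ho(\sSet)$.
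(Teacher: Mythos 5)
Your proposal is correct and follows exactly the paper's own route: the paper's proof is precisely ``immediate from Theorem \ref{thm-monodromy} and the topological versions of Lemma \ref{lemma-y-quillen} and its corollaries,'' i.e.\ transporting homotopy invariance and the colimit-to-limit property of $\cat Y = \Chp^{\Sing(-)}$ across the comparison theorem. Your additional remark about naturality is a fair observation but does not change the argument.
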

\begin{proof}
This is immediate from Theorem \ref{thm-monodromy} and the topological versions of Lemma \ref{lemma-y-quillen} and its corollaries.
\end{proof}
\begin{defn}
With this equivalence in mind we can define the \emph{Morita homology} $\cat H_M(K)$ of a simplicial set $K$ as $\Chdp \otimes K$. 
\end{defn}
Note, however, that computing this involves a cosimplicial resolution in dg-categories which looks difficult to produce. 

\subsection{Loop space representations}\label{sect-loop-rep}

In this section we move from the rather abstract action of simplicial sets on $\dgCat$ to representations of a dg-algebra.

For this we will have to move between $\dgCat$ and the category of linear simplicial categories. 
First recall that the natural smart truncation functor $\nneg$ from $\Ch$ to $\Ch_{\geq 0}$ extends to a functor from $\dgCat$ to $\dgCat_{\geq 0}$, which we also denote $\nneg$. This functor is right Quillen with left adjoint the inclusion functor.

Further recall the category $\sModCat_k$ of categories  enriched over simplicial \mbox{$k$-modules} and the natural Dold--Kan or Dold--Puppe functor $DK\colon \dgCat_{\geq 0} \to \sModCat$ that is defined hom-wise. $DK$ and its left adjoint $N$, normalization, form a Quillen equivalence between non-negatively graded dg-categories and $\sModCat$. 
For details see section 2.2 of \cite{Simpson05} or \cite{Tabuada10}. 

In V.5 of \cite{Goerss99} explicit looping and delooping functors for simplicial sets and simplicial groupoids are constructed. For arbitrary simplicial sets there is a functor $G\colon \sSet \to \mathbf{sGpd}$ with right adjoint $\overline W$. Together they form a Quillen equivalence.
The obvious composition with the normalization functor
$Nk G\colon \sSet \to \dgCat_{DK}$ is left Quillen. Essentially this lets us consider a simplicial set as a dg-category.
The restriction of $G$ to simplicial sets with a single vertex is a Quillen equivalence with simplicial groups.

Next we consider the enriched hom-space $R\uHom(\cat D, \Chd)$ of dg-categories.
As in the introduction, for any dg-category $\cat D$ we consider $L(\cat D\mods)$, where $L$ just restricts to the quasi-equivalent subcategory of fibrant and cofibrant objects. Let us write this as $\Chd^{\cat D}$. 
We note that this is quasi-equivalent to $R\uHom(\cat D, \Chd)$.  This is immediate from the definition if $\cat D$ is cofibrant. Otherwise consider a cofibrant replacement $j: \cat D \to Q\cat D$ and note that $\cat D\mods$ and $Q\cat D\mods$ are Quillen equivalent via $j^{*}$ by the results of Section 4.1 in \cite{Toen07a}. This shows that the comparison map of underlying dg-catgeories is quasi-essentially surjective. Moreover $j^{*}$ is compatible with shifts, so the equivalence of homotopy categories implies that the hom-spaces of $\cat D\mods$ and $Q\cat D\mods$ are quasi-isomorphic and this proves that $j^{*}$ is a quasi-equivalence.

Similarly write $\Chp^{\cat D}$ for the subcategory of $\Chd^{\cat D}$ consisting of objects whose underlying complexes are all perfect over $k$. This subcategory is preserved by $j^{*}$ and its adjoint so we have $\Chp^{\cat D} \simeq \Chp^{Q\cat D}$ and hence $\Chp^{\cat D} \simeq R\uHom(\cat D, \Chp)$.

\begin{thm}
For a simplicial set $K$ the dg-categories $\Chp^{K}$ and $
\Chp^{{NkGK}}$ are quasi-equivalent, as are $\Chd^{K}$ and $\Chd^{NkGK}$.
\end{thm}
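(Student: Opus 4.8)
The plan is to identify both sides with the homotopy limit $\holim_{\De K\op}\Chp$ of the constant diagram indexed by (the opposite of) the category of simplices $\De K$, and then to invoke that a weak equivalence in $\dgCat_{DK}$ is a quasi-equivalence.

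The left-hand side is essentially immediate from the machinery of Section \ref{sect-framings}. By Remark \ref{rk-cotensor-holim} the cotensor is computed as $\Chp^{K} = \holim_{\De K\op}(\Chp)_{n}$, where $(\Chp)_{\bullet}$ is a simplicial resolution of $\Chp$. Since a simplicial resolution is in particular a levelwise weak equivalence $\Chp \to (\Chp)_{n}$, this gives a natural weak equivalence $\Chp^{K} \simeq \holim_{\De K\op}\Chp$ onto the constant diagram, and identically $\Chd^{K} \simeq \holim_{\De K\op}\Chd$.

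For the right-hand side I would first decompose $K$ as a homotopy colimit of its simplices. Writing $\De K$ for the category of simplices (objects $\sigma\colon \De^{n}\to K$), one has the standard weak equivalence $K \simeq \hocolim_{\De K}\De^{\bullet}$, and since each $\De^{n}$ is contractible the collapse maps $\De^{n}\to \De^{0}=*$ exhibit a natural levelwise weak equivalence onto the constant point-diagram. As $NkG\colon \sSet \to \dgCat_{DK}$ is left Quillen it preserves weak equivalences (all simplicial sets are cofibrant) and homotopy colimits, so $NkGK \simeq \hocolim_{\De K} NkG(\De^{n}) \simeq \hocolim_{\De K} NkG(*)$. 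A direct computation gives $NkG(*)\simeq k$: here $G(*)$ is the trivial simplicial groupoid, its $k$-linearization is the one-object simplicial $k$-module category with endomorphisms the constant simplicial module $k$, and normalizing returns the one-object dg-category $k$. Hence $NkGK \simeq \hocolim_{\De K} k$. Now I use the identification $\Chp^{\cat D}\simeq R\uHom(\cat D,\Chp)$ established just above the theorem, together with the fact that $R\uHom(-,\Chp)$ sends homotopy colimits in $\dgCat$ to homotopy limits (Corollary 6.5 of \cite{Toen06}, exactly as used in Section \ref{sect-sheafification}). This yields $\Chp^{NkGK} \simeq R\uHom(\hocolim_{\De K} k,\Chp) \simeq \holim_{\De K\op} R\uHom(k,\Chp) = \holim_{\De K\op}\Chp$, matching the left-hand computation. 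The argument for $\Chd$ is identical, using that $R\uHom(-,\Chd)$ likewise converts homotopy colimits into homotopy limits.

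The step I expect to be the main obstacle is the bookkeeping that makes the two homotopy limits the \emph{same} diagram rather than merely abstractly equivalent objects: one must check that the weak equivalences $NkG(\De^{n})\simeq k$ and $(\Chp)_{n}\simeq\Chp$ are both induced by the collapse maps $\De^{n}\to *$, so that after applying $R\uHom(-,\Chp)$ the right-hand side genuinely lands on the constant diagram $\De K\op \to \dgCat_{DK}$ with value $\Chp$, in the same way as the cotensor. A secondary point, already granted by the cited result of To\"en, is that $R\uHom(-,\Chp)$ commutes past the potentially large homotopy colimit $\hocolim_{\De K} k$ and not merely past finite ones.
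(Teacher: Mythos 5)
Your proof is correct, but it takes a genuinely different route from the paper's. The paper argues by Yoneda: it shows $\Map_{\dgCat_{DK}}(\cat D, \Ch^{K}) \simeq \Map_{\dgCat_{DK}}(\cat D, \Ch^{NkGK})$ for every test dg-category $\cat D$, computing the left side by the cotensor adjunction and the right side by running through the chain of adjunctions $\otimes^{L} \dashv R\uHom$, $\iota \dashv \nneg$, $N \dashv DK$, $k \dashv U$, $G \dashv \overline W$, until both sides become $\Map_{\sSet}(K,-)$ applied to two simplicial sets, which are then identified by evaluating each on a point. You instead decompose $K \simeq \hocolim_{\De K}\De^{n}$, push this through the left Quillen functor $NkG$ to get $NkGK \simeq \hocolim_{\De K} k$, and apply $R\uHom(-,\Chp)$ to land on $\holim_{\De K\op}\Chp$, which Remark \ref{rk-cotensor-holim} identifies with the cotensor $\Chp^{K}$ as well. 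Both arguments are sound and lean on the same external inputs (To\"en's tensor--Hom adjunction, $G\ast\simeq\ast$, the identification $\Chp^{\cat D}\simeq R\uHom(\cat D,\Chp)$). The paper's version buys a comparison that is manifestly natural in the test object $\cat D$ and makes the role of the Quillen equivalences $G\dashv\overline W$ and Dold--Kan transparent; yours is more geometric, reduces the simplicial-groupoid input to the single computation $NkG(\ast)\simeq k$, and has the pleasant side effect of exhibiting both categories as the homotopy limit $\holim_{\De K\op}\Chp$ that already appears in Theorem \ref{thm-monodromy}, so it meshes directly with the comparison to Morita cohomology. One remark on your flagged worry: for the statement as given (a quasi-equivalence, i.e.\ an isomorphism in $Ho(\dgCat_{DK})$) it is enough that both sides are weakly equivalent to a common object, and since both of your identifications land on the honest constant diagram $\De K\op\to\dgCat_{DK}$ with value $\Chp$ (the collapse maps $\De^{n}\to\ast$ and the resolution maps $\Chp\to(\Chp)_{n}$ are each natural transformations from or to the constant diagram), no further bookkeeping is needed; extra care would only be required if you wanted the equivalence to be natural in $K$ or compatible with a specific comparison map.
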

\begin{proof}
The proofs for $\Chd^{(-)}$ and $\Chp^{(-)}$ are identical, so let us abusively write $\Ch$ for both.

By the Yoneda embedding it is enough to prove 
\[\Map_{\dgCat_{DK}}(\cat D, \Ch^{K}) \simeq \Map_{\dgCat_{DK}}(\cat D, \Ch^{NkGK})\]
for arbitrary dg-categories $\cat D$. (In fact an isomorphism of connected components of the mapping space would be enough.)

The left-hand side is $\Map_{\sSet}(K, \Map_{\dgCat}(\cat D, \Ch))$ by the usual adjunction. Meanwhile, for the right-hand side we have the following computation. We use the adjunctions $\otimes^{L} \dashv R\uHom$, $\iota \dashv \nneg$ (inclusion and truncation), $N \dashv DK$ (Dold--Kan), $k \dashv U$ (free and forgetful) and $G \dashv \overline W$ (looping and delooping). For legibility we contract $DK \circ \nneg$ to $DK$ and suppress $\iota$ \mbox{and $U$}. 
\begin{align*}
\Map(\cat D, \Ch^{NkGK}) 
& \simeq \Map_{\dgCat_{DK}}(NkGK, R\uHom(\cat D, \Ch)) \\ 
& \simeq \Map_{\dgCat_{DK}}(NkGK, \nneg(R\uHom(\cat D, \Ch))) & \textrm{as LHS  $\subset Im(\nneg)$}\\
& \simeq \Map_{\sModCat}(k GK, DK(R\uHom(\cat D, \Ch))) \\
& \simeq \Map_{\sCat}(GK, DK(R\uHom(\cat D, \Ch)))  \\
& \simeq \Map_{\mathbf{sGpd}}(GK, DK(R\uHom(\cat D, \Ch))) & \textrm{as LHS is a groupoid} \\
& \simeq \Map_{\sSet}(K, \overline W(DK(R\uHom(\cat D, \Ch))))
\end{align*}
Hence it suffices to show that $\overline W (DK ( R\uHom(\cat D, \Ch)))$ is weakly equivalent to $\Map(\cat D, \Ch) = \Map(\id, R\uHom(\cat D, \Ch))$. Since any simplicial set $K$ is weakly equivalent to $\Map(*, K)$ we consider the following.
\begin{align*}
\Map_{\sSet}( *, \overline W ( DK ( R\uHom(\cat D, \Ch)))) &\simeq \Map_{\mathbf{sGpd}}(*, DK(R\uHom(\cat D, \Ch))) \\
& \simeq \Map_{\sModCat}(\id, DK(R\uHom(\cat D, \Ch))) \\
& \simeq \Map_{\dgCat_{{DK}}}(\id, R\uHom(\cat D, \Ch))
\end{align*}
Here we 
use some of the same observations as before and note moreover that $G* \simeq *$, 
the trivial simplicial groupoid. Here the unit $\id$ is the one object category with morphism space $DK(k)$ respectively $k$.
\end{proof}

\begin{notation} If $X$ is a topological space we write $N\Om X$ for $N(k G \Sing(X))$.
\end{notation}

We can restrict from the dg-category $N(\Om X)$
to a more familiar dg-algebra if $X$ is connected and pointed. Let $\Om X$ denote the topological group of based Moore loops on $X$. Then $C_{*}(\Om X) \coloneqq C_{*}(\Om X, k)$ is a dg-algebra.
\begin{lemma}
Let $X$ be a pointed and connected topological space. $C_{*}(\Om X)$ considered as a dg-category with one object is quasi-equivalent to $N\Om X$.
\end{lemma}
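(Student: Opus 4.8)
The statement is equivalent to producing a zig-zag of quasi-isomorphisms of dg-algebras between $N\Om X = N(kG\Sing X)$ and $C_{*}(\Om X)$, since a quasi-equivalence of one-object dg-categories is precisely such a zig-zag. The plan is to pass through the geometric realization of Kan's loop group, exploiting that $G\Sing X$ is a simplicial-group model for the based loop space and that normalized chains on a simplicial group compute singular chains on its realization \emph{as dg-algebras}.

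First I would reduce to a reduced simplicial set. Since $X$ is connected, $\Sing X$ is connected, so there is a reduced simplicial set $K$ (a single vertex) together with a weak equivalence $K \to \Sing X$. As $NkG$ is left Quillen and every simplicial set is cofibrant, it preserves weak equivalences, giving $N(kG\Sing X) \simeq N(kGK)$. For reduced $K$ the loop groupoid $GK$ is a genuine simplicial group, so $kGK$ is a simplicial $k$-algebra and $N(kGK)$ is a dg-algebra under the Eilenberg--Zilber shuffle product.

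Next I would identify the realization. The realization $|GK|$ is a topological group, and Kan's theorem (V.5 of \cite{Goerss99}) supplies a natural equivalence $|GK| \simeq \Om|K|$ compatible with the multiplications, where the right-hand side carries concatenation of loops. Since $|K| \simeq |\Sing X| \simeq X$, this exhibits $|GK|$ as a grouplike monoid equivalent to the based loop space; the Moore loop monoid $\Om X$ is a strict topological monoid equivalent to ordinary based loops, so $C_{*}(\Om X)$ is unchanged and we obtain a quasi-isomorphism of dg-algebras $C_{*}(|GK|) \simeq C_{*}(\Om X)$.

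Finally, the algebraic comparison: for any simplicial group $\Gamma$ the shuffle map is a lax monoidal natural transformation, so it induces a quasi-isomorphism of dg-algebras $N(k\Gamma) \to C_{*}(|\Gamma|;k)$ from the normalized chains with the shuffle product to the singular chains with the Pontryagin product. Applying this with $\Gamma = GK$ and splicing the equivalences of the previous paragraphs yields $N\Om X \simeq N(kGK) \simeq C_{*}(|GK|) \simeq C_{*}(\Om X)$, as required. The main obstacle is entirely one of multiplicative compatibility: every equivalence above must be arranged to respect the algebra structures. The Eilenberg--Zilber map is only lax symmetric monoidal, which is exactly enough to transport algebra structures; and Kan's equivalence together with the Moore-versus-ordinary-loop comparison must be realized as maps of grouplike $E_{1}$-objects so that they descend to dg-algebra quasi-isomorphisms on chains.
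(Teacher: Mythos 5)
Your proof is correct and follows essentially the same route as the paper: reduce to a reduced model $K$ of $\Sing X$ (using that $NkG$ is left Quillen and all simplicial sets are cofibrant), then invoke Kan's loop-group theorem. The only divergence is the final comparison, where the paper stays simplicial --- producing a weak equivalence of simplicial groups between $GK$ and $\Sing \Om X$ and applying $Nk$ --- whereas you realize $GK$ topologically and transport the algebra structure through the Eilenberg--Zilber map; this makes the multiplicative compatibility you flag more explicit, but the underlying content is the same.
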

\begin{proof}

$\Sing X$ is a connected simplicial set and by the existence of minimal Kan complexes  
has a reduced model $K$, i.e.\ there is a weakly equivalent simplicial set with a single vertex.

Then we have $G \Sing X \simeq GK$ as simplicial groupoids and thus as simplicial categories.
It follows that $N (k G \Sing X) \simeq N kGK$. 
Finally, there is a weak equivalence of simplicial groups between $GK$ and $\Sing \Om X$.
\end{proof}

Since quasi-equivalent dg-categories have quasi-equivalent categories of modules by our earlier discussion we have the following corollary.
\begin{thm}\label{thm-loopreps}
The dg-categories $\Chp^{C_{*}(\Om X)}$ and $\cat Y(X)$ are Morita equivalent, as are $\Chd^{{C_{*}(\Om X)}}$ and $\cat Y^{u}(X)$. 
\end{thm}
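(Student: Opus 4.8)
The plan is to read this statement off as a formal corollary of the two immediately preceding results, combined with the invariance of module categories under quasi-equivalence. Following the convention in the proof of the preceding theorem, I would treat the perfect and unbounded cases simultaneously by writing $\Ch$ for either $\Chp$ or $\Chd$, since every step applies verbatim to both.

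First I would specialise the preceding theorem to $K = \Sing(X)$. This produces a quasi-equivalence $\cat Y(X) = \Ch^{\Sing X} \simeq \Ch^{NkG\Sing X}$, and by the notation introduced just afterwards the target is exactly $\Ch^{N\Om X}$. One point to flag here is that the exponential notation is overloaded: on the left it denotes the cotensor action of a simplicial set on $\Ch$, while on the right it denotes the module category $R\uHom(N\Om X, \Ch)$ of a dg-category. The whole content of that preceding theorem is precisely that these two a priori different constructions agree, so after invoking it I may regard $\cat Y(X)$ as the module category of the dg-category $N\Om X$.

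Next, since $X$ is pointed and connected, the preceding lemma supplies a quasi-equivalence of dg-categories between the one-object dg-category $C_*(\Om X)$ and $N\Om X$. It then remains to transport this equivalence through the module construction, i.e.\ to know that a quasi-equivalence $\cat D \to \cat E$ induces a quasi-equivalence $\Ch^{\cat E} \simeq \Ch^{\cat D}$ of the (perfect, resp.\ fibrant--cofibrant) module categories. This is exactly the invariance established in the discussion preceding the last theorem: for a cofibrant replacement $j\colon \cat D \to Q\cat D$ the restriction $j^*$ is a quasi-equivalence of module categories which preserves the perfect subcategory, and conceptually $\Ch^{\cat D} \simeq R\uHom(\cat D, \Ch)$ is a derived functor and so sends quasi-equivalences to quasi-equivalences. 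Applying this to the lemma gives $\Ch^{C_*(\Om X)} \simeq \Ch^{N\Om X}$.

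Composing the two quasi-equivalences then yields $\Chp^{C_*(\Om X)} \simeq \cat Y(X)$ and $\Chd^{C_*(\Om X)} \simeq \cat Y^u(X)$, and since each side is already Morita fibrant, a quasi-equivalence is in particular a Morita equivalence, which is the assertion. I do not expect a genuine obstacle, as the theorem is a formal concatenation; the only steps requiring care are the notational matching described above and the verification that the cited Morita invariance is compatible with restricting to modules with perfect underlying complex, both of which are already handled in the earlier discussion.
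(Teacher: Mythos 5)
Your proposal is correct and matches the paper's own argument: the paper likewise obtains this as an immediate corollary of the preceding theorem (identifying $\Ch^{\Sing X}$ with $\Ch^{N\Om X}$), the preceding lemma ($C_*(\Om X) \simeq N\Om X$ for $X$ pointed and connected), and the earlier discussion that quasi-equivalent dg-categories have quasi-equivalent module categories preserving the perfect subcategory. Your extra care about the overloaded exponential notation is exactly the right point to flag, and the argument is complete.
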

 
We can sum this up as a slogan: Morita cohomology is controlled by chains on the loop space. We will construct explicit models for $C_*(\Om X)$ in \mbox{Theorem \ref{thm-cellular}}. 

\begin{eg}
The category of loop space representations of $S^{2}$ is quasi-equivalent to the category of bounded chain complexes with a degree 1 endomorphism.
This follows since the homology algebra of $\Om S^2$ is equivalent to a polynomial algebra on a single generator in degree 1. See Section \ref{ch-eg} for more examples.
\end{eg}

\subsection{Cellular computations}\label{sect-cellular}
The previous computations
 correspond to computing \v Cech cohomology and singular cohomology of topological spaces. This is often not the most effective way of computing, and it becomes very cumbersome when we deal with coefficient categories.

In this section we will write down a simpler way of computing a model for $\Chp^{C_* (\Om X)}$ if $X$ is a CW-complex. This model will be given by representations of an algebra $\cat B(X)$ with a generator in degree $e-1$ for every $e$-cell (with an inverse if $e=1$). One could think of this as categorified cellular cohomology. The case for $\Chd^{C_* (\Om X)}$ works exactly in the same manner and for simplicity we write $\Ch^{(-)}$ for both cases again.

Note that if $X$ has no 1-cells and $k$ is a field one can construct $\cat B(X)$ as a cofibrant dg-algebra weakly equivalent to $C_{*}(\Om X)$.

For later reference we note:
\begin{lemma}\label{lemma-ch-colim}
The functor $\cat D \mapsto \Ch^{\cat D}$ sends colimits to limits.
\end{lemma}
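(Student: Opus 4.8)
The plan is to reduce the statement to the derived tensor--hom adjunction for dg-categories. Throughout I read the (co)limits homotopically, since $\Ch^{\cat D}$ is only well defined up to quasi-equivalence; so the precise claim to prove is that $\cat D \mapsto \Ch^{\cat D}$ carries homotopy colimits in $\dgCat$ to homotopy limits, with the conclusion living in $Ho(\dgCat_{Mor})$. The essential input is the identification established just above, namely $\Ch^{\cat D} \simeq R\uHom(\cat D, \Ch)$, where $\Ch$ abbreviates either $\Chd$ or $\Chp$ and $R\uHom$ is To\"en's internal derived Hom. It therefore suffices to show that the contravariant functor $R\uHom(-, \Ch)$ sends homotopy colimits to homotopy limits.

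First I would invoke exactly the property already quoted in Section \ref{sect-sheafification}: by Corollary 6.5 of \cite{Toen06} one has $\holim_i R\uHom(\cat D_i, \Ch) \simeq R\uHom(\hocolim_i \cat D_i, \Ch)$ in $\dgCat$. Combined with the identification of the previous paragraph this gives $\holim_i \Ch^{\cat D_i} \simeq \Ch^{\hocolim_i \cat D_i}$, which is the assertion. Alternatively, one can argue directly from the adjunction $\otimes^L \dashv R\uHom$ together with symmetry of the tensor product (so that $\cat A \otimes^L -$ is a left adjoint and preserves homotopy colimits): for any test dg-category $\cat A$ there are natural weak equivalences
\begin{align*}
\Map(\cat A, R\uHom(\hocolim_i \cat D_i, \Ch))
&\simeq \Map\bigl(\cat A \otimes^L \hocolim_i \cat D_i, \Ch\bigr) \\
&\simeq \Map\bigl(\hocolim_i (\cat A \otimes^L \cat D_i), \Ch\bigr) \\
&\simeq \holim_i \Map(\cat A \otimes^L \cat D_i, \Ch) \\
&\simeq \holim_i \Map(\cat A, R\uHom(\cat D_i, \Ch)),
\end{align*}
and a Yoneda argument in $Ho(\dgCat)$, using that $\Map(\cat A, -)$ for varying $\cat A$ detects weak equivalences, then yields $\Ch^{\hocolim_i \cat D_i} \simeq \holim_i \Ch^{\cat D_i}$.

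The only bookkeeping to watch is that the argument must be run for both targets $\Chd$ and $\Chp$; this is unproblematic because the equivalence $\Ch^{\cat D} \simeq R\uHom(\cat D, \Ch)$ was verified above in each case, so the cited To\"en result applies verbatim with the internal Hom taken into $\Chd$ or into $\Chp$. The one genuine subtlety, which I would flag explicitly, is precisely this homotopical reading of ``colimits'' and ``limits'': the mapping-space computation takes place at the level of derived mapping spaces, so the statement concerns homotopy (co)limits rather than strict ones. I do not expect any further obstacle, since the entire analytic content is already packaged in the commutation of $R\uHom$ with homotopy colimits.
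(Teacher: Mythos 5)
Your argument is correct as a proof of a homotopy-invariant variant of the lemma, but it takes a genuinely different route from the paper's, and it proves a slightly different statement. The paper reads the lemma strictly: $\cat D\mods$ is the ordinary category of dg-functors $\cat D \to \Ch$, i.e.\ the contravariant right adjoint of $- \otimes \Ch$, so $(\colim_i \cat A_i)\mods = \lim_i (\cat A_i\mods)$ on the nose; since every object is fibrant, $\Ch^{\cat D}$ is the full subcategory of cofibrant objects, and because acyclic fibrations agree levelwise the left lifting property cuts out matching subcategories on both sides, giving a strict identification $\Ch^{\colim_i \cat A_i} = \lim_i \Ch^{\cat A_i}$. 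You instead replace $\Ch^{\cat D}$ by $R\uHom(\cat D, \Ch)$ and invoke To\"en's commutation of $R\uHom(-,\Ch)$ with homotopy colimits (or the derived adjunction plus a Yoneda argument), obtaining $\Ch^{\hocolim_i \cat D_i} \simeq \holim_i \Ch^{\cat D_i}$. Your version is more robust, being manifestly invariant under quasi-equivalence, but note two points. First, in the one place the lemma is applied, the proof of Theorem \ref{thm-cellular}, it is the strict reading that is used: the homotopy pullback has already been converted into a strict pullback, and the lemma then identifies that strict limit with $\Ch$ of the strict pushout defining $\cat B(X)$. Your version still suffices there, but only after the extra observation that the pushouts $\colim(D^{*}(1) \leftarrow S^{*}(0) \to \cat B(X_{1}))$ and the filtered colimits in the infinite case are along cofibrations and hence are homotopy colimits --- a step the strict argument avoids. (Relatedly, your premise that $\Ch^{\cat D}$ is only defined up to quasi-equivalence is not quite right: $L(\cat D\mods)$ is a concrete subcategory of a strict functor category, which is what makes the strict statement meaningful.) Second, in your alternative adjunction argument the step moving $\hocolim$ past $\cat A \otimes^L -$ deserves care, since $\dgCat$ is not a monoidal model category and $- \otimes \cat A$ is not left Quillen; this is exactly the content that To\"en's Corollary 6.5 packages for you, so your primary route through that corollary is the safer one.
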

\begin{proof}
The construction $\cat D \mapsto \cat D\mods$ is the naive category of dg-functors and is adjoint to the tensor product $- \otimes \Ch$. All objects are fibrant so we are left to compare cofibrants in $(\colim_{i}\cat A_{i})\mods$ with the limit of the categories of cofibrants in $\cat A_{i}\mods$. But since acyclic fibrations agree, the left lifting property gives the same conditions on both sides. 
\end{proof}

Next we compute an explicit model for $\Ch^{N\Om(X)}$. 
The plan is to proceed by induction on the cells of $X$. To perform this we first need good models for the cofibrations $N\Om S^{n-1} \hookrightarrow N\Om B^{n}$. 

Let $D(n)$ be the differential graded algebra $k[x_{n-1}, x_n \ | \ dx_n = x_{n-1}]$. Let $S(n) = k[x_{n} \ | \ dx_n = 0]$. Then  $k \to S(n)$ and $S(n-1) \to D(n)$ are the generating cofibrations for the model structure on dg-algebras. 

First we observe that ${S(n-1)} \simeq {N\Om S^{n}}$ if $n>1$. In other words $S(n-1)$ provides a model for singular chains on $\Om S^n$ equipped with the Pontryagin product. 
This is of course well-known, but one can also prove it directly using our set-up, see Example \ref{eg-oolocal-sn} in Section \ref{ch-eg}.

We also need to know that there is a map $D(n) \to N\Om B^{n}$ compatible with $S(n-1) \to D(n)$. This follows by the lifting property of the cofibration $S(n-1) \to D(n)$ with respect to the trivial fibration \mbox{$N\Om B^{n} \to *$}.

These are the building blocks needed to associate to any connected CW-complex $X$ 
a dg-algebra $\cat B(X)$ that approximates the way $X$ is glued from cells.

The following result already appears in \cite{Adams56a}. 

\begin{thm}\label{thm-cellular-2}
Associated to every connected CW complex $X$ with cells in dimension $\geq 2$ there is a cofibrant dg-algebra $\cat B(X)$ with one generator in degree $n-1$ for every $n$-cell, that is quasi-equivalent to $N(\Om X)$. 
\end{thm}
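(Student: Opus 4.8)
The plan is to construct $\cat B(X)$ by induction on the skeleta of $X$, realizing $N\Om X$ as a homotopy colimit built one cell at a time. The engine is that $NkG\colon \sSet \to \dgCat_{DK}$ is left Quillen and hence preserves homotopy colimits, together with the fact that a CW complex is the sequential homotopy colimit of its skeleta, where each $X^{(n)}$ is obtained from $X^{(n-1)}$ by a homotopy pushout $X^{(n)} \simeq \hocolim\bigl(\coprod_\alpha B^n \leftarrow \coprod_\alpha S^{n-1} \to X^{(n-1)}\bigr)$ along the attaching maps $f_\alpha$. I will model each such pushout by a strict pushout of cofibrant dg-algebras along (coproducts of) the generating cofibration $S(n-2) \to D(n-1)$, which by the observations preceding the theorem models $N\Om S^{n-1} \hookrightarrow N\Om B^n$, so that attaching an $n$-cell corresponds exactly to freely adjoining one generator in degree $n-1$.

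For the base of the induction, the hypothesis that all cells have dimension $\geq 2$ forces $X^{(1)} = *$, so $\cat B(X^{(1)}) = N\Om X^{(1)} = k$. The $2$-cells are attached along the constant map $S^1 \to X^{(1)} = *$, giving $X^{(2)} \simeq \bigvee_\alpha S^2$, and I set $\cat B(X^{(2)})$ to be the free algebra on one degree-$1$ generator per $2$-cell with zero differential; by the classical computation of $N\Om(\bigvee_\alpha S^2)$ as a tensor algebra (compatible with $S(1) \simeq N\Om S^2$ for a single sphere) this is quasi-equivalent to $N\Om X^{(2)}$. This is exactly where the restriction to cells of dimension $\geq 2$ is used: for $n \geq 3$ the relevant spheres $S^{n-1}$ have $n-1 \geq 2$, so $N\Om S^{n-1} \simeq S(n-2)$ applies, whereas a genuine $S^1$ has $N\Om S^1$ the Laurent algebra rather than $S(0)$, which is why $1$-cells require the inverted generators of Theorem \ref{thm-cellular}.

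For the inductive step with $n \geq 3$, assume $\cat B(X^{(n-1)})$ is cofibrant with a quasi-equivalence $\cat B(X^{(n-1)}) \xrightarrow{\sim} N\Om X^{(n-1)}$. Applying $NkG$ to the cell-attachment span and using $N\Om S^{n-1} \simeq S(n-2)$ together with $N\Om B^n \simeq D(n-1) \simeq k$ identifies $N\Om X^{(n)}$ with the homotopy pushout of $\coprod_\alpha D(n-1) \leftarrow \coprod_\alpha S(n-2) \to N\Om X^{(n-1)}$. I then define $\cat B(X^{(n)})$ to be the strict pushout of $\coprod_\alpha D(n-1) \leftarrow \coprod_\alpha S(n-2) \xrightarrow{\phi} \cat B(X^{(n-1)})$, where $\phi$ carries the degree-$(n-2)$ generator to a cycle representing the looped attaching map $N\Om f_\alpha$, lifted through the quasi-equivalence; concretely this adjoins a free generator $x_\alpha$ in degree $n-1$ with $dx_\alpha = \phi(x_{n-2})$. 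Since $S(n-2)\to D(n-1)$ is a cofibration and $\cat B(X^{(n-1)})$ is cofibrant, this strict pushout computes the homotopy pushout, so $\cat B(X^{(n)})$ is again cofibrant and quasi-equivalent to $N\Om X^{(n)}$. Taking the colimit over $n$, a transfinite composition of cofibrations and hence a homotopy colimit, and using that homology commutes with filtered colimits yields $\cat B(X) = \colim_n \cat B(X^{(n)}) \simeq N\Om X$ with the advertised generators; cofibrancy is preserved throughout.

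I expect the main obstacle to be the comparison in the inductive step: assembling an honest levelwise weak equivalence of spans between the algebraic pushout diagram and the $NkG$-image of the topological one. This requires (a) choosing the identifications $S(n-2) \simeq N\Om S^{n-1}$ and $D(n-1) \to N\Om B^n$ compatibly enough to form the comparison square, and (b) lifting the homotopy class of $N\Om f_\alpha\colon S(n-2) \to N\Om X^{(n-1)}$ to an honest map of dg-algebras into the cofibrant model $\cat B(X^{(n-1)})$ representing it by a definite cycle, which relies on cofibrancy of $S(n-2)$ so that only the homotopy class matters. A secondary subtlety is that the based loop space does not commute with the disjoint union $\coprod_\alpha S^{n-1}$; this is circumvented by attaching cells one at a time (transfinitely), matching the coproduct of generating cofibrations with the successive free extensions by the generators $x_\alpha$.
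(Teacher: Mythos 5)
Your construction is essentially the one the paper has in mind: the same building blocks (the generating cofibrations $S(n-2)\to D(n-1)$ as cofibrant models for $N\Om S^{n-1}\hookrightarrow N\Om B^{n}$), the same induction on cells, and the same use of the hypothesis on $1$-cells to reduce the base case to a wedge of $2$-spheres. But the paper does not actually prove Theorem \ref{thm-cellular-2}; it cites Adams's cobar construction, and the argument it does write out (for Theorem \ref{thm-cellular}) runs the comparison one level up, on module categories: it shows the spans $\Ch^{D}\to\Ch^{S}\leftarrow\Ch^{\cat B(X_{1})}$ and $\Ch^{N\Om B}\to\Ch^{N\Om S}\leftarrow\Ch^{N\Om X_{1}}$ have quasi-equivalent homotopy pullbacks and then invokes Lemma \ref{lemma-ch-colim} to identify the strict limit with $\Ch^{\cat B(X)}$. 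That route only yields a Morita equivalence between $\cat B(X)$ and $N\Om X$ --- the paper explicitly remarks after Theorem \ref{thm-cellular} that its construction does not decide whether the two dg-algebras agree in $Ho(\dgAlg)$. Your argument instead stays at the level of dg-algebras, using that $NkG$ is left Quillen (hence preserves the cell-attachment homotopy pushouts) and that a strict pushout of cofibrant dg-algebras along a cofibration computes the homotopy pushout; this is exactly what is needed for the stronger conclusion of Theorem \ref{thm-cellular-2}, a genuine quasi-equivalence with $N\Om X$. The two points you flag are indeed the only delicate ones and your fixes are the right ones: rigidify the homotopy-commuting comparison of spans using cofibrancy of $S(n-2)$ (so the attaching map can be represented by an honest algebra map, and pushout along a cofibration is invariant under replacing the attaching map by a homotopic one), and attach cells one at a time so that you never need to identify $NkG\Sing(\coprod_\alpha S^{n-1})$ --- a dg-category with many components --- with a free product of copies of $S(n-2)$. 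One small point worth making explicit: $NkG\Sing X^{(m)}$ has many objects, so ``quasi-equivalent to $N\Om X$'' must be read as a quasi-equivalence of dg-categories, or one first passes to a reduced model of $\Sing X^{(m)}$ as the paper does when comparing $N\Om X$ with $C_{*}(\Om X)$; with that reading your induction closes.
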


In particular $\cat Y(X) \simeq \Chp^{\cat B(X)}$ and $\cat Y^{u}(X) \simeq \Chd^{\cat B(X)}$. 
In the next theorem we will consider the case of 1-cells.

\begin{thm}\label{thm-cellular}
Associated to every connected CW complex $X$ there is a dg-algebra $\cat B(X)$ with one generator in degree $n-1$ for every $n$-cell with $n \geq 2$, and with two inverse generators in degree 0 for every 1-cell, such that $\cat Y(X) \simeq \Ch^{\cat B(X)}$.
\end{thm}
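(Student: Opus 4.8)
Theorem \ref{thm-cellular} extends Theorem \ref{thm-cellular-2} to CW complexes with $1$-cells, so the plan is to reuse the cellular induction as far as possible and only add the new building block needed to handle a $1$-cell. The previous theorem handled cells of dimension $\geq 2$ by gluing in copies of the generating cofibrations $S(n-1) \to D(n)$, using that $S(n-1) \simeq N\Om S^n$ and that $N\Om(-)$ sends the pushout describing the attachment of an $n$-cell to a corresponding pushout of dg-algebras. The only thing missing for a general connected CW complex is a good model for the attachment of a $1$-cell, i.e.\ for the cofibration $N\Om S^0 \hookrightarrow N\Om B^1$, or more usefully for the way a single $0$-cell together with $1$-cells builds a wedge of circles.

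First I would establish the base case for $1$-cells: the dg-algebra modelling $N\Om S^1$ should be $k[t, t^{-1}]$, the group algebra of $\Z = \pi_1(S^1)$ placed in degree $0$, since $\Om S^1 \simeq \Z$ and chains on a discrete group give its group algebra. This is where the two \emph{inverse} generators in degree $0$ come from: attaching a $1$-cell contributes a generator $t$ together with its inverse $t^{-1}$, rather than a single polynomial generator as in the higher-dimensional case. I would record this by producing an explicit cofibrant model for the inclusion $k[\Z] \hookrightarrow (\text{loops on the disk})$ analogous to $S(n-1)\to D(n)$, using the lifting property of a cofibration of dg-algebras against the trivial fibration $N\Om B^1 \to *$ exactly as was done for $D(n)\to N\Om B^n$ in the text preceding Theorem \ref{thm-cellular-2}.

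The induction itself then proceeds cell by cell. A connected CW complex can be built with a single $0$-cell (after collapsing a spanning tree of the $1$-skeleton, which does not change homotopy type), so the $1$-skeleton is a wedge of circles and $N\Om$ of it is modelled by a free group algebra on the $1$-cell generators, each inverted. Attaching higher cells then proceeds exactly as in Theorem \ref{thm-cellular-2}: each pushout attaching an $n$-cell ($n \geq 2$) along its boundary corresponds, under $N\Om$, to adjoining a free generator $x$ in degree $n-1$ whose differential encodes the attaching map. Since $N\Om = N(kG\Sing(-))$ is a left Quillen functor it preserves the relevant homotopy pushouts, and cofibrations of dg-algebras are preserved under the pushouts gluing in the $S(n-1)\to D(n)$ and $k[\Z]\hookrightarrow(\cdots)$ building blocks, so $\cat B(X)$ is assembled as a (possibly transfinite) colimit of cofibrations and is therefore cofibrant with the stated generators. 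The quasi-equivalence $\cat B(X)\simeq N(\Om X)$ then yields $\cat Y(X) = \Chp^{\Sing X} \simeq \Chp^{N\Om X} \simeq \Chp^{\cat B(X)}$ by the theorem comparing $\Chp^K$ with $\Chp^{NkGK}$ and the fact that quasi-equivalent dg-categories have quasi-equivalent module categories.

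The main obstacle I anticipate is the $1$-cell step, specifically that a $1$-cell forces one to leave the world of free polynomial (positively graded) generators and introduce a \emph{formal inverse} in degree $0$. This breaks the clean pattern of the higher-dimensional case in two ways: the collapsing-a-spanning-tree reduction must be justified at the level of models (one needs that it only changes $\cat B(X)$ up to quasi-equivalence, relying on homotopy invariance of $\cat Y$ via Corollary \ref{lemma-infinity-htpy}), and the algebra $\cat B(X)$ is genuinely no longer connected/augmented in the naive sense, so one must check that adjoining $t^{-1}$ still produces a cofibrant object and that the resulting pushouts compute the right homotopy type — essentially that $k[\Z]$ is the correct cofibrant model for chains on $\Om S^1$ and interacts correctly with the attachment of higher cells whose attaching maps may wind nontrivially around the $1$-cells.
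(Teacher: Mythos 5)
Your building blocks and inductive scheme match the paper's: a degree-zero model $k[t,t^{-1}]$ (the paper's $S^{*}(0)=k[a,a\inv]$) for $N\Om S^{1}$, a compatible model $D^{*}(1)$ for loops on the $2$-disk obtained by lifting against a trivial fibration, the $1$-skeleton treated as a wedge of circles, and higher cells glued in via $S(n-1)\to D(n)$ exactly as in Theorem \ref{thm-cellular-2}, with a filtered colimit for infinite complexes. The gap is in the mechanism you use to conclude. You argue: $N\Om$ is left Quillen, so it preserves the homotopy pushouts attaching cells; the algebraic pushouts are pushouts along cofibrations of cofibrant objects; hence $\cat B(X)$ is cofibrant and quasi-isomorphic to $N\Om X$ as a dg-algebra, and one passes to module categories. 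The first failure is at the very first step of the new case: $k\to k[t,t^{-1}]$ is \emph{not} a cofibration of dg-algebras and $k[t,t^{-1}]$ is not cofibrant (cofibrant dg-algebras are retracts of quasi-free ones, and the relation $tt^{-1}=1$ obstructs this). Consequently the pushout $\colim(D^{*}(1)\leftarrow S^{*}(0)\to\cat B(X_{1}))$ is not obviously a homotopy pushout in $\dgAlg$, your transfinite composition is not a composition of cofibrations, and the identification of this strict pushout with the homotopy pushout that $N\Om$ produces (which moreover lives in $\dgCat$, not $\dgAlg$) is exactly the point that needs an argument. This is why the statement of Theorem \ref{thm-cellular}, unlike Theorem \ref{thm-cellular-2}, does not assert cofibrancy, and why the paper explicitly remarks afterwards that it does \emph{not} follow from the construction that $\cat B(X)$ and $N\Om X$ agree in $Ho(\dgAlg)$ --- the intermediate claim your argument rests on.

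The paper circumvents this by never comparing $\cat B(X)$ with $N\Om X$ directly. It first applies $\cat Y$, which turns the cell-attachment homotopy pushout of spaces into a homotopy pullback of dg-categories; identifies each corner with $\Ch^{(-)}$ of the corresponding algebraic model using the levelwise quasi-isomorphisms; observes that the resulting strict pullback diagrams of module categories are homotopy pullbacks because every object of $\dgCat_{DK}$ is fibrant (right properness) and one leg is a fibration; and finally invokes Lemma \ref{lemma-ch-colim}, that $\cat D\mapsto\Ch^{\cat D}$ sends strict colimits to strict limits, to recognise the answer as $\Ch^{\cat B(X)}$ with $\cat B(X)$ the \emph{strict} pushout. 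If you want to salvage your route you would need either a cofibrant replacement of $k[t,t^{-1}]$ carried through the whole induction, or a properness argument for $\dgAlg$ together with a comparison of homotopy pushouts in $\dgAlg$ and $\dgCat$; the paper's detour through module categories is precisely designed to avoid both.
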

\begin{proof}Let us define $S^{*}(0) = k[a, a\inv]$ and $D^{*}(1) = k[a, a\inv, b \mapsto a-1]$ and consider the cofibration $S^{*}(0)  \hookrightarrow D^{*}(0)$. Of course $D^{*}(0) \simeq k$.

Then we have compatible quasi-isomorphisms $N\Om S^{1} \to S^{*}(0)$ and $N\Om B^{2} \to D^{*}(1)$. The first is induced by projection to connected components $G\Sing S^{1} \to \set Z$, the second map exists since $D^{*}(1) \to 0$ is a trivial fibration and $N\Om S^{1} \to N\Om B^{2}$ is a cofibration.

Let $X_{1}$ be the 1-skeleton of $X$ and  define 
$\cat B(X_{1}) = \cat B(\bigvee_{s} S^{1}) \coloneqq \otimes_{s} S^{*}(0)$ which is weakly equivalent to $C_{*}(\Om (\bigvee_{s} S^{1}))$
There is an obvious map from $S^{*}(0)$ to $\cat B(X_{1})$ for any attachment map $S^{1} \to X_{1}$. Assume first that $X$ is obtained from $X_{1}$ by attaching a 2-cell. Then we define 
\[ \cat B(X) = \colim \left(D^{*}(1) \leftarrow S^{*}(0) \to \cat B(X_{1})\right)\]

Now $\cat Y(X)$ is the homotopy pullback of $\cat Y(B^{2}) \leftarrow \cat Y(S^{1})\to \cat Y(X_{1})$. But this diagram is weakly equivalent to
$\Ch^{N\Om B^{2}} \to \Ch^{N\Om S^{1}} \leftarrow \Ch^{N\Om X_{1}}$
which is in turn weakly equivalent to
$\Ch^{D^{*}(1)} \to \Ch^{S^{*}(0)} \leftarrow \Ch^{\cat B(X_{1})}$.

These are all pullback diagrams of fibrant objects with one map a fibration, hence they are homotopy pullbacks as $\dgCat_{DK}$ is right proper since every object is fibrant. Since the diagrams are levelwise quasi-equivalent their pullbacks are quasi-equivalent, and thus also isomorphic in $Ho(\dgCat_{Mor})$. But since $\cat D \mapsto \Ch^{\cat D}$ sends colimits to limits by Lemma \ref{lemma-ch-colim} it also follows that 
\begin{align*}
\cat Y(X) 
&\simeq \holim \left( \cat Y(B^{2}) \to \cat Y(S^{1}) \leftarrow \cat Y(X_{1})\right) \\
&\simeq \holim \left(\Ch^{D^{*}(1)} \to \Ch^{S^{*}(0)} \leftarrow \Ch^{\cat B(X_{1})}\right)\\
&\simeq \lim \left( \Ch^{D^{*}(1)} \to \Ch^{S^{*}(0)} \leftarrow \Ch^{\cat B(X_{1})} \right) \\
&\simeq \Ch^{\colim \left(D^{*}(1) \leftarrow S^{*}(0) \to \cat B(X_{1})\right)}
\end{align*}
The colimit in the exponent is how we have defined $\cat B(X)$.

Now consider the general case. First to obtain $\cat B(X_{2})$ note that any attachment map from $S^{1}$ factors through $X_{1}$, so we can repeat the previous step as often as required.
Attachment of higher-dimensional cells works in exactly the same manner, we just have to replace $S^{*}(0)$ by $S(n-1)$ and $D^{*}(1)$ by $D(n)$.

To extend to infinite CW-complexes we have to check the same argument goes through for filtered colimits. Since the maps $X_{< \al} \to X_{\leq \al}$ are cofibrations the filtered colimit is a homotopy colimit and commutes with $N\Om$. So $N\Om X_{\leq \la} \simeq \hocolim_{\al < \la} N\Om X_{\al}$ and we can define $\cat B(X_{\leq \la})$ as $\colim_{\al < \la} \cat B(X_{\leq \al})$.
\end{proof}

\begin{rk}
To use this computation in practice 
we need to identify the degree $n-1$ element $y$ of $\cat B(X_{< \al})$ that corresponds to the image of $S^{n-1}$. Then we adjoin a new generator $x$ with $dx = y$. This can of course be quite non-trivial. There are some examples in Section \ref{ch-eg}.
\end{rk}

\begin{rk}
By construction $\cat B(X)$ is Morita-equivalent to $N\Om X$, but it does not follow from the construction whether the two dg-algebras are isomorphic in $Ho(\dgAlg)$.
\end{rk}

\subsection{Finiteness and Hochschild homology}\label{sect-hochschild}

In this section we consider conditions for Morita cohomology to satisfy some finiteness properties, and determine Hochschild (co)homology in several cases by quoting relevant results from the literature.

Let us first make some definitions. Here $R$ denotes fibrant replacement in $\dgCat_{Mor}$. Specifically, $RB = L(B\op \mods)_{pe}$. 

We say a dg-category $\cat D$ is \emph{locally proper} if the hom-space between any two objects is a perfect complex. $\cat D$ is \emph{proper} if moreover the triangulated category $H_{0}(R\cat D)$ has a compact generator, i.e.\ a compact object which detects all objects. 

Recall an object $X$ in a model category is \emph{homotopically finitely presented} if $\Map(X, -)$ commutes with filtered colimits. 
$\cat D$ is \emph{smooth} if it is homotopically finitely presented as a $\cat D\op \otimes \cat D$-module.
$\cat D$ is \emph{saturated} if it is smooth, proper and Morita fibrant. 

We say $\cat D$ is \emph{of finite type} if there is a homotopically finitely presented dg-algebra $B$ such that $R\cat D \simeq R(B\op)$.

These definitions are Morita-invariant (except for the condition of being Morita fibrant). 
To\"en shows in Lemma 2.6 of \cite{Toen07} that a dg-category has a compact generator if and only if $R\cat D \simeq RB\op$ for some dg-algebra $B$ and is moreover proper if and only if the underlying complex of $B$ is perfect. Moreover any dg-category of finite type is smooth (Proposition 2.14 of \cite{Toen07}).

\begin{rk}
Saturated dg-categories are precisely the dualizable objects in $Ho(\dgCat_{Mor})$.
Another reason to be interested in this finiteness condition is that if a dg-category is saturated there is a nice moduli stack of objects, this is the main result of \cite{Toen07}.
\end{rk}

\begin{propn} The dg-category $\cat Y^{u}(X)$ is triangulated and has a compact generator. If $X$ is a finite CW-complex without 1-cells then $\cat Y^u$ is smooth. If moreover $H_{*}(\Om X)$ is of finite type then $\cat Y^{u}(X)$ is saturated.
\end{propn}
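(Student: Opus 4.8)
The plan is to reduce everything to the loop space representations $\Chp^{C_*(\Om X)}$ via Theorem \ref{thm-loopreps} and its unbounded variant, since $\cat Y^u(X) \simeq \Chd^{C_*(\Om X)} \simeq \Chd^{N\Om X}$, and then verify each finiteness property as a statement about the dg-algebra $C_*(\Om X)$ (or the cellular model $\cat B(X)$ of Theorem \ref{thm-cellular-2}). The key observation is that To\"en's characterization in Lemma 2.6 of \cite{Toen07} lets me translate the abstract conditions into concrete properties of a single dg-algebra: a dg-category of the form $\Chd^B = L(B\mods)$ is triangulated (it is fibrant, being a category of modules), and the free rank-one module is a compact generator. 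So the first sentence is essentially free once I identify $\cat Y^u(X)$ with a module category over $B = C_*(\Om X)$.

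First I would establish the triangulated-with-compact-generator claim. Since $\cat Y^u(X) \simeq \Chd^{C_*(\Om X)}$ is a category of fibrant-cofibrant modules over a dg-algebra, it is Morita fibrant, hence triangulated; the rank-one free module $C_*(\Om X)$ is compact and generates, giving the compact generator. Next, for smoothness when $X$ is a finite CW complex without $1$-cells: here Theorem \ref{thm-cellular-2} provides a cofibrant dg-algebra $\cat B(X)$, quasi-equivalent to $N\Om X$, with finitely many generators (one per cell), each in degree $\geq 1$. Finiteness of $X$ means $\cat B(X)$ is built from $k$ by finitely many cell attachments, so it is homotopically finitely presented as a dg-algebra; by Proposition 2.14 of \cite{Toen07} a dg-category of finite type is smooth, so I would check that $\cat Y^u(X)$ is of finite type with $B = \cat B(X)$ and conclude smoothness. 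The absence of $1$-cells is what keeps $\cat B(X)$ a genuine finitely-generated \emph{free} (tensor) algebra on generators in positive degree, avoiding the inverted degree-$0$ generators of Theorem \ref{thm-cellular}.

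Finally, for saturatedness I must add properness, which by Lemma 2.6 of \cite{Toen07} amounts to the underlying complex of $\cat B(X)$ (equivalently $C_*(\Om X)$) being perfect over $k$. This is exactly the hypothesis that $H_*(\Om X)$ is of finite type: finite-dimensional homology in each degree, together with the boundedness coming from the finite cell structure, forces the underlying complex to be a perfect $k$-complex. Then smooth plus proper plus Morita fibrant is the definition of saturated, so $\cat Y^u(X)$ is saturated.

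The main obstacle I anticipate is the smoothness step, specifically confirming that finiteness of the CW structure really does yield homotopy finite presentation of $\cat B(X)$ as a dg-\emph{algebra} (so that To\"en's finite-type implies-smooth applies), rather than merely as a complex; one must track that each cell attachment is a pushout along a generating cofibration $S(n-1)\to D(n)$ and that finitely many such pushouts preserve homotopy finite presentability. The properness step is comparatively routine once the finite-type hypothesis on $H_*(\Om X)$ is invoked, and the triangulated/compact-generator step is immediate from the module-category description.
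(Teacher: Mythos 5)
Your proposal follows essentially the same route as the paper: fibrancy of $\cat Y^u(X)$ (as a module category / homotopy limit) gives the triangulated structure with $C_*(\Om X)$ as compact generator, the cellular model $\cat B(X)$ from the no-1-cells case is homotopically finitely presented so $\cat Y^u(X)$ is of finite type and hence smooth by To\"en, and the finite-type hypothesis on $H_*(\Om X)$ supplies properness and hence saturation. One small caveat: your remark that the finite cell structure gives \emph{boundedness} of $H_*(\Om X)$ is not right (e.g.\ $H_*(\Om S^2)$ is nonzero in all degrees); perfectness of the underlying complex is exactly what the hypothesis ``$H_*(\Om X)$ of finite type'' is being used to supply, which is also all the paper's own proof asserts.
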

\begin{proof}
Note first that as a homotopy limit $\cat Y^{u}(X)$ is fibrant and the compact generator is given by $C_{*}(\Om X)$. 

Theorem \ref{thm-cellular}
implies that in the absence of 1-cells the dg-algebra $\cat B(X)$ is homotopically finitely presented. 
So the category $\cat Y^{u}(X)$ is of finite type and hence smooth.
If $H_{*}(\Om X)$ is of finite type, then $\cat B(X)$ is a perfect complex over $k$, and $\cat Y^{u}$ is moreover proper and we find that $\cat Y^{u}(X)$ is saturated. 
\end{proof}

In fact Kontsevich shows in \cite{Kontsevich09} that the dg-algebra of chains on the loop space of a finite connected CW complex is always of finite type.

By contrast if $X$ is an infinite CW-complex then $\cat B(X)$ is usually not homotopically finitely presented.
For example consider $\cat B(\C P^{\oo}) \simeq k[x_{1}]/(x_{1}^{2})$ where $x_{1}$ is in degree 1. 

Next we consider properness for $\cat Y(X)$.
The category $\cat H^{M}(X)$ is locally proper if all cohomology groups of $X$ with coefficients in local systems are finite dimensional and concentrated in finitely many degree. 
This is for example the case if $X$ has a finite good cover. Then the hom-spaces are finite limits of perfect chain complexes. 

This is in contrast to Ext-groups of local systems, which can be large even if $X$ is very well behaved, for example if $X$ is a smooth projective variety \cite{Dimca09}.

The example $X = S^{1}$ shows that we cannot expect $\cat Y(X)$ to be proper in general. $\Ch^{S^{1}}$ is the category of complexes of $\Z$-representations, with infinitely many connected components. 

\begin{propn}
If $\pi_{1}(X)$ has only finitely many irreducible finite-dimensional representations then there exists a compact generator $A$ and $\cat Y(X) \simeq L(\End(A)\op\mods)_{{pe}}$. 
Moreover, $\cat Y(X)$ is proper if $C^{*}(X, \End(A))$ is a perfect complex.
\end{propn}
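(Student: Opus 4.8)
The plan is to produce an explicit compact generator for $\cat Y(X)$ and then invoke To\"en's Lemma 2.6 of \cite{Toen07}. Throughout I take $X$ pointed and connected and use Theorem \ref{thm-loopreps} to replace $\cat Y(X)$ by $\Chp^{C_{*}(\Om X)}$, the $C_{*}(\Om X)$-modules whose underlying $k$-complex is perfect. Since $\Om X$ is a space, $C_{*}(\Om X)$ is connective with $H_{0}(C_{*}(\Om X)) = k[\pi_{1}X]$, so there is a truncation map of dg-algebras $C_{*}(\Om X) \to k[\pi_{1}X]$. Restriction along this map sends any finite-dimensional $\pi_{1}(X)$-representation to an object of $\Chp^{C_{*}(\Om X)}$, since such a module is $k$-perfect, being finite-dimensional and concentrated in one degree. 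Writing $V_{1}, \dots, V_{r}$ for representatives of the finitely many irreducible finite-dimensional representations of $\pi_{1}(X)$ and $A_{i}$ for their restrictions, I set $A = \bigoplus_{i} A_{i}$; this is a single $k$-perfect object precisely because the hypothesis makes the set of irreducibles finite.

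The heart of the argument is to show that $A$ generates $H_{0}(R\cat Y(X))$, i.e.\ that the thick subcategory $\langle A \rangle$ it generates is everything. Given $M \in \Chp^{C_{*}(\Om X)}$, its underlying complex is perfect over $k$, so only finitely many $H_{i}(M)$ are nonzero and each is finite-dimensional. Because $C_{*}(\Om X)$ is connective, the good truncation functors $\tau_{\geq n}$ produce honest submodules (acting by $R_{0}$ preserves cycles since $dR \subset R_{-1} = 0$, and $R_{>0}$ raises degree), so $M$ sits in a finite tower of triangles whose associated graded pieces are the $H_{i}(M)[i]$. Each such piece is a module concentrated in a single degree, on which the positive part of $C_{*}(\Om X)$ acts trivially for degree reasons, hence it is exactly the restriction of the finite-dimensional $\pi_{1}(X)$-representation $H_{i}(M)$. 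By Jordan--H\"older every finite-dimensional representation is a finite iterated extension of the $V_{j}$, so each $H_{i}(M)[i]$ lies in $\langle A \rangle$; reassembling the tower gives $M \in \langle A \rangle$. Compactness of $A$ in $D(\cat Y(X))$ is automatic, since $A$ is an object of the dg-category $\cat Y(X)$ and representables are always compact. Thus $A$ is a compact generator, and To\"en's Lemma 2.6 of \cite{Toen07} identifies $\cat Y(X)$ with $L(\End(A)\op\mods)_{pe}$ in $Ho(\dgCat_{Mor})$, which is the first claim.

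For properness I compute $\End(A) = R\uHom_{\cat Y(X)}(A, A)$. Under the identification of $\oo$-local systems with representations of chains on the loop space, the derived endomorphisms of a restricted representation are computed by the cohomology of $X$ with coefficients in the corresponding endomorphism local system, so $\End(A) \simeq C^{*}(X, \End(A))$. The second half of To\"en's Lemma 2.6 says that, for a dg-category with compact generator, properness holds precisely when the underlying complex of the endomorphism dg-algebra is perfect over $k$; hence $\cat Y(X)$ is proper as soon as $C^{*}(X, \End(A))$ is a perfect complex, which is the second claim.

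The main obstacle is the generation step. One must verify carefully that the Postnikov tower of a $k$-perfect $C_{*}(\Om X)$-module is finite, that good truncation genuinely respects the module structure (this is where connectivity of $C_{*}(\Om X)$ is essential), and that the pure layers are honest restrictions of $\pi_{1}(X)$-representations carrying no hidden higher operations. Once these points are settled, the finitely-many-irreducibles hypothesis enters only through Jordan--H\"older to place each layer in $\langle A \rangle$, and everything else is formal from \cite{Toen07}.
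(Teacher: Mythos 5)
Your proposal is correct and follows the same route as the paper: take $A$ to be the sum of the finitely many irreducibles and apply To\"en's Lemma 2.6 of \cite{Toen07}. The paper simply asserts that $A$ ``clearly generates'' and that the properness statement is clear, whereas you supply the missing details (the finite Postnikov tower of a $k$-perfect module over the connective dg-algebra $C_{*}(\Om X)$, the identification of the layers with $\pi_{1}(X)$-representations, and Jordan--H\"older), all of which check out.
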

\begin{proof}
We define $A$ to be the sum of all the irreducibles. This clearly generates the dg-category. 
By Lemma 2.6 of \cite{Toen07} $L(\cat Y(X)\op\mods) \simeq L(\End_{\cat Y(X)}(A)\op\mods)$.
Since $\cat Y(X) \simeq L(\cat Y(X)\op\mods)_{pe}$ we deduce that $\cat Y(X)$ is the subcategory of compact objects in $\End(A)\mods$.

The second statement is clear.
\end{proof}

The proposition applies for example if the fundamental group is finite. Then we can take $A$ to be the group ring. 

\begin{eg}\label{eg-generator}
Let $X$ be simply connected. Then we can take $A = k$ and find $\End(A) \simeq R\uHom_{\Om X}(k, k) \simeq C^{*}(X, k)$. The second quasi-isomorphisms follows for example from results in \cite{HolsteinB}. 
 In particular $\cat Y(X) \simeq C^{*}(X,k)$ in $\dgCat_{Mor}$. 
Then $\cat Y(X)$ is proper if and only if $C^{*}(X, k)$ is a perfect complex. If $C^{*}(X, k)$ is homotopically finitely presented then $\cat Y(X)$ is moreover smooth and saturated. 
\end{eg}

If $\cat Y(X)$ has a compact generator it becomes much easier to compute secondary invariants. In particular we can compute Hochschild homology and cohomology. For definitions and background see \cite{Keller06}. Since Hochschild homology and cohomology are Morita-invariant we can compute them on a generator of a dg-category if there is one. 

Example \ref{eg-generator} implies the following proposition. Here $HH$ stands for either $HH^{*}$ or $HH_{*}$.
\begin{propn}
Let $X$ be simply connected, then $HH(\cat Y(X)) \cong HH(C^{*}(X))$.
\end{propn}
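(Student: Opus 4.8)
The proposition states that for simply connected $X$, the Hochschild (co)homology of $\cat Y(X)$ is isomorphic to the Hochschild (co)homology of $C^*(X)$. The key preceding result is Example \ref{eg-generator}, which establishes that for simply connected $X$, we have $\cat Y(X) \simeq C^*(X, k)$ in $\dgCat_{Mor}$, i.e., they are Morita equivalent (the dg-algebra $C^*(X)$ is a compact generator, with $\End(k) \simeq C^*(X,k)$).

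The plan is to invoke Morita invariance of Hochschild (co)homology. First I would recall that Example \ref{eg-generator} gives a compact generator $A = k$ for $\cat Y(X)$ with $\End(A) \simeq C^*(X,k)$, so that $\cat Y(X)$ and the one-object dg-category $C^*(X)$ are isomorphic in $Ho(\dgCat_{Mor})$, i.e., Morita equivalent. Then, since the excerpt explicitly notes (just before the proposition) that "Hochschild homology and cohomology are Morita-invariant" and "we can compute them on a generator of a dg-category if there is one," the isomorphism $HH(\cat Y(X)) \cong HH(C^*(X))$ follows immediately by transporting along the Morita equivalence.

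The only real content to spell out is that the Morita equivalence of Example \ref{eg-generator} is exactly the kind of equivalence under which $HH$ is invariant, citing the appropriate definitions and invariance statement (e.g. from \cite{Keller06}). Since $HH$ stands for either $HH^*$ or $HH_*$, I would note that both are Morita-invariant, so a single argument handles both cases simultaneously.

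I do not expect any genuine obstacle here: the proposition is a direct corollary once Example \ref{eg-generator} and the Morita-invariance of Hochschild (co)homology are in hand. The proof is essentially a one-line deduction, and the main (minor) point is simply to make explicit that the equivalence $\cat Y(X) \simeq C^*(X)$ in $Ho(\dgCat_{Mor})$ is the Morita equivalence to which the invariance applies.
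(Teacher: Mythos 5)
Your proposal is correct and follows exactly the paper's (implicit) argument: the paper simply states that Example~\ref{eg-generator} implies the proposition, relying on the compact generator $A=k$ with $\End(A)\simeq C^{*}(X,k)$ and the Morita-invariance of $HH$. You have merely spelled out the one-line deduction the paper leaves to the reader.
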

So we can compute Hochschild (co)homology of Morita cohomology from minimal models (in the sense of Sullivan).

\begin{propn}
For any space $X$ there are isomorphisms $HH(\cat Y^{u}(X)) \cong HH(C_{*}(\Om X)) \cong HH(\cat B(X))$. 
\end{propn}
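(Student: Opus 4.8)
The plan is to establish the three Hochschild (co)homology isomorphisms by exhibiting a chain of Morita equivalences between the relevant dg-categories and then invoking the Morita-invariance of $HH$. The key observation is that all three objects appearing in the statement are, up to Morita equivalence, the same dg-algebra viewed from different angles: $\cat Y^{u}(X) = \Chd^{\Sing X}$, the dg-algebra $C_{*}(\Om X)$ of chains on the loop space, and the cellular model $\cat B(X)$. So the proof reduces almost entirely to assembling equivalences proved earlier in the paper.

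First I would recall that Hochschild homology and cohomology are invariant under Morita equivalence of dg-categories; this is standard and is referenced via \cite{Keller06} in the surrounding text. Hence it suffices to show that the three dg-categories $\cat Y^{u}(X)$, the one-object dg-category $C_{*}(\Om X)$, and $\cat B(X)$ are all Morita equivalent, i.e.\ isomorphic in $Ho(\dgCat_{Mor})$. For the first isomorphism $HH(\cat Y^{u}(X)) \cong HH(C_{*}(\Om X))$, I would apply Theorem \ref{thm-loopreps}, which states precisely that $\Chd^{C_{*}(\Om X)}$ and $\cat Y^{u}(X)$ are Morita equivalent; combined with the fact that $\cat D\mapsto \Chd^{\cat D}$ realises Morita fibrant replacement (so that $\Chd^{C_{*}(\Om X)}$ has the same $HH$ as the one-object dg-category $C_{*}(\Om X)$), this yields the claim. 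The point here is that the construction $\cat D \mapsto L(\cat D\mods)_{pe} = \Chp^{\cat D}$ is a fibrant replacement in $\dgCat_{Mor}$, so $C_{*}(\Om X)$ and $\Chd^{C_{*}(\Om X)}$ represent the same object of $Ho(\dgCat_{Mor})$ and therefore have the same Hochschild invariants.

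For the second isomorphism $HH(C_{*}(\Om X)) \cong HH(\cat B(X))$, I would invoke Theorem \ref{thm-cellular} (together with Theorem \ref{thm-cellular-2} in the case without $1$-cells), which constructs $\cat B(X)$ as a dg-algebra Morita-equivalent to $N\Om X$, and the Lemma identifying $N\Om X$ with $C_{*}(\Om X)$ as one-object dg-categories when $X$ is pointed and connected. Since both are one-object dg-categories that are Morita equivalent, their Hochschild (co)homology agree. The essential input is the Remark following Theorem \ref{thm-cellular} noting that $\cat B(X)$ is Morita-equivalent to $N\Om X$ by construction, which is exactly the relation needed (we do not need the stronger, unproven claim that they are isomorphic in $Ho(\dgAlg)$, precisely because $HH$ is Morita-invariant).

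The main obstacle, and the only genuinely delicate point, is the passage between the one-object dg-algebra $C_{*}(\Om X)$ and its category of perfect modules at the level of Hochschild invariants: one must be careful that Morita-invariance of $HH$ is being applied to an equivalence in $Ho(\dgCat_{Mor})$ rather than merely a quasi-equivalence, and that $\cat Y^{u}(X)$, being defined as a homotopy limit, is genuinely the triangulated hull of $C_{*}(\Om X)$ rather than something only weakly related to it. This is handled by the earlier identification (in the discussion preceding Theorem \ref{thm-loopreps}) of $\Chd^{\cat D}$ with $R\uHom(\cat D,\Chd)$ and the fact that fibrant replacement in $\dgCat_{Mor}$ preserves $HH$; once these are in place, the proof is simply the concatenation of the cited equivalences.
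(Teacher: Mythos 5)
Your overall strategy --- chain together the equivalences already established in the paper and invoke Morita-invariance of $HH$ --- is the right one, and your treatment of the second isomorphism $HH(C_{*}(\Om X)) \cong HH(\cat B(X))$ matches the paper exactly: $\cat B(X)$ is Morita-equivalent to $N\Om X \simeq C_{*}(\Om X)$ and Morita-invariance does the rest.

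There is, however, a genuine gap in your argument for the first isomorphism. You assert that $\Chd^{C_{*}(\Om X)}$ ``realises Morita fibrant replacement'' of the one-object dg-category $C_{*}(\Om X)$, so that the two represent the same object of $Ho(\dgCat_{Mor})$. This is false. The Morita fibrant replacement of a dg-algebra $A$ is $L(A\op\mods)_{pe}$, the subcategory of \emph{compact} objects of the derived category, whereas $\Chd^{A} = L(A\mods)$ is the dg-category of \emph{all} fibrant--cofibrant modules. These are not Morita equivalent in general: already for $A = k$ the category $\Chd = L(k\mods)$ contains, via Yoneda, representables of arbitrary unbounded complexes as compact objects of its derived category, so $D(\Chd) \not\simeq D(k)$ and $k \not\simeq \Chd$ in $Ho(\dgCat_{Mor})$ (indeed $k \simeq \Chp$, not $\Chd$). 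Since Theorem \ref{thm-loopreps} identifies $\cat Y^{u}(X)$ with the \emph{full} module category $\Chd^{C_{*}(\Om X)}$, Morita-invariance alone does not carry you from $HH(\cat Y^{u}(X))$ to $HH(C_{*}(\Om X))$. The paper bridges exactly this step by citing Corollary 8.2 of \cite{Toen06}, which asserts directly that the Hochschild invariants of a dg-category agree with those of its full derived category of modules (both are computed as derived endomorphisms of the identity bimodule); this is a genuinely stronger statement than Morita-invariance and is the missing ingredient in your proof. With that substitution your argument goes through.
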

\begin{proof}
The first isomorphisms follows from Corollary 8.2 in \cite{Toen06},  the second isomorphism follows since Hochschild (co)homology is Morita-invariant.
\end{proof}

The following applications follows from results readily available in the literature.

\begin{propn}
Let $X$ be simply connected then $HH_{*}(\cat Y(X)) \cong H^{*}(\cat L X)$. 
If $M$ is a simply connected closed oriented manifold of dimension $d$ then $HH^{*}(\cat Y(M)) \cong H_{*+d}(\cat L M)$ as graded algebras with the Chas-Sullivan product on the right hand side.
\end{propn}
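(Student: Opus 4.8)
The plan is to reduce both isomorphisms to known Hochschild computations for the singular cochain algebra $C^{*}(X)$ and then to quote the classical comparison theorems between Hochschild invariants of cochains and the (co)homology of the free loop space $\cat L X$.

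First I would dispose of the homological statement. By Example \ref{eg-generator}, if $X$ is simply connected then $\cat Y(X)$ admits the compact generator $k$, whose endomorphism dg-algebra is $\End(k) \simeq C^{*}(X,k)$, so that $\cat Y(X) \simeq C^{*}(X,k)$ in $\dgCat_{Mor}$. Since Hochschild homology is Morita-invariant---this is precisely the content of the preceding proposition---it follows that $HH_{*}(\cat Y(X)) \cong HH_{*}(C^{*}(X))$. It then remains to identify $HH_{*}(C^{*}(X)) \cong H^{*}(\cat L X)$, which is Jones' theorem: for a simply connected space the Hochschild homology of the cochain algebra computes the cohomology of the free loop space, and the grading matches the statement.

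The cohomological statement proceeds in parallel. Again $HH^{*}(\cat Y(M)) \cong HH^{*}(C^{*}(M))$ by Morita invariance of Hochschild cohomology. The remaining identification $HH^{*}(C^{*}(M)) \cong H_{*+d}(\cat L M)$, as an isomorphism of graded algebras carrying the cup product on Hochschild cohomology to the Chas--Sullivan loop product, is the main theorem of Cohen and Jones. Here the hypotheses that $M$ is closed, oriented and of dimension $d$ are essential: Poincar\'e duality supplies both the degree shift by $d$ and the Calabi--Yau (Frobenius) structure on $C^{*}(M)$ under which the loop product becomes visible on the Hochschild side.

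The genuinely substantive input is entirely contained in these two cited theorems; the only thing I must check myself is that the algebraic models entering them agree with the category produced here. This is where the care lies: I must confirm that the endomorphism dg-algebra of the generator in Example \ref{eg-generator} is equivalent, as an $A_{\infty}$-algebra, to the cochain algebra used by Jones and by Cohen--Jones, and that its multiplication corresponds to the cup product. Matching these multiplicative structures, together with the homological-versus-cohomological grading conventions and the shift by $d$, is the step where convention and sign errors are most likely to intrude, so it is the part I would verify most carefully. I expect this bookkeeping, rather than any deep new argument, to be the main obstacle.
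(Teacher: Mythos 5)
Your proposal matches the paper's argument: the paper likewise reduces to $HH(C^{*}(X))$ via the preceding proposition (which rests on Example \ref{eg-generator} and Morita invariance), then quotes Jones' theorem for $HH_{*}(C^{*}(X,k)) \cong H^{*}(\cat L X)$ and the Cohen--Jones result for the ring isomorphism with the Chas--Sullivan product. Your closing caveats about matching $A_{\infty}$-structures and grading conventions are sensible but go beyond what the paper itself verifies.
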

\begin{proof}
If $X$ is simply connected it is well known (see \cite{Loday11}) 
that $HH_{*}(C^{*}(X, k)) \cong H^{*}(\cat L X)$ where $\cat L X$ is the free loop space. 

The second part follows since the Hochschild cohomology ring of singular cochains on $M$ (with the cup product) is isomorphic to its loop homology with the Chas-Sullivan product, 
cf. \cite{Cohen02}.  
\end{proof}

Note that we do not expect Hochschild homology of $\cat Y(X)$ to be particularly tractable if $X$ is not simply connected. For example $\cat Y(S^{1})$ is equal to dg-representations of $\set Z$ and has $|k^{*}|$ simple objects with no morphisms between them. Hence it follows from the explicit definition in \cite{Keller06} that Hochschild homology consists of $|k^{*}|$ copies of $HH_{*}(k[y])$ where $y$ lives in degree 1 and has square $0$.
\begin{propn}
For any space $HH_{*}(\cat Y^{u}(X)) \cong H_{*}(\cat L X)$. 
If $X$ is a simply connected CW complex there is an isomorphism of graded algebras $HH^{*}(\cat Y^{u}(X)) \cong H^{*}(\cat L X)$.
\end{propn}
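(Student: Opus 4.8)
The plan is to transport both assertions onto the dg-algebra $C_*(\Om X)$ and then quote the classical comparison between the Hochschild invariants of chains on the based loop space and the (co)homology of the free loop space $\cat L X$. When $X$ is connected and pointed, Theorem \ref{thm-loopreps} provides a Morita equivalence $\cat Y^u(X) \simeq \Chd^{C_*(\Om X)}$, and the proposition recorded above gives $HH(\cat Y^u(X)) \cong HH(C_*(\Om X))$ for both $HH_*$ and $HH^*$; thus everything reduces to $HH_*(C_*(\Om X))$ and $HH^*(C_*(\Om X))$. To handle the clause ``for any space'' I would first split $X$ into path components: $\Sing X = \coprod_i \Sing X_i$, the cotensor turns this coproduct into a product $\cat Y^u(X) \simeq \prod_i \cat Y^u(X_i)$, the Hochschild homology splits as a sum over the components, and $\cat L X = \coprod_i \cat L X_i$, so it suffices to treat each connected $X_i$. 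Hence I may assume $X$ connected.

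For the homology statement I would then invoke the theorem of Goodwillie and of Burghelea--Fiedorowicz that $HH_*(C_*(\Om X)) \cong H_*(\cat L X)$: the cyclic bar construction of the topological monoid $\Om X$ realises to $\cat L(B\Om X) \simeq \cat L X$, so that applying chains to the cyclic bar complex computes $C_*(\cat L X)$ (see \cite{Loday11}). Together with the reduction to components this yields $HH_*(\cat Y^u(X)) \cong H_*(\cat L X)$ for arbitrary $X$.

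For the cohomology statement, with $X$ now simply connected, the cleanest route is Koszul duality. For simply connected $X$ of finite type the dg-algebras $C_*(\Om X)$ and $C^*(X)$ are Koszul dual, via Adams' cobar equivalence $C_*(\Om X) \simeq \Om C_*(X)$, and this duality exchanges Hochschild cohomology with Hochschild homology, giving $HH^*(C_*(\Om X)) \cong HH_*(C^*(X))$. Jones' theorem $HH_*(C^*(X)) \cong H^*(\cat L X)$ --- the same input already used above for $HH_*(\cat Y(X))$, cf. \cite{Loday11} --- then supplies the identification $HH^*(\cat Y^u(X)) \cong H^*(\cat L X)$. Consistency with the manifold computation $HH^*(\cat Y(M)) \cong H_{*+d}(\cat L M)$ of the earlier proposition is exactly what one expects, since Koszul duality swaps the two algebras and correspondingly swaps $HH_*$ with $HH^*$.

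The hard part will be the \emph{multiplicative} matching in the cohomological case. The additive comparisons are standard cyclic-bar and Eilenberg--Zilber arguments, but to upgrade the isomorphism $HH^*(C_*(\Om X)) \cong H^*(\cat L X)$ to one of graded algebras one must track the Hochschild cup product through the Koszul-duality equivalence and identify it with the cup product on $H^*(\cat L X)$ (cf. \cite{Cohen02}). This requires the invariance of the Gerstenhaber structure under the bar/cobar duality and careful bookkeeping of the homological-versus-cohomological grading and of the degree shifts the duality introduces; this is precisely where simple-connectivity and finite type are used, whereas the purely additive homology statement needs only connectedness.
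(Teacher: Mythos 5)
Your treatment of the homological statement is essentially the paper's own proof: reduce to $HH_*(C_*(\Om X))$ via the Morita-invariance proposition $HH(\cat Y^u(X)) \cong HH(C_*(\Om X))$ and then quote Goodwillie/Burghelea--Fiedorowicz (the paper cites 7.3.14 of \cite{Loday92}). Your extra step splitting $X$ into path components to justify ``for any space'' is a reasonable piece of care that the paper omits.

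The cohomological statement is where there is a genuine gap. The paper's proof simply invokes Menichi's theorem \cite{Menichi01}, which establishes $HH^*(C_*(\Om X)) \cong H^*(\cat L X)$ \emph{as graded algebras} directly for simply connected $X$. You instead route through Koszul duality, asserting that the duality between $C_*(\Om X)$ and $C^*(X)$ ``exchanges Hochschild cohomology with Hochschild homology'', i.e.\ $HH^*(C_*(\Om X)) \cong HH_*(C^*(X))$. That is not the standard Koszul-duality invariance: what is preserved (Keller; F\'elix--Menichi--Thomas) is Hochschild \emph{cohomology}, $HH^*(A) \cong HH^*(A^!)$ as Gerstenhaber algebras; there is no general quotable theorem trading $HH^*$ of an algebra for $HH_*$ of its Koszul dual, and you supply no argument for one. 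Even granting the additive coincidence (both groups are abstractly $H^*(\cat L X)$ by Menichi and by Jones respectively), your chain of identifications terminates in $HH_*(C^*(X))$, and transporting the cup product of $HH^*(C_*(\Om X))$ through it is precisely what you defer as ``the hard part'' --- but that multiplicative matching \emph{is} the assertion of the proposition, so the proof is incomplete exactly where it matters. The repair is to cite the result that carries the multiplicative content: $HH^*(\Sing \Om X) \cong H^*(\cat L X)$ as graded algebras for simply connected $X$ is proved in \cite{Menichi01}.
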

\begin{proof}
We find $HH_*(\cat Y^u(X)) \cong HH_* \Om(X) \cong H_*(\cat LX)$ from 7.3.14 in \cite{Loday92}. 

The result that $HH^*\Sing \Om X \cong H^*(\cat L X)$ as graded algebras if $X$ is simply connected is in \cite{Menichi01}.
\end{proof}

\section{Examples}\label{ch-eg}

In this section we compute some examples of Morita cohomology. We will mainly use the characterization in terms of $C_{*}(\Om X)$ or the dg-algebra $\cat B(X)$ defined in Section \ref{sect-cellular}.

In the following whenever an element has a subscript, this will denote its degree.

\begin{eg}\label{eg-s1}
We begin with the case $X = S^{1}$. Clearly $\cat H^M(S^1)$ is equivalent to the category of representations of $\Z \simeq \Om S^1$.

This is also the category of bounded chain complexes of local systems on $S^1$. 

We can also characterize $\cat H^M(S^1)$ as the explicit homotopy limit 
\[(\Chp)^I \times^{h}_{\Chp \times \Chp} \Chp\]
Here $\Chp^I$ is the path object in dg-categories, see for example \cite{HolsteinC}. 
The limit then comes out as the category of pairs $(M, \phi \in \Aut(M))$ with morphisms $(f,g,h)\colon  (M, \phi) \to (N, \psi)$ in $\uHom(M,N)^{\oplus 2} \oplus \uHom(M,N)[-1]$ with differential
\[(f,g,h) \mapsto (df, dg, dh -(-1)^{|g|} g \phi + \psi f)\]
In particular $\uHom^*(k, k) \cong k \oplus k[1]$, which is exactly cohomology of $S^1$, as predicted.

Note that the category $\cat H^M(S^1)$ is highly disconnected, in fact isomorphism classes of simple objects are naturally in bijection with $k^*$. Of course $k^*$ has a geometric structure, and one way of interpreting large sets of isomorphism classes of objects is to consider a moduli stack of objects of $\cat H^M(X)$. We will not follow this direction here.
\end{eg}

\begin{eg}\label{eg-oolocal-sn}
If $n>1$ then $\cat H^{M}(S^{n}) \simeq \Chp^{S(n)}$, i.e.\ the category of perfect chain complexes with an endomorphism in degree $n-1$. 
\end{eg}
\begin{proof}[Proof 1]
This is a consequence of the quasi-\-isomorphism $S(n) \to N(\Om \Sing S^n)$ which follows form the well-known computation of $H_{*}(\Om \Sing S^{n})$. 
\end{proof}
\begin{proof}[Proof 2]
We can also compute $\cat B(S^{2})$ using the method of Theorem \ref{thm-cellular} by gluing two copies of $B^{2}$ along $S^{1}$. The resulting dg-algebra has one invertible generator with two trivialising homotopies, which is quasi-isomorphic to \mbox{$k[x_{1}] = S(1)$}.

Once we know the case $n=2$ we can inductively compute $S^{n} = D^{n} \amalg_{S^{n-1}} D^{n}$ and note that $S(n) \simeq D(n) \otimes^{L}_{S(n)} D(n)$. 

Note that we can use this construction of $\cat B(S^n)$ in the proof of Theorem \ref{thm-cellular}.
There is no circularity as we only need a model for spheres in dimension less than $n$ to compute $\cat B(S^n)$.
\end{proof}

\begin{eg} Next consider some more detail for $n=2$.
Since $k$ is a generator let $A \coloneqq R\underline \End_{C_{*}(\Om S^{2})} \simeq C^{*}(S^{2}) \simeq k[x_{2}, x_{3} \stackrel d \to x_{2}^{2}]$ and we can characterize $\cat Y(S^{2})$ as compact objects in $A\mods$.

An example of an object of $R\Ga_{Morita}(S^2, k)$ is the chain complex associated to the Hopf fibration $p\colon  S^3 \to S^2$. As a homotopy locally constant sheaf we can consider this as $Rp_* \Sing(S^3)$. As a representation of $\Om S^2$ this can be written as $k \oplus k[-1]$ with the natural self-map of degree 1.

Since $\pi_1(S^2)$ is trivial, we can also view $R\Ga_{Mor}(S^2, \underline k)$ as generated by the trivial local system and the information $H^*(S^2, -)$ provides about (iterated) extensions. This provides a slightly different viewpoint on Morita cohomology.

\end{eg}

\begin{eg}
For a group $G$ it is clear that $\cat H^{M}(BG)$ is just the dg-category of perfect complexes with an action of $G$.
\end{eg}

\begin{eg}
The dg-category $\cat H^{M}(\R P^2)$ is given by representations of $\cat B(\R P^2)$ on perfect complexes, and $\cat B(\R P^2)$ has generators $a_{0}, a_{0}\inv, b_{1}$ such that $db_{1} = a_{0}\circ a_{0}-1$. 
The identification $db_{1} = a_{0} \circ a_{0} -1$ is induced by the attaching map from the boundary of the 2-cell to $\R P^{1}$. 
rationally good.

We can obtain $\cat B(\R P^{3})$ from $\cat B(\R P^{2})$ by adding $c_{2}$ with $dc_{2} = 0$.

If we are working over the field $\Q$ Morita cohomology has certain similarities to rational homotopy theory, cf. the duality between $C_{*}(\Om X)$ and $C^{*}(X)$ in the simply connected case. 
On the other hand we see that $\R P^{2}$ has trivial minimal model, but its Morita cohomology is a dg-category with two simple objects corresponding to the irreducible representations of $\Z/2$.

\end{eg}

\begin{eg}
Next we compute the map $p^*\colon  \cat H^{M}(S^{2}) \to \cat H^{M}(S^{3})$ induced by the Hopf fibration.

On the level of loop spaces we see that the map is induced by $\Om p_*\colon  H_*(\Om S^3) \to H_*(\Om S^2)$ which is given by $x_2 \mapsto y_1^2$ on the generators.

With this in mind we can work out $\cat H^{M}(\C P^2)$ explicitly by considering the following diagram: 
\[\cat H^{M}(B^4) \stackrel {i^*} \longrightarrow \cat H^{M}(S^3) \stackrel {p^{*}}\longleftarrow \cat H^{M}(\C P^1)\]

On the level of dg-algebras we have
\[D(3) \stackrel {i_{*}} \longleftarrow S(3) \stackrel {p_{*}} \longrightarrow \cat B(S^{2}) \cong S(2)\]

The attaching map $p_{*}$, is induced by the Hopf fibration. As we have just seen 
it corresponds to the map $H_{*}(\Om S^{3}) \to H_{{*}}(\Om S^{2})$ given by sending $x_{2} \mapsto y_{1}^{2}$. Hence we find:
\[\cat B(\C P^{2}) \simeq k[\al_{1}, \al_{3} \ | \ d\al_{3} = \al_{1}^{2}]\]
\end{eg}

\begin{eg}
We can generalise this to $\C P^n$, every extension over a $2i$-cell corresponding to another map $\al_{2i-1}$ in degree $2i-1$. We find $d\colon  \al_3 \mapsto \al_1^2$; \ $\al_5 \mapsto \al_3 \al_1 + \al_1 \al_3$; \ $\al_7 \mapsto \al_5 \al_1 + \al _3^2 + \al_1 \al_5$ etc. 

It is well-known that $H_*(\Om \C P^n)$ is isomorphic to \mbox{$\Lambda(y_1) \otimes k[y_{2n}]$} as a Hopf algebra, in particular the Pontryagin products agree. 
To relate this to the above description identify $y_{2n} = \al_{2n-1} \al_1 + \dots + \al_1 \al_{2n-1}$. The dg-algebra $\cat B(X)$ is larger since it is quasi-free (i.e.\ the underlying graded associative algebra is free), while $H_{*}(\Om \C P^{n})$ is only quasi-free as a commutative dg-algebra.
\end{eg}
\begin{eg}
Taking the limit we find $\cat B(\C P^\oo)$. 
Of course the homology algebra of $\Om \C P^\oo$ is just that of $S^1$. Indeed $k[\al_1, \al_3, \dots]$ with its differentials is a quasi-free model for $k[z_1]$. 
\end{eg}

We conclude with the following example of a space with trivial Morita cohomology.

\begin{eg}
Consider Higman's 4-group $H$ with the following presentation:
$$\langle a, b, c, d \ | \ a^{-1}ba=b^2,\ b^{-1}cb=c^2,\ c^{-1}dc=d^2,\ d^{-1}ad=a^2\rangle $$
This is an acyclic group without non-trivial finite dimensional representations. Its classifying space $BH$ is known to be a finite CW complex.
For references see e.g.\ \cite{Berrick02}.
It is easy to see that the Morita cohomology of $BH$ is quasi-equivalent to $\Chp$.
\end{eg}

\bibliography{../biblibrary}

\begin{thebibliography}{10}

\bibitem{Adams56a}
{\sc J.~F. Adams and P.~J. Hilton}, {\em On the chain algebra of a loop space},
  Comment. Math. Helv., 30 (1956), pp.~305--330.

\bibitem{Barwick07}
{\sc C.~Barwick}, {\em {On (enriched) left Bousfield localization of model
  categories}}, Arxiv e-prints,  (2007), 0708.2067.

\bibitem{Berrick02}
{\sc A.~J. Berrick}, {\em {A topologist's view of perfect and acyclic groups}},
  in Invitations to Geometry and Topology, M.~Bridson and S.~Salamon, eds.,
  Oxford University Press, 2002, ch.~1, pp.~1--28.

\bibitem{Bondal02}
{\sc A.~Bondal and D.~Orlov}, {\em {Derived categories of coherent sheaves}},
  ArXiv e-prints,  (2002), math/0206295.

\bibitem{Cisinski09}
{\sc D.-C. Cisinski}, {\em {Locally constant functors}}, Mathematical
  Proceedings of the Cambridge Philosophical Society, 147 (2009), p.~593.

\bibitem{Cohen02}
{\sc R.~L. Cohen and J.~D.~S. Jones}, {\em A homotopy theoretic realization of
  string topology}, Math. Ann., 324 (2002), pp.~773--798.

\bibitem{Dimca09}
{\sc A.~Dimca, S.~Papadima, and A.~I. Suciu}, {\em Non-finiteness properties of
  fundamental groups of smooth projective varieties}, J. Reine Angew. Math.,
  (2009), pp.~89--105.

\bibitem{Dugger}
{\sc D.~Dugger}, {\em {A primer on homotopy colimits}}.
\newblock Available at: math.uoregon.edu/~ddugger/hocolim.pdf.

\bibitem{Dugger04a}
{\sc D.~Dugger, S.~Hollander, and D.~Isaksen}, {\em {Hypercovers and simplicial
  presheaves}}, Math. Proc. Camb. Phil. Soc., 136 (2004), pp.~9--51.

\bibitem{Dugger04b}
{\sc D.~Dugger and D.~C. Isaksen}, {\em Topological hypercovers and {$\Bbb
  A^1$}-realizations}, Math. Z., 246 (2004), pp.~667--689.

\bibitem{Dwyer04}
{\sc W.~G. Dwyer, P.~S. Hirschhorn, D.~M. Kan, and J.~H. {Smith}}, {\em
  Homotopy limit functors on model categories and homotopical categories},
  Mathematical Surveys and Monographs, American Mathematical Society, 2004.

\bibitem{Dwyer83}
{\sc W.~G. {Dwyer} and D.~M. Kan}, {\em Function complexes for diagrams of
  simplicial sets}, Nederl. Akad. Wetensch. Indag. Math., 45 (1983),
  pp.~139--147.

\bibitem{Goerss99}
{\sc P.~Goerss and J.~Jardine}, {\em Simplicial homotopy theory}, Progress in
  Mathematics Series, Birkhauser Verlag GmbH, 1999.

\bibitem{Hirschhorn03}
{\sc P.~S. Hirschhorn}, {\em {Model categories and their localizations}},
  vol.~99 of Mathematical Surveys and Monographs, American Mathematical
  Society, 2003.

\bibitem{Simpson01}
{\sc A.~Hirschowitz and C.~Simpson}, {\em {Descente pour les n-champs}}, Arxiv
  e-prints,  (2001), math/9807049.

\bibitem{HolsteinB}
{\sc J.~Holstein}, {\em {Morita Cohomology and Homotopy Locally Constant
  Sheaves}}.
\newblock Preprint, 2014.

\bibitem{HolsteinC}
\leavevmode\vrule height 2pt depth -1.6pt width 23pt, {\em {Properness and
  simplicial resolutions for the model category dgCat}}.
\newblock Preprint, 2014.

\bibitem{Hovey07}
{\sc M.~Hovey}, {\em {Model categories}}, no.~63 in Mathematical Surveys and
  Monographs, American Mathematical Society, 2007.

\bibitem{Isaacson09}
{\sc S.~B. Isaacson}, {\em {A note on unenriched homotopy coends}}.
\newblock Available at http://www.math.utexas.edu/users/isaacson/PDFs/diss.pdf,
  2009.

\bibitem{Katzarkov09}
{\sc L.~Katzarkov, T.~Pantev, and B.~To\"{e}n}, {\em {Algebraic and topological
  aspects of the schematization functor}}, Compositio Mathematica, 145 (2009),
  pp.~633--686, math/0503418v2.

\bibitem{Keller06}
{\sc B.~Keller}, {\em {On differential graded categories}}, ArXiv e-prints,
  (2006), math/0601185.

\bibitem{Kelly82}
{\sc G.~Kelly}, {\em Basic Concepts of Enriched Category Theory}, London
  Mathematical Society lecture note series, 64, Cambridge University Press,
  1982.

\bibitem{Kontsevich09}
{\sc M.~Kontsevich}, {\em {The symplectic geometry of homological algebra}}.
\newblock {Available at http://www.ihes.fr/~maxim/TEXTS/SymplecticAT2009.pdf}.

\bibitem{Loday92}
{\sc J.-L. {Loday}}, {\em Cyclic homology}, vol.~301 of Grundlehren der
  Mathematischen Wissenschaften [Fundamental Principles of Mathematical
  Sciences], Springer-Verlag, Berlin, 1992.

\bibitem{Loday11}
\leavevmode\vrule height 2pt depth -1.6pt width 23pt, {\em {Free loop space and
  homology}}, ArXiv e-prints,  (2011), 1110.0405.

\bibitem{Lurie11}
{\sc J.~Lurie}, {\em {Higher Algebra}}.
\newblock Available at www.math.harvard.edu/~lurie/papers/higheralgebra.pdf,
  Apr. 2011.

\bibitem{Menichi01}
{\sc L.~Menichi}, {\em The cohomology ring of free loop spaces}, Homology
  Homotopy Appl., 3 (2001), pp.~193--224.

\bibitem{Pantev11}
{\sc T.~Pantev, B.~To\"{e}n, and M.~Vaquie}, {\em {Quantization and Derived
  Moduli Spaces I: Shifted Symplectic Structures}}, ArXiv e-prints,  (2011),
  1111.3209v1.

\bibitem{Pridham13}
{\sc J.~P. {Pridham}}, {\em {Tannaka duality for enhanced triangulated
  categories}}, ArXiv e-prints,  (2013), 1309.0637.

\bibitem{Shulman08}
{\sc M.~Shulman}, {\em {Parametrized spaces model locally constant homotopy
  sheaves}}, Topology Appl., 155 (2008), pp.~412--432.

\bibitem{Simpson96a}
{\sc C.~Simpson}, {\em {Algebraic (geometric) $ n $-stacks}}, ArXiv e-prints,
  (1996), math/9609014v1.

\bibitem{Simpson05}
\leavevmode\vrule height 2pt depth -1.6pt width 23pt, {\em {Geometricity of the
  Hodge filtration on the $\infty$-stack of perfect complexes over $X_{DR}$}},
  ArXiv e-prints,  (2005), math/0510269.

\bibitem{Tabuada07}
{\sc G.~Tabuada}, {\em {Theorie homotopique des DG-categories}}, PhD thesis,
  Universit\'{e} Paris Diderot - Paris 7, 2007, arXiv:0710.4303v1.

\bibitem{Tabuada10}
\leavevmode\vrule height 2pt depth -1.6pt width 23pt, {\em {Differential graded
  versus simplicial categories}}, Topology Appl., 157 (2010), pp.~563--593,
  0711.3845.

\bibitem{Toen08b}
{\sc B.~To\"{e}n}, {\em {Vers une interpr\'{e}tation Galoisienne de la
  th\'{e}orie de l'homotopie}}, Cahiers de topologie et geometrie
  differentielle categoriques, 43 (2002), pp.~257--312.

\bibitem{Toen06}
\leavevmode\vrule height 2pt depth -1.6pt width 23pt, {\em {The homotopy theory
  of dg-categories and derived Morita theory}}, Invent. Math., 167 (2006),
  pp.~615--667.

\bibitem{Toen07a}
\leavevmode\vrule height 2pt depth -1.6pt width 23pt, {\em Lectures on
  dg-categories}, in Topics in algebraic and topological {$K$}-theory,
  vol.~2008 of Lecture Notes in Math., Springer, Berlin, 2011, pp.~243--302.

\bibitem{Toen07}
{\sc B.~To\"{e}n and M.~Vaquie}, {\em {Moduli of objects in dg-categories}},
  Annales Scientifiques de l'\'{E}cole Normale Sup\'{e}rieure, 40 (2007),
  pp.~387--444.

\end{thebibliography}

\end{document}